\documentclass[11pt,twoside]{article}
\usepackage{amsthm}
\usepackage{latexsym}
\usepackage{amssymb,amsbsy,amsmath,amsfonts,amssymb,amscd}
\usepackage{color}
\usepackage{subcaption}
\usepackage{float,url}
\usepackage{cancel}
\usepackage{graphicx}
\usepackage[colorlinks,linkcolor=red,citecolor=blue]{hyperref}
\usepackage{enumitem}

\numberwithin{equation}{section}

\newcommand{\commentout}[1]{}

\newcommand{\R}{\mathbb{R}}
\newcommand{\N}{\mathbb{N}}

\newcommand {\eps}  {\varepsilon}

\newcommand {\sgn} { {\rm sgn} }

\newcommand {\f}   {\frac}
\newcommand {\p}   {\partial}
\newcommand{\beq}{\begin{equation}}
\newcommand{\eeq}{\end{equation}}
\newtheorem{theorem}{Theorem}[section]
\newtheorem{lemma}[theorem]{Lemma}

\newtheorem{remark}[theorem]{Remark}
\newtheorem{proposition}[theorem]{Proposition}
\newtheorem{corollary}[theorem]{Corollary}

\title{A stiff limit of non-homogeneous conservation laws\\ for crowd motion modeling}

\author{Nicolas Masson \thanks{Université Paris-Saclay, Laboratoire de Mathématiques d'Orsay, 91400 Orsay}\and
Beno\^ \i t Perthame  \thanks{Sorbonne Université, Universit\'{e} de Paris Cit{\'e}, CNRS, Inria, Laboratoire Jacques-Louis Lions, 75005 Paris}
\and Filippo Santambrogio \thanks{Universite Claude Bernard Lyon 1, ICJ UMR5208, CNRS, Ecole Centrale de Lyon, INSA Lyon, Universit\'e Jean Monnet, 69622
Villeurbanne, France}
}
\date{\today}

\begin{document}
\maketitle
\pagestyle{plain}
\pagenumbering{arabic}

\begin{abstract} 
We propose a new approach for crowd motion models where the density constraint can only slow down the motion of each agent, with no effect on those agents who are not in a saturated area or who have no saturated density ``in front'' of them. This is done by means of a limit of conservation laws inspired by the equations used for traffic as in \textit{Follow the leader}-type models. We study the asymptotics of the solutions of these conservation laws in a certain asymptotic regime, and obtain a PDE at the limit of a whole new type. One of the main goals of the paper is to prove uniform BV estimates on the density, and thus strong compactness to prove the existence of solutions to this limit equation. We also discuss the qualitative behavior of solutions, provide numerical illustrations both in dimension $1$ and $2$, and establish the new entropy inequalities associated with this limit equation.
\end{abstract} 
\vskip .7cm

\noindent{\makebox[1in]\hrulefill}\newline
2020 \textit{Mathematics Subject Classification.} 35L02, 35B25, 35R35, 91D25
\newline\textit{Keywords and phrases.} Conservation law; Crowd motion; Singular perturbation; Free boundary;
%
\section{Introduction}

The mathematical modeling of crowd motion is a well-established field encompassing a wide range of approaches. A comprehensive overview of the existing models can be found in \cite{helbing_pedestrian_2010}. A common classification distinguishes models according to the scale of description—\textit{microscopic} or \textit{macroscopic}—and the manner in which congestion constraints are incorporated—\textit{hard} or \textit{soft}. In particle-based descriptions, where individual agents are explicitly represented, the approach is referred to as \textit{microscopic}. In contrast, when the crowd is described in terms of a continuous density field, the model is termed \textit{macroscopic}. Concerning congestion, a constraint is said to be \textit{hard} when it induces a qualitative change in the dynamics upon being reached, typically enforcing a strict upper bound on the density. It is referred to as \textit{soft} when its influence is introduced progressively, with the governing equations smoothly penalizing high-density regions as the system approaches the congestion threshold.

Among macroscopic models with a hard congestion constraint, our starting point is the one proposed by Maury, Roudneff-Chupin and the third author in \cite{roudneff_modelisation_nodate,maury_macroscopic_2010}. In this model, the crowd is represented by a density $\rho$ that remains below a threshold normalized to $1$, and individuals aim to move according to a desired velocity field $U$. Since $U$ may favor concentration (i.e. $\nabla\cdot U<0$), the density $\rho$ is advected by an effective velocity field $u$, defined as the $L^2$-projection of $U$ onto the cone $C_\rho$ of admissible velocity fields, namely those whose divergence is nonnegative on the saturated region $\{\rho=1\}$. The macroscopic model with hard constraint reads

\begin{align*}
\left\{\begin{array}{l}
        \partial_t \rho + \nabla \cdot (\rho u) = 0,  \\
        u = P_{C_\rho} U,
    \end{array}
    \right.
\end{align*}

In the case when $U = - \nabla D$, existence of solutions was established in \cite{maury_macroscopic_2010} using an approximation scheme known as the JKO scheme, introduced by Jordan, Kinderlehrer, and Otto in \cite{jordan_variational_1998}. In this setting, the analysis exploits the underlying gradient-flow structure of the PDE.

A shortcoming of this formulation is that it allows some individuals to move faster than their desired velocity, which contradicts certain empirical observations on crowd motion. In order to overcome this issue, and following an idea introduced in \cite{reda_crowd_nodate}, we study a variant of the previous model by introducing a \textit{hierarchy} between individuals: where the density is strictly less than one, individuals move freely; where $\rho = 1$, individuals closest to the exit move freely, while those behind adapt their velocity. In this way, we obtain a macroscopic model with hard congestion constraint of a new kind, which formally solves a PDE system that can be formulated as a free boundary problem 

\begin{figure}[t]
\center
\includegraphics[scale = 0.6]{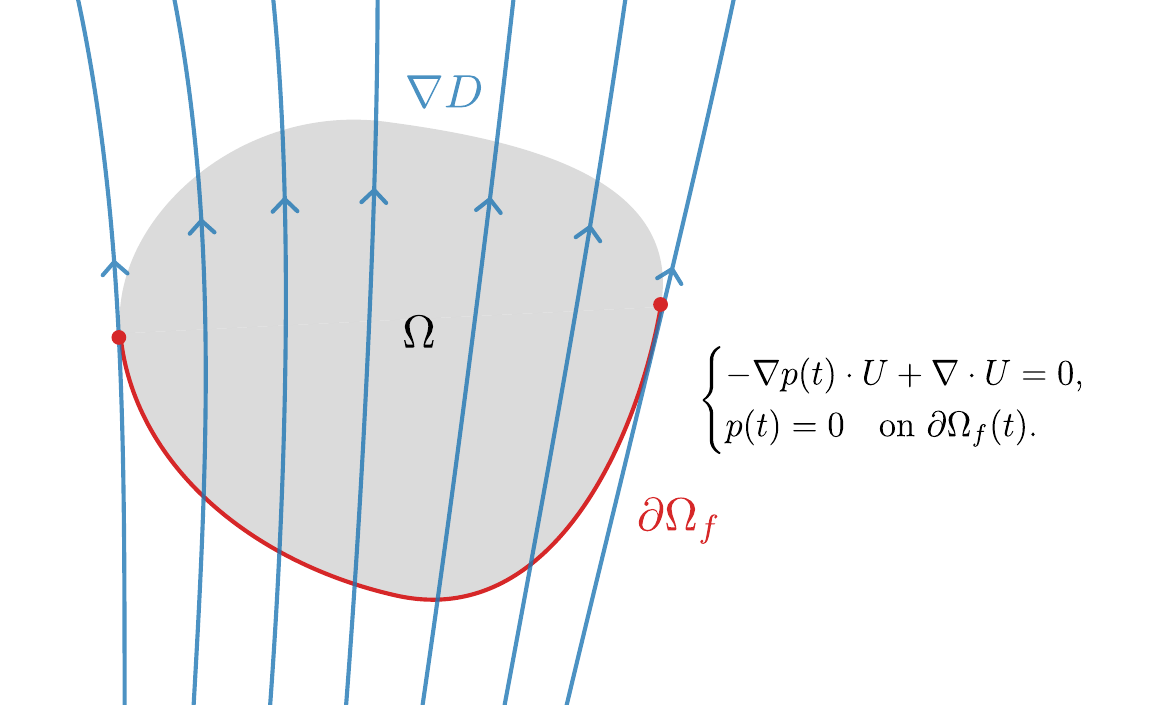}
\caption{Pressure at time $t$ satisfies a stationary transport equation.}
\label{fig:transport_pressure}
\end{figure}

\begin{equation}
\label{eq:FBF}
\left\{\begin{aligned}
 \partial_t \rho + \nabla \cdot (\rho (1-p)U) &= 0, \\
 p(1-\rho) &= 0, \quad 0 \le p \le 1,\\
  \nabla \cdot ((1-p)U) &= 0 \text{ on } \Omega(t), \\
  p &= 0 \text{ on } \partial \Omega_f(t),
\end{aligned}\right.
\end{equation} 
where $p$ plays the role of a pressure, 
\begin{equation} \label{def:Omega}
\Omega(t) = \mathring{\Sigma}(t)\text{ with } \Sigma(t) = \{x \in \mathbb{R}^d \,:\, \rho(t,x) = 1\}
\end{equation}
is the interior of the saturated region,  and $\partial \Omega_f(t)$ denotes the \textit{frontal part} of the boundary $\partial \Omega(t)$ - that is, the points of $\partial \Omega(t)$ through which individuals would exit $\Omega(t)$ if they were moving according to $U$ (see Fig. \ref{fig:transport_pressure}).

To better visualize the expected behavior, we consider a simple one-dimensional example. Take the initial condition
\[
\rho^0 = \mathbf{1}_{[0,x^+(0)]},
\]
with a desired velocity field that is positive and decreasing. Then we expect the solution of be (see Figure~\ref{fig:FBF1d})

\begin{figure}[t]
\centering
\includegraphics[width = \linewidth]{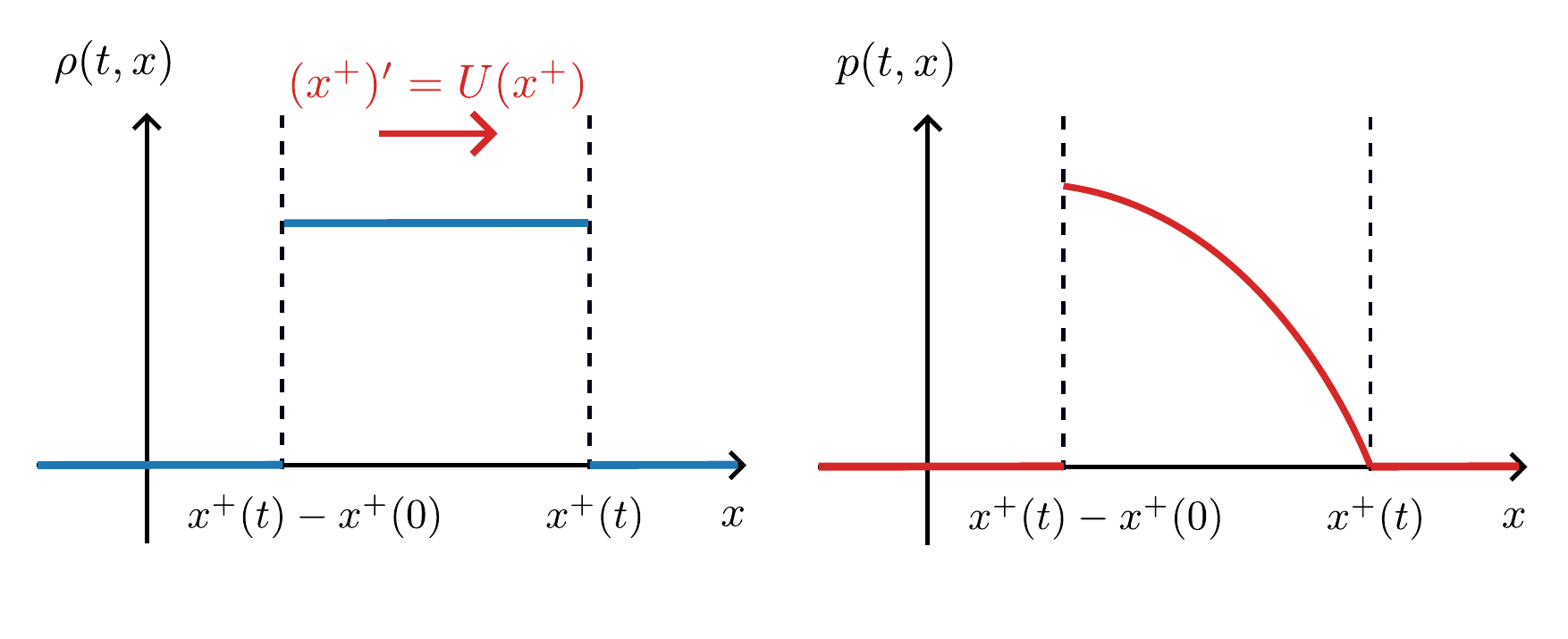}
\caption{Expected solution in a simple 1D case}
\label{fig:FBF1d}
\end{figure}

\[
\rho(t,x) = \begin{cases}
    1 \quad \text{for } x^+(t) -x^+(0) <x<x^+(t) \\
    0 \quad \text{elsewhere},
\end{cases}
\]
and 
\[
p(t,x) = \begin{cases}
1-\f{U(x^+(t))}{U(x)} \quad \text{for } x^+(t) -x^+(0) <x<x^+(t)\\
0 \quad \text{elsewhere},
\end{cases}
\]
where

\[
(x^+)'(t) = U(x^+(t)).
\]
The pressure $p$ simply corrects $U$ so that the effective velocity $(1-p) U$ is constant inside the saturated region  and equal to the desired velocity of the individual located at the front. In other words, the behavior is the following: the first person in the queue moves at its desired speed, while those behind would like to move faster, but congestion prevents them from doing so. They therefore adapt and move at the same speed as the person at the front.

We now consider a more interesting example, where the saturated region has initially two connected components (see Fig.~\ref{fig:FBF1D_AHP}). The initial density is given by

\begin{figure}
    \centering
    \includegraphics[width=\linewidth]{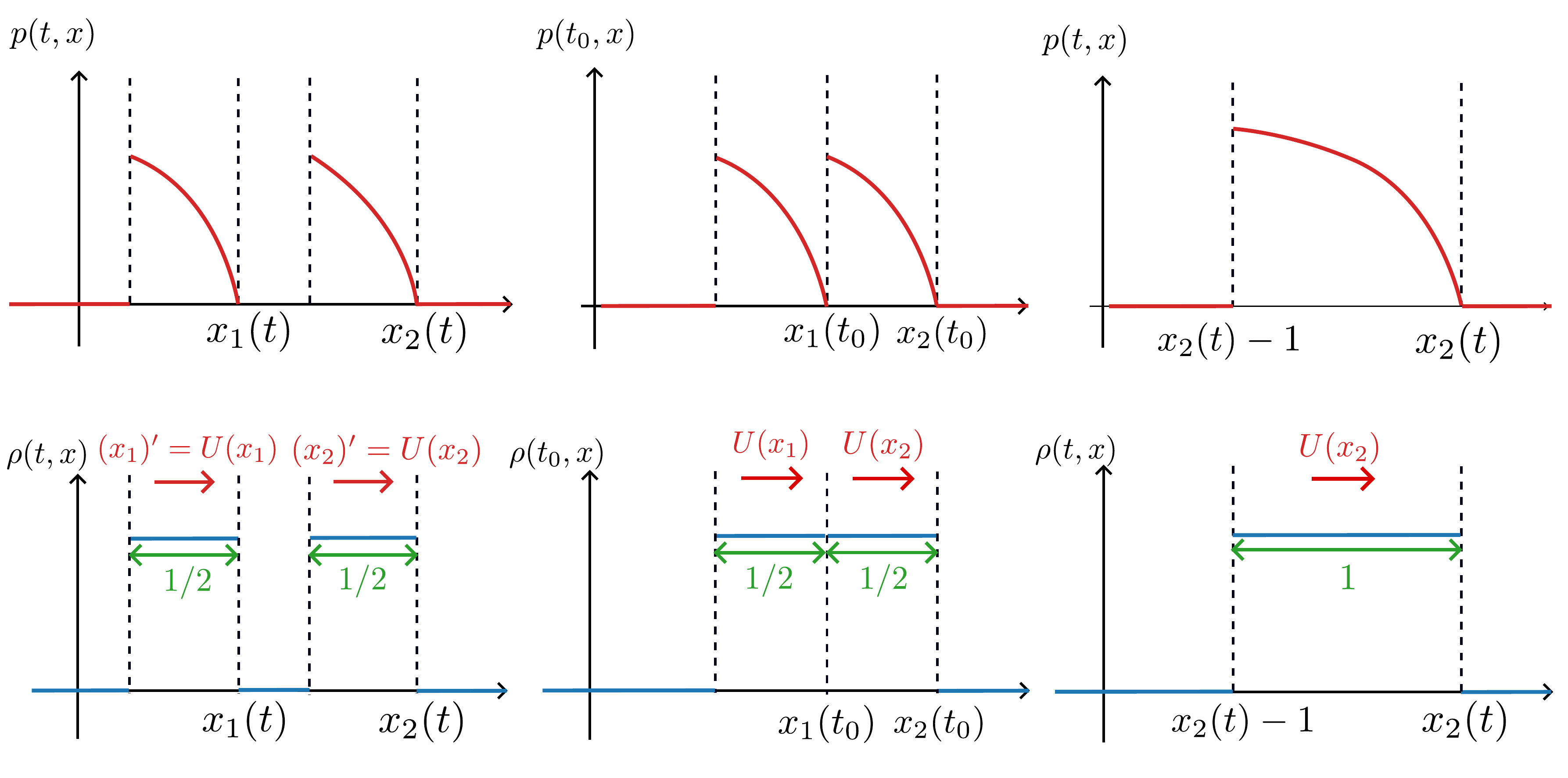}
    \caption{Density (second row) and pressure (first row) when two saturated areas collide at $t_0$.}
    \label{fig:FBF1D_AHP}
\end{figure}

\[
\rho^0 = \mathbf{1}_{[-1,-1/2]}+\mathbf{1}_{[1/2, 1]}.
\]
Then, the expected solution is

\[
\rho(t,x) = \mathbf{1}_{[x_1(t)-1/2,x_1(t)]} +\mathbf{1}_{[x_2(t)-1/2, x_2(t)]},
\]
where

\[
\begin{cases}
    x_1'(t)=U(x_1(t)),
    \\
    x_1(0) = -1/2,
\end{cases}
\qquad \text{and}\qquad \qquad 
\begin{cases}
    x_2'(t)=U(x_2(t)),\\
    x_2(0) = 1/2.
\end{cases}
\]
Then, provided that $U$ is Lipschitz continuous, we know from the Cauchy-Lipschitz theorem that $x_1(t)<x_2(t)$ for all~$t$. However, if $U$ is contracting enough (for instance, if $U' \le \alpha <0$), then $x_1(t)$ reaches $x_2(t)-1/2$ in finite time $t_0$ (see Figure~\ref{fig:FBF1D_AHP}). In other words, the person in the front of the left connected component of the saturated area reaches the person in the back of the right connected component. At that time, we expect a singularity in the behavior : the person at $x_1$, after having priority for a certain period of time, suddenly loses priority over the person at $x_2$. It means that his velocity suddenly changes from $U(x_1)$ to $U(x_2)$. The singularity is even more dramatic for the pressure : before the contact-time we have

\[
p(t,x) = \left(1-\f{U(x_1(t))}{U(x)}\right)\mathbf{1}_{[x_1-1/2, x_1]} + \left(1-\f{U(x_2(t))}{U(x)}\right)\mathbf{1}_{[x_2-1/2 , x_2]},
\]
but as soon as $x_1$ reaches $x_2 -1/2$, it becomes

\[
p(t,x) = \left(1-\f{U(x_2(t))}{U(x)}\right)\mathbf{1}_{[x_2(t)-1,x_1(t)]}.
\]

Yet, keeping these difficulties in mind, \eqref{eq:FBF} corresponds - at least formally - in several dimensions to a model where:

\begin{itemize}[label=$\bullet$]

\item The density is advected by the effective velocity field $u = (1-p)U$, obtained by multiplying the desired velocity $U$ by a scalar correction factor.
\item The effective speed is less than or equal to the desired speed, so the main drawback of the model in \cite{maury_macroscopic_2010} is resolved.
\item The correction factor $p$, which we call a pressure (although it should not be interpreted as a physical pressure), is active only in the saturated region.
\item The pressure satisfies a transport equation in $\Omega(t)$, together with an appropriate boundary condition on $\partial \Omega_f(t)$.
\item Overall, the model satisfies the guiding principle: where $\rho < 1$, the density is advected by the desired velocity $U$; where $\rho = 1$, it is advected by the effective velocity $(1-p)U$. Moreover, individuals located on the frontal boundary $\partial \Omega_f(t)$ still move at their desired speed (since $p=0$ there), while those behind adapt according to a transport equation for the pressure, whose characteristics are the streamlines of $-U$.

\end{itemize}

This system (\ref{eq:FBF}) is composed of three different equations, which we denote as follows for future reference : first the continuity equation 

\begin{equation}
\label{eq:continuity}
\partial_t \rho + \nabla \cdot (\rho(1-p)U) = 0, \qquad \text{(continuity equation),}
\end{equation}
then the law of state between $\rho$ and $p$

\begin{equation}
\label{eq:state}
 p(1-\rho) = 0, \quad 0 \le p \le 1,  \qquad \qquad \text{(law of state)},
\end{equation}
and finally the complementarity equation satisfied by $p$ on $\Omega(t)$ - which can be seen  as a transport equation 

\begin{equation}
\label{eq:complementarity}
\nabla \cdot ((1-p)U) = 0 \text{ on } \Omega(t), \qquad \text{(complementarity equation)},
\end{equation}
with its boundary condition 

\begin{equation}
\label{eq:boundary}
p = 0 \text{ on } \partial \Omega_f(t).
\end{equation}

This PDE system appears to reflect our modeling objectives. However, as stated, there is no guarantee that it is wellposed. Indeed, the objects we introduce — such as the pressure \(p\), the saturated zone \(\Omega\), and the frontal boundary \(\partial \Omega_f\) — may have very low regularity, making them ill-suited for giving a meaning to the PDE system, even in a weak or distributional sense.

Following an idea suggested by Bertrand Maury, we prove the existence of solutions to (\ref{eq:FBF}) by approximating the equation with a sequence of scalar conservation laws and showing that the corresponding solutions converge to a solution of (\ref{eq:FBF}). For readers who may be unfamiliar with scalar conservation laws, we remind that they are PDEs of the form

\begin{align}
\label{eq:conservation_generic}
\partial_t \rho + \nabla \cdot (F(\rho,x)) = 0,
\end{align}
and have been extensively studied — see \cite{dafermos_constantine_m_hyperbolic_2016} or \cite{serre_systemes_1996} for comprehensive presentations of the theory.

For this type of first order evolutionary PDEs, characteristics generically intersect, thus generating discontinuities (shocks), which makes it necessary to consider weak formulations of the equation.
However, weak formulations alone do not ensure uniqueness. Uniqueness is recovered by requiring that $\rho$ satisfies a family of differential inequalities known as entropy inequalities. In his seminal paper \cite{kruzkov_first_1970}, Kruzhkov proved existence and uniqueness of entropy solutions — that is, weak solutions satisfying the entropy inequalities — for conservation laws under mild regularity assumptions on the flux function~$F$. An alternative approach, which additionally yields Sobolev regularity of the solutions in all dimensions, is the kinetic formulation \cite{MR2064166}. The particular case at hand (see \eqref{eq:CL} below) is studied in \cite{PeDa2009} for a less regular potential $U=\nabla V$.

One of the best-known scalar conservation law in applied mathematics arises from the \textit{Lighthill-Whitham-Richards} (LWR) traffic model introduced in \cite{lighthill_kinematic_1997} and \cite{richards_shock_1956}, which reads

\begin{equation}
\left\{\begin{aligned}
 &\partial_t \rho + \partial_x (F(\rho)) = 0, \\
 &F(\rho) = \rho(1-\rho)v_{max}.
\end{aligned}\right.
\end{equation}
This model expresses that the vehicle density \(\rho\) is advected with velocity \(v = (1-\rho)v_{max}\), equal to the speed limit \(v_{max}\) modulated by a correction factor that reduces the speed as the density approaches the threshold \(\rho = 1\). It can be shown (see, for instance, \cite{rossi_justification_2014} or \cite{francesco_rigorous_2015}) that this equation is the macroscopic counterpart of \textit{Follow-the-Leader} (FTL) models, in which each vehicle aims to reach the speed limit but adapts its velocity according to the distance to the vehicle directly ahead. Mathematically, FTL models prescribe that the position \(x_i\) of the \(i\)-th vehicle satisfies an ODE of the form
\[
x_i' = \varphi_\delta(x_{i+1}-x_i),
\]
where \(\delta\) is a stiffness parameter accounting for driver's responsiveness (see Fig. \ref{fig:FTL}).

This framework provides a natural starting point for our model and fits well within the classical conservation law theory. However:

\begin{itemize}[label=$\bullet$]

\item In the LWR model, congestion is handled in a \textit{soft} manner: the velocity is smoothly penalized as the density approaches \(1\). While this is reasonable for vehicular traffic — where drivers anticipate congestion and adjust their speed accordingly — it may be less appropriate for the type of saturation effects we aim to model.

\item The LWR model is one-dimensional, and much of the corresponding conservation law theory is developed in that setting, whereas a two-dimensional model would be more appropriate for pedestrian dynamics.

\item Many classical results assume that the flux \(F\) does not depend explicitly on space. In contrast, we would like to prescribe a desired velocity field that depends on the individual's position relative to the exit.

\end{itemize}

Let us explain the approach, and the difficulties lying ahead. To approximate the system (\ref{eq:FBF}) corresponding to our target model, we consider the entropic solution of the LWR-type equation

\begin{equation}
\label{eq:CL}
\left\{\begin{aligned}
 &\partial_t \rho + \nabla \cdot (F_k(\rho)U) = 0, \\
 &F_k(\rho) = \rho(1-\rho^k),\\
 &0 \le \rho(t=0) \le 1, \quad \int\rho(t=0) = 1
\end{aligned}\right.
\end{equation}
and pass to the limit \(k \to +\infty\). In other words, we study the macroscopic counterpart of a model in which each agent adapts its speed according to the distance to the one in front, with a shorter characteristic interaction distance as \(k\) increases (see Fig.~\ref{fig:stiffness}).

Actually, for technical purposes, and following ideas that are very standard in the field of conservation laws, we first consider the parabolic regularization of \eqref{eq:CL} 

\begin{equation}
\label{eq:RegPara}
\left\{\begin{aligned}
&\partial_t \rho + \nabla \cdot (F_k(\rho)U) = \eps \Delta \rho, \\
&0 \leq \rho(t=0) \leq 1, \, \int \rho(t=0)  =1.
\end{aligned}\right.
\end{equation}
Doing so, we do not have to worry about the regularity of the solutions. We consider $\rho_{k,\eps}$ the unique smooth solution of \eqref{eq:RegPara}, and define $p_{k,\eps} = \rho_{k,\eps}^k$. We prove that in, topologies to be precised further,

\[
(\rho_{k,\eps},p_{k,\eps}) {\longrightarrow} (\rho,p),  \qquad \text{as} \quad k \to + \infty, \; \eps  \to 0.
\]
Due to the nonlinearities and the singular limit,  we need  specific compactness properties to prove that $(\rho,p)$ satisfies \eqref{eq:FBF} in some sense. Also, to ensure that \((\rho,p)\) satisfies the continuity equation (\ref{eq:continuity}) together with the law of state (\ref{eq:state}), we need to pass to the limit in a product term, which requires strong compactness for at least one of the sequences \(\rho_k\) or \(p_k\).

The complementarity equation (\ref{eq:complementarity}) satisfied by \(p\) is formally contained in the continuity equation (\ref{eq:continuity}) whenever \(\Omega(t)\) in \eqref {def:Omega} is smooth. However, the boundary condition (\ref{eq:boundary}) forces us to carefully address the regularity of \(p\). As shown in Figure.~\ref{fig:FBF1d}, we  expect discontinuities, in space and time, for the pressure $p$. 
We can prove that $p$ has some \textit{directional regularity} in space : in the direction of $U$, it can jump \textit{upwards}, but not \textit{downwards}. Since $p$ vanishes outside of $\Omega(t)$, this gives a meaning to the boundary condition \eqref{eq:boundary}.

\begin{figure}[t]
\centering
\begin{subfigure}{0.45\textwidth}
\centering
\includegraphics[width = \linewidth]{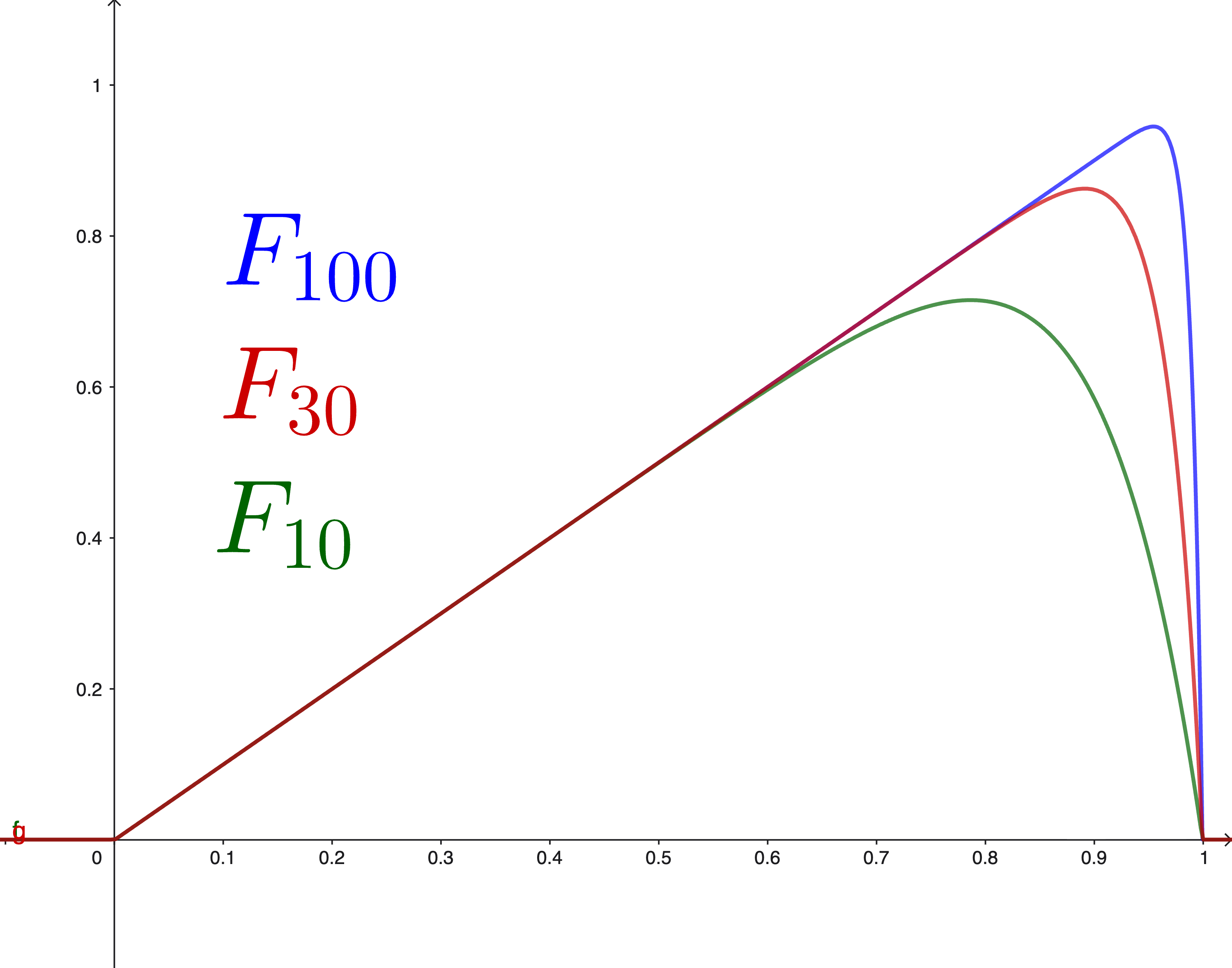}
\caption{Flux function $F_k$}
\label{fig:flux}
\end{subfigure}
\begin{subfigure}{0.45\textwidth}
\centering
\includegraphics[width = \linewidth]{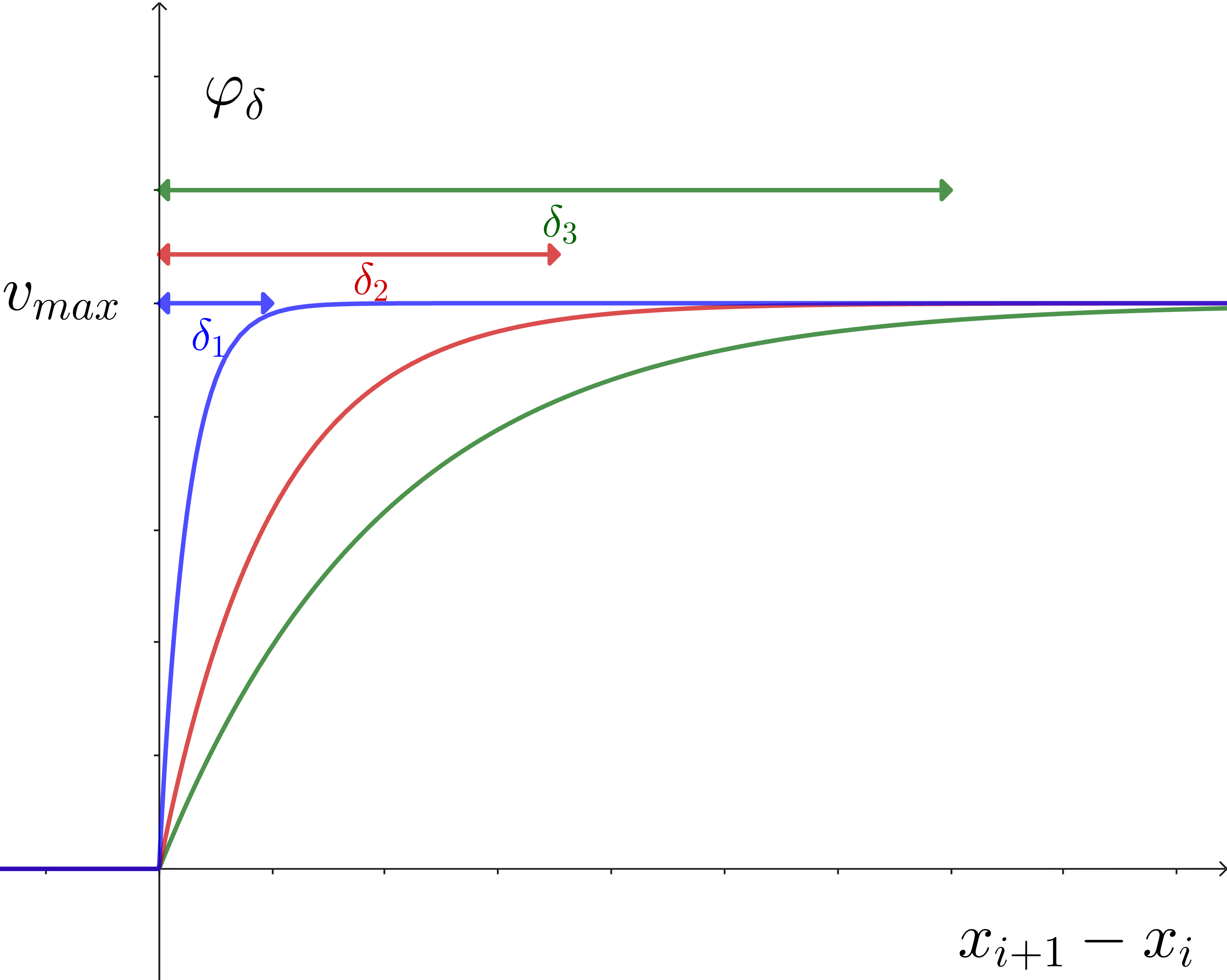}
\caption{FTL}
\label{fig:FTL}
\end{subfigure}
\caption{Corresponding approached models in the macroscopic and microscopic setting}
\label{fig:stiffness}
\end{figure}

Overall, the main goal of the paper is to study a singular limit of scalar conservation laws as a parameter tends to infinity. This type of asymptotics in the so-called \textit{incompressible limit}, has been investigated for instance in \cite{david_uniform_2024,perthame_hele-shaw_2014,MR4952923} for hard constraints models derived from the porous media equation. The main difference with the equations considered in these works is that the pressure appears as a gradient, and methods have been developed to prove strong compactness properties that seem, at present, out of reach in our setting.
\\

\textbf{Organization of the paper}. In section~\ref{sec:main} we show BV estimates for the solution $\rho$ of \eqref{eq:CL} that are uniform in time and $k$. These estimates on $\rho$ provide strong compactness, which is sufficient to establish (\ref{eq:continuity}) and (\ref{eq:state}).
The same analysis also provides very partial, but uniform in $k$, BV estimates in space for the pressure $p$; because the equation on $p$ has no time derivative, these estimates do not give strong convergence of the pressure in $L^1$. Then, we explain how to handle (\ref{eq:boundary}) using only weak convergence arguments. In section~\ref{sec:entropies}, we detail some remarks about the qualitative behavior of the solutions, and establish the entropy inequalities for \eqref{eq:CL}, before passing these inequalities to the limit $k \to + \infty$ and discuss the issue of uniqueness in \eqref{eq:FBF}.

\section{Main results}
\label{sec:main}

We begin by introducing some notations used throughout the paper, and state some preliminary results from parabolic theory.

\subsection{Notations and preliminary results}

In the rest of the paper, we use the following notations and assumptions:

\begin{itemize}[label=$\bullet$]

\item $d \in \N^*$ the dimension,
\item $k > 0$ the stiffness parameter,
\item $U\in W^{2,\infty}(\R^d),$
\item $T > 0$, and $Q_T = [0,T] \times \R^d$,
\item $F_k(\rho) = \rho(1-\rho^k)$ the flux function,
\item $p_k = \rho^k$ the pressure,
\item $\Sigma(t) = \{x\in \R^d, \quad\rho(t,x) = 1\}$ the saturated zone,
\item $\Omega(t) = \overset{\circ}{\Sigma}(t)$ its interior,
\item $\p \Omega_f(t) = \left\{ x \in \p \Omega(t), \; n(x)\cdot U(x) > 0 \right \}$, where $n(x)$ is the outward normal vector in $x \in \p \Omega(t)$,
\item $B_R(\R^d)$ the ball of center $0$ and radius $R >0$ in $\R^d$,
\item $\eta_R \geq 0$ a cutoff function, equal to $1$ on $B_R(\R^d)$, with exponential decay at $+ \infty$, and such that $\| \nabla \eta_R \| \le \frac{C}{R}\eta_R$ and $| \Delta \eta_R| \le \frac{C}{R^2}\eta_R$,
\item the $\sgn$ the function
\begin{align*}
\sgn(\rho) = \left\{
\begin{array}{ll}
1 & \text{for } \rho > 0, \\
0 & \text{for } \rho = 0, \\
-1 \; & \text{for } \rho < 0,
\end{array}
\right.
\end{align*}
\item $\rho^0 \in L^1(\R^d) \cap L^\infty(\R^d)\cap BV(\R^d)$ the initial condition.

\end{itemize}
We consider the following parabolic PDE on $Q_T$ :

\begin{equation}
\label{eq:RegParaBis}
\left\{\begin{aligned}
&\partial_t \rho + \nabla \cdot (F_k(\rho)U) = \eps \Delta \rho, \\
&0 \leq \rho(t=0) = \rho^0 \leq 1, \, \int \rho^0  =1.
\end{aligned}\right.
\end{equation}

We state without proof the following preliminary result that comes from parabolic regularity theory. For an extensive presentation of the framework and the main theorems, we refer to \cite{ladyzhenskaia_linear_1968, lieberman_second_2005}. For statements more tailored for parabolic perturbation of conservation laws, see \cite{malek_weak_1996}. The results presented in the latter are proven for a homogeneous flux, but these can be easily adapted to the case of a flux with Lipschitz dependence in space.

\begin{lemma}
\label{lm:Prel}
The PDE (\ref{eq:RegParaBis}) has a unique solution $\rho$ in $L^\infty(Q_T)\cap L^1(Q_T)$, smooth on $(0,T] \times \R^d$. Moreover, this equation enjoys a standard $L^1$-contraction property, together with a comparison principle. In particular, we have, for every $t \in [0,T]$
\begin{align*}
0 \le \rho(t) &\le 1, \\
\| \rho(t) \|_{L^1(\R^d)} &= \| \rho^0 \|_{L^1(\R^d)} = 1.
\end{align*}
We also have from parabolic regularity theory the following regularity, for all $t>0$, 
\begin{align*}
\rho(t) \in W^{3,\infty}(\R^d)\cap W^{3,1}(\R^d).
\end{align*}
\end{lemma}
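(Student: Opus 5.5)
The plan is to treat \eqref{eq:RegParaBis} as a semilinear heat equation with a globally Lipschitz nonlinearity. The first step is a truncation. Set $\tilde F_k(\rho)=F_k(\min(\max(\rho,0),1))$, which is bounded and Lipschitz on $\R$. Solve
\[
\partial_t\rho+\nabla\cdot(\tilde F_k(\rho)U)=\eps\Delta\rho
\]
through the Duhamel formula
\[
\rho(t)=G_\eps(t)*\rho^0-\int_0^t\nabla G_\eps(t-s)*\big(\tilde F_k(\rho(s))U\big)\,ds,
\]
where $G_\eps$ is the heat kernel. Since $\|\nabla G_\eps(t)\|_{L^1}\le C(\eps t)^{-1/2}$ is integrable at $0$, the right-hand side defines a contraction on $C([0,T_0];L^1\cap L^\infty)$ for $T_0$ small. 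The existence time depends only on $\eps$, $\|U\|_{W^{1,\infty}}$ and $\mathrm{Lip}(\tilde F_k)$, not on the data, so iterating gives a unique mild solution on $[0,T]$. Uniqueness in $L^\infty\cap L^1$ for the original equation will follow once we know the solution stays in $[0,1]$, because there $\tilde F_k=F_k$.

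\textbf{Smoothness.} I would bootstrap in the Duhamel formula. Differentiating the fixed-point map in $x$ gives $\rho\in C((0,T];W^{1,p})$, then $W^{2,p}$, and so on, using $U\in W^{2,\infty}$ and the smoothness of $F_k$ on $(0,1]$. Interior parabolic Schauder estimates, as in \cite{ladyzhenskaia_linear_1968,lieberman_second_2005}, give classical smoothness on $(0,T]\times\R^d$. For the stated $W^{3,\infty}\cap W^{3,1}$ bound, the limiting factor is $U\in W^{2,\infty}$: the flux term $\nabla\cdot(F_k(\rho)U)$ then involves at most $D^2U$ at the $W^{1,\cdot}$ level, and one more heat smoothing yields three derivatives. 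The main technical obstacle I expect here is regularity of $F_k$ near $\rho=0$ when $k$ is non-integer: $\rho^{k+1}$ is only $C^{k+1}$. For $k\ge 2$ this is enough. For smaller $k$, one should combine the bound $\rho\ge0$ with the fact that only a bounded number of derivatives is required.

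\textbf{Comparison, $L^1$-contraction and the bounds.} For two solutions $\rho_1,\rho_2$, multiply the difference equation by a smooth approximation of $\sgn(\rho_1-\rho_2)$ and by the cutoff $\eta_R$, then integrate. The diffusion term has a good sign. The flux term gives
\[
\int(\tilde F_k(\rho_1)-\tilde F_k(\rho_2))U\cdot\nabla\big(\sgn_\delta(\rho_1-\rho_2)\big)\eta_R .
\]
This vanishes as $\delta\to0$ because $|\tilde F_k(\rho_1)-\tilde F_k(\rho_2)|\le L|\rho_1-\rho_2|$ and $|\rho_1-\rho_2|\,\sgn_\delta'(\rho_1-\rho_2)$ is supported where $|\rho_1-\rho_2|\le\delta$. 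The remaining terms carrying $\nabla\eta_R$ are $O(1/R)$. Letting $R\to\infty$ gives $\|\rho_1(t)-\rho_2(t)\|_{L^1}\le\|\rho_1^0-\rho_2^0\|_{L^1}$. Using the positive part instead of $\sgn$ gives the comparison principle.

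The constants $0$ and $1$ are solutions of the truncated equation only up to the source term $\tilde F_k(c)\nabla\cdot U$. Since $\tilde F_k(0)=\tilde F_k(1)=0$, they are in fact exact solutions. They are not in $L^1$, but the localized contraction argument above, run with the positive part, still applies, and it gives $0\le\rho\le1$. Hence $\tilde F_k(\rho)=F_k(\rho)$, so $\rho$ solves \eqref{eq:RegParaBis}. Finally, integrating the equation against $\eta_R$ and letting $R\to\infty$ gives conservation of $\int\rho$. Together with $\rho\ge0$, this yields $\|\rho(t)\|_{L^1}=1$.
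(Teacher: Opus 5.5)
\textbf{Where the proposal breaks.} Your existence, $L^1$-contraction, comparison and mass-conservation arguments are sound up to the minor repairs listed at the end. They are more than the paper offers, since it states this lemma without proof and defers to the parabolic literature. This covers the truncation, the Duhamel contraction based on $\|\nabla G_\eps(t)\|_{L^1}\lesssim(\eps t)^{-1/2}$, the Kru\v{z}kov-type argument with $\sgn_\delta$ and $\eta_R$, and the comparison with the constants $0$ and $1$; these are exact solutions because $F_k(0)=F_k(1)=0$ kills the source $F_k(c)\nabla\cdot U$.

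The step that fails is your claim that, since $\nabla\cdot(F_k(\rho)U)\in W^{1,\infty}$, ``one more heat smoothing yields three derivatives'' in $L^\infty$ and $L^1$. In your Duhamel formula, either all three derivatives land on $F_k(\rho)U$, which requires $D^3U$, or the kernel carries at least two derivatives. But $\|D^2G_\eps(\tau)\|_{L^1}=c/(\eps\tau)$ is not integrable at $\tau=0$: this is the failure of maximal regularity at the endpoints $p=1,\infty$. Schauder theory does not help either, because $D\,\nabla\cdot(F_k(\rho)U)$ contains $F_k(\rho)\nabla(\nabla\cdot U)$, which has no modulus of continuity when $U$ is only $W^{2,\infty}$.

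\textbf{What is actually true.} The picture is elliptic at fixed $t>0$. The function $\p_t\rho$ solves $\p_t v+\nabla\cdot(F_k'(\rho)vU)=\eps\Delta v$, which involves no $D^2U$, so $\p_t\rho(t)\in W^{1,\infty}$. Hence $\eps\Delta\rho(t)=\p_t\rho(t)+\nabla\cdot(F_k(\rho)U)(t)\in W^{1,\infty}$, and $D^3\rho(t)$ is a Calder\'on--Zygmund transform of the merely bounded function $\nabla\Delta\rho(t)$. For $d=1$ this does give $W^{3,\infty}$. For $d\ge2$ it gives only $\rho(t)\in W^{2,\infty}\cap W^{2,1}$, with $D^2\rho(t)\in C^{\alpha}$ ($\alpha<1$) and $\rho(t)\in W^{3,p}$ ($1<p<\infty$). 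The $W^{3,\infty}$ bound should not be expected for a general $U\in W^{2,\infty}$. For the same reason, ``smooth'' can only mean classical, not $C^\infty$, in any dimension.

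\textbf{How to close it.} You have two options. First, you can weaken the conclusion to the regularity above; after a routine approximation, it suffices for the integrations by parts in Lemmas~\ref{Lemma1D} and~\ref{thm:LemmaMultiD}. Second, you can strengthen the hypothesis, e.g.\ $U\in C^{2,\alpha}$, which gives $C^{3,\alpha}$ by Schauder, and reach a general $U$ by mollification. Indeed, $U*\phi_\delta$ has $W^{2,\infty}$ norm at most $\|U\|_{W^{2,\infty}}$ and still satisfies $D^S(U*\phi_\delta)\le-\alpha I_d$. The BV bounds then pass to $\delta\to0$ by $L^1$-stability in $U$ and lower semicontinuity of the total variation.

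\textbf{Smaller repairs.}
\begin{enumerate}
\item Do the bootstrap \emph{after} the comparison step. The truncated flux $\tilde F_k$ has kinks at $0$ and $1$, whereas the strong maximum principle then gives $0<\rho<1$ for $t>0$, where $F_k$ is smooth.
\item Your uniqueness sentence only identifies $\rho$ among solutions of the truncated problem. For another bounded solution $\sigma$ of \eqref{eq:RegParaBis}, rerun the contraction with the Lipschitz constant of $F_k$ on the joint range of $\rho$ and $\sigma$.
\item The map $t\mapsto G_\eps(t)*\rho^0$ is not continuous into $L^\infty$ at $t=0$ for discontinuous $\rho^0\in BV$. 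Set the fixed point in $L^\infty(0,T_0;L^1\cap L^\infty)\cap C([0,T_0];L^1)$ instead.
\end{enumerate}
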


\subsection{BV estimates for the conservation law}

Despite being covered by Lemma~\ref{thm:LemmaMultiD} in several dimensions without additional assumptions, we first state and prove our main result in one dimension for simplicity. We provide detailed arguments using cutoff functions for readers who may be unfamiliar with these techniques; this allows us to omit such details in the multidimensional case and thereby improve clarity.

\begin{lemma}[BV estimates in $1D$]
\label{Lemma1D}
Assume $d = 1$, $\p_xU \le0$, and let $\rho_{k,\eps}$ be the unique solution of (\ref{eq:RegParaBis}), which we denote by $\rho$ for simplicity. Then, the following estimates hold true, uniformly in $k$ and $\eps$, for all $0 \le t \le T$:

\[
\| \p_x \rho(t)\|_{L^1(\R)} \leq C(T)\|\rho^0\|_{BV}, \qquad \int_0^T \int_{\R} \big|\frac{\p \rho^{k+1}}{\p x }(t) \big|\;   \big|{\partial_x} U\big| \leq C(T)\|\rho^0\|_{BV}.
\]
\end{lemma}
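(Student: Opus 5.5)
Set $w = \p_x \rho$ and differentiate \eqref{eq:RegParaBis} in $x$. Since $F_k(\rho)U$ has derivative $F_k'(\rho) w U + F_k(\rho)\p_x U$, we get
\[
\p_t w + \p_x\big(F_k'(\rho) U w\big) + \p_x\big(F_k(\rho)\p_x U\big) = \eps \p_{xx} w .
\]
We will multiply by $\sgn(w)$, or by a smooth convex approximation $\phi_\delta(w)\to|w|$, and by the cutoff $\eta_R$, then integrate over $\R$. The smoothness from Lemma~\ref{lm:Prel} justifies all manipulations for $t>0$.

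The key steps are as follows.
\begin{enumerate}
\item The diffusion term gives $\eps\int \eta_R\,\p_{xx}|w| \le \eps\int |w|\,|\p_{xx}\eta_R| \le \frac{C\eps}{R^2}\int |w|\eta_R$. This uses Kato's inequality, or the $\phi_\delta''\ge0$ sign in the regularized version.
\item For the transport term, $\sgn(w)\p_x(F_k'(\rho)Uw)$ equals $\p_x(F_k'(\rho)U|w|)$ in the renormalized sense. After integrating against $\eta_R$, this leaves a boundary-type term bounded by $\sup|F_k'|\,\|U\|_\infty \frac{C}{R}\int|w|\eta_R$. The issue is that $|F_k'(\rho)| = |1-(k+1)\rho^k|$ is only bounded by $k+1$ on $[0,1]$, so this is not uniform in $k$. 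It vanishes as $R\to\infty$ for fixed $k$, because $\rho(t)\in W^{3,1}$ gives $\int|w|<\infty$. So we first let $R\to\infty$ at fixed $k,\eps$, and only then take bounds.
\item The genuinely new term is the source $\sgn(w)\p_x(F_k(\rho)\p_xU) = \sgn(w)F_k'(\rho)w\,\p_xU + \sgn(w)F_k(\rho)\p_{xx}U$. Write $F_k'(\rho)=1-(k+1)\rho^k$. Then
\[
\sgn(w)F_k'(\rho)w\,\p_x U = |w|\p_xU - (k+1)\rho^k|w|\,\p_xU = |w|\p_xU + |\p_x\rho^{k+1}|\,|\p_xU|,
\]
where the second equality uses $\p_xU\le0$. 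Moving everything to one side, the good term $\int|\p_x \rho^{k+1}||\p_xU|$ appears with a favourable sign on the left. The remaining terms are bounded by $\|\p_xU\|_\infty\int|w|$ and by $\|\p_{xx}U\|_\infty\int F_k(\rho)\le \|\p_{xx}U\|_\infty\int\rho = \|\p_{xx}U\|_\infty$, using $0\le F_k(\rho)\le\rho$ and mass conservation.
\end{enumerate}

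Together these give
\[
\frac{d}{dt}\int|w| + \int|\p_x\rho^{k+1}||\p_xU| \le \|U\|_{W^{2,\infty}}\Big(\int|w|+1\Big).
\]
Gronwall then gives $\|\p_x\rho(t)\|_{L^1}\le e^{Ct}(\|\rho^0\|_{BV}+Ct)$, and integrating in time gives the second estimate. The additive constant is absorbed into $C(T)\|\rho^0\|_{BV}$ because $\|\rho^0\|_{BV}\ge \|\rho^0\|_{L^1}=1$. Near $t=0$ we work on $[\tau,T]$ and let $\tau\to0$, using $L^1$ continuity of $\rho$ and lower semicontinuity of total variation.

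The main obstacle is the non-uniform factor $k+1$ in $F_k'$. In the source term it is handled by the sign of $\p_xU$, which turns the dangerous piece into the dissipated quantity. In the cutoff transport term it must be handled by the order of limits (first $R\to\infty$), which requires the fixed-$k$ integrability of $w$ supplied by Lemma~\ref{lm:Prel}. One must also check carefully that the renormalization $\sgn(w)\p_x(aw)=\p_x(a|w|)$ holds despite the set $\{w=0\}$; this is done via $\phi_\delta$ and letting $\delta\to0$.
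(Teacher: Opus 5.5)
Your proposal is correct and is essentially the paper's proof. You differentiate in $x$, multiply by $\sgn(\partial_x\rho)$ and $\eta_R$, and use $\partial_xU\le0$ with $F_k'(\rho)=1-(k+1)\rho^k$ to turn the dangerous part of the source term into the dissipated quantity $|\partial_x\rho^{k+1}|\,|\partial_xU|$. You then remove the $k$-dependent cutoff term by sending $R\to\infty$ at fixed $k$, and integrate in time. The paper sends $R\to\infty$ only after running Gronwall on $\int|\partial_x\rho|\eta_R$, using $|\partial_x\eta_R|\le\frac{C}{R}\eta_R$, so it does not need $\partial_x\rho(t)\in L^1$ a priori.

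There is one small slip at $t\to0$, a point the paper itself leaves implicit. Lower semicontinuity of the total variation only gives $|D\rho^0|(\R)\le\liminf_{\tau\to0}\|\partial_x\rho(\tau)\|_{L^1}$, which is the wrong direction for starting Gronwall at $\tau$. The standard fix is to mollify $\rho^0$, which preserves $0\le\rho^0\le1$ and unit mass and does not increase the total variation. Then run the estimate from $t=0$ for these smooth data, and pass to the limit using the $L^1$-contraction of Lemma~\ref{lm:Prel} together with lower semicontinuity at the fixed time $t$.
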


\begin{proof}
Differentiating equation \eqref{eq:RegParaBis} with respect to $x$, we obtain
\begin{align*}
\partial_t \partial_x \rho + \partial_x(F'_k(\rho)\, \p_x\rho \, U) + \partial_x(F_k(\rho)\partial_x U) = \eps \Delta \p_x\rho.
\end{align*}
Multiplying by $\sgn(\partial_x \rho)$, and using the standard chain rule in Sobolev spaces, we get
\begin{align}
\label{eq:gradient_1D}
\p_t |\p_x \rho | + \p_x(F_k'(\rho)|\p_x \rho |U) + F_k'(\rho)|\p_x \rho |\p_x U + F_k(\rho)\sgn(\p_x\rho) \p_{xx} U \le \eps \Delta | \p_x\rho |.
\end{align}
Since $ \rho (t)\in W^{2,\infty}(\R^d)$, we can multiply by $\eta_R$ and integrate over the whole space. After integrating by parts the second and the last terms, we obtain
\begin{align*}
\frac{d}{dt} \int_{\R} |\p_x \rho |\eta_R \le \underbrace{\int_{\R^d} F_k'(\rho) |\p_x \rho | U \p_x \eta_R}_{\rm I}  - \underbrace{\int_{\R^d} F_k'(\rho)|\p_x \rho | \p_x U \eta_R}_{\rm II} - \underbrace{\int_{\R^d} F_k(\rho) \sgn(\p_x\rho) \p_{xx}U \eta_R}_{\rm III} \\
+ \underbrace{\eps \int_{\R^d} |\p_x \rho | \p_{xx} \eta_R}_{\rm IV}.
\end{align*}
Since $0 \le \rho \le 1$ and $|F_k(\rho)| \le |\rho|$, and since $\| \rho (t) \|_{L^1(\R)} = \| \rho^0 \|_{L^1(\R)}=1$, the third term on the right-hand side (\rm III) is bounded by a constant independent of $k$, $\eps$, and $R$. Keeping in mind that $\p_x U \le 0$, it only remains to bound $F'_k(\rho) = 1-(k+1)\rho^k$ from above in the second term on the right-hand side (\rm II). For the first term (\rm I), we control $|F_k'(\rho)|$ by $\sup_{\rho \in [0,1]} |F_k'(\rho)|= k$, and use the properties of the cutoff function to estimate $\p_x \eta_R$, as well as $\p_{xx}\eta_R$ in the last term (\rm IV). We finally obtain, with $C$ independent of $k$ and $\eps$,
\begin{align*}
\frac{d}{dt} \int_{\R} |\p_x \rho |\eta_R \le \|\rho^0\|_{L^1}\|\p_{xx}U\|_{L^\infty} + \left(\|\p_xU\|_{L^\infty}+\frac{Ck\|U\|_{L^\infty}}{R}+\frac{C\eps}{R^2}\right) \int_{\R} |\p_x \rho | \eta_R.
\end{align*}
We recall that the Gronwall lemma provides $y(t)\leq \left(y(0)+\frac AB\right)e^{Bt}$ whenever $y$ satisfies $y'(t)\le A+By(t)$. Here we obtain, with $A=\|\rho^0\|_{L^1}\|\p_{xx}U\|_{L^\infty}$ and $B=\|\p_xU\|_{L^\infty}+\frac{Ck\|U\|_{L^\infty}}{R}+\frac{C\eps}{R^2}\geq \|\p_xU\|_{L^\infty}$:
\begin{align*}
\int_{\R} |\p_x \rho |(t) \eta_R \le \left(\|\p_x\rho^0\|_{L^1}+\frac{\|\rho^0\|_{L^1}\|\p_{xx}U\|_{L^\infty}}{\|\p_x U \|_{L^\infty}}\right)\exp \left( \left(C + \frac{Ck}{R}+\frac{C\eps}{R^2}\right)T \right),
\end{align*}
where $\|\p_x \rho^0\|_{L^1}$ is understood as the total variation if $\rho^0$ is of bounded variation. Letting $R \to + \infty$, we finally obtain
\begin{align*}
\left \| \p_x \rho (t) \right\|_{L^1(\R)} \le C(T)\|\rho^0\|_{BV(\R)},
\end{align*}
where $C(T)$ depends only on the $W^{2,\infty}$ norm of $U$.

The use of cutoff functions $\eta_R$ is only meant to justify rigorously the integration of $\p_x(F_k'(\rho)|\p_x\rho|U)$ and $\Delta |\p_x\rho |$ over the whole space, yielding $0$ as expected. We shall omit this argument in the rest of the paper. 

To obtain the estimate on the pressure, we integrate inequality \eqref{eq:gradient_1D} over space and time, and obtain
\begin{align*}
\| \p_x \rho (T) \|_{L^1(\R)} - \| \p_x \rho^0 \|_{L^1(\R)} + \int_0^T \int_{\R}(1-(k+1)\rho^k)|\p_x \rho | \p_x U + \int_0^T \int_{\R} F_k(\rho)\sgn(\p_x\rho)\p_{xx}U \le 0,
\end{align*}
which implies, since $\p_x U \le 0$,
\begin{align*}
\int_0^T\int_\R \big|\frac{\p \rho^{k+1}}{\p x } \big|\;   \big|{\partial_x} U\big| \le \int_0^T\int_{\R} | \p_x \rho | | \p_x U | + T \| \rho^0 \|_{L^1(\R)}\|U\|_{W^{2,\infty}} + \| \p_x \rho^0 \|_{L^1(\R)} - \| \p_x \rho (T) \|_{L^1(\R)} \\
\le C(T)\|\rho^0\|_{BV(\R)}.
\end{align*}
\end{proof}

Now we state the same lemma in several dimensions, using similar proof ideas but with heavier computations. 

\begin{lemma}[BV estimates in d dimensions]
\label{thm:LemmaMultiD}
Let $\rho$ be the unique solution of (\ref{eq:RegParaBis}), and denote by $D^SU$ the symmetric part of the jacobian matrix of $U$. Suppose that $D^SU \leq- \alpha I_d$ for some $\alpha\ge0$. Then for the Euclidian norm in $\R^d$, and for all $T>0$,  we have
\begin{align} \label{eq:BV} 
\int_{\R^d}\| \nabla \rho(t)\| \leq C(T)\|\rho^0\|_{BV}, \; \forall t \leq T, \qquad \alpha \int_0^T \int_{\R^d}\| \nabla \rho^{k+1} (t)\| dt\leq C(T)\|\rho^0\|_{BV}.
\end{align}
\end{lemma}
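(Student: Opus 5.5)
We follow the one-dimensional argument of Lemma~\ref{Lemma1D}, now working with the Euclidean norm of the gradient. Write $w_i=\p_i\rho$ for $i=1,\dots,d$. Differentiating \eqref{eq:RegParaBis} in $x_i$ gives
\[
\p_t w_i + \nabla\cdot\big(F_k'(\rho)\, w_i\, U\big) + \nabla\cdot\big(F_k(\rho)\,\p_i U\big) = \eps\Delta w_i .
\]
Expanding the last divergence, we get $\nabla\cdot(F_k(\rho)\p_iU)=F_k'(\rho)\nabla\rho\cdot\p_iU+F_k(\rho)\,\p_i(\nabla\cdot U)$. Only the first of these two terms carries a factor $F_k'$, which is large, of size $k$. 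The second term is harmless: it is bounded by $|\rho|\,\|U\|_{W^{2,\infty}}$, and its integral is controlled by $\|\rho^0\|_{L^1}$.

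\textbf{Step 1: evolution of $|\nabla\rho|$.} We multiply the equation for $w_i$ by $w_i/|\nabla\rho|$ and sum over $i$. To be rigorous, we use the regularization $\sqrt{\delta^2+|\nabla\rho|^2}$ and let $\delta\to0$; this is justified by the regularity in Lemma~\ref{lm:Prel}. The transport term gives
\[
\nabla\cdot\big(F_k'(\rho)|\nabla\rho|U\big)+F_k'(\rho)|\nabla\rho|\,(\nabla\cdot U)-F_k'(\rho)|\nabla\rho|\,\nabla\!\cdot U .
\]
More precisely, the identity $\sum_i \tfrac{w_i}{|\nabla\rho|}\nabla\cdot(F_k' w_i U)=\nabla\cdot(F_k'|\nabla\rho|U)$ holds up to a term that cancels: the leftover $F_k' U\cdot\nabla w_i$ pieces combine exactly into $U\cdot\nabla|\nabla\rho|$. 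The coupling term contributes
\[
F_k'(\rho)\,\frac{\nabla\rho^{T} DU\,\nabla\rho}{|\nabla\rho|}=F_k'(\rho)\,\frac{\nabla\rho^{T} D^SU\,\nabla\rho}{|\nabla\rho|}.
\]
On the diffusion side, convexity of the norm gives $\sum_i \tfrac{w_i}{|\nabla\rho|}\Delta w_i\le\Delta|\nabla\rho|$. Altogether we obtain
\[
\p_t|\nabla\rho|+\nabla\cdot\big(F_k'(\rho)|\nabla\rho|U\big)+F_k'(\rho)\frac{\nabla\rho^{T}D^SU\nabla\rho}{|\nabla\rho|}+F_k(\rho)\frac{\nabla\rho\cdot\nabla(\nabla\cdot U)}{|\nabla\rho|}\le\eps\Delta|\nabla\rho|.
\]
This is the exact analogue of \eqref{eq:gradient_1D}, with $\p_xU$ replaced by the quadratic form $D^SU$. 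The careful bookkeeping of the transport terms in this step is the main obstacle. One must check that no term of the form $F_k'|\nabla\rho|\,\nabla\cdot U$ survives with an uncontrolled sign, since such a term would come with the large factor $k$.

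\textbf{Step 2: sign splitting.} We split $F_k'(\rho)=1-(k+1)\rho^k$. The constant part $1$ gives a term bounded by $\|DU\|_{L^\infty}|\nabla\rho|$. The part $-(k+1)\rho^k$ gives
\[
-(k+1)\rho^k\,\frac{\nabla\rho^{T}D^SU\nabla\rho}{|\nabla\rho|}\ \ge\ \alpha (k+1)\rho^k|\nabla\rho|=\alpha|\nabla\rho^{k+1}|\ \ge0,
\]
because $D^SU\le-\alpha I_d$.

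\textbf{Step 3: integration and Gronwall.} We integrate over $\R^d$, using the cutoff $\eta_R$ exactly as in Lemma~\ref{Lemma1D}. Dropping the nonnegative term, we get
\[
\frac{d}{dt}\int|\nabla\rho|\le \|DU\|_{L^\infty}\int|\nabla\rho|+\|U\|_{W^{2,\infty}}\|\rho^0\|_{L^1}.
\]
Gronwall's lemma then yields the first bound in \eqref{eq:BV}, uniformly in $k$ and $\eps$.

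For the second bound, we integrate the same inequality over $[0,T]\times\R^d$ and keep the term $\alpha\iint|\nabla\rho^{k+1}|$ on the left. The right-hand side is then bounded by
\[
\|\nabla\rho^0\|_{L^1}+T\|DU\|_{L^\infty}\sup_{t\le T}\|\nabla\rho(t)\|_{L^1}+T\|U\|_{W^{2,\infty}},
\]
which is at most $C(T)\|\rho^0\|_{BV}$ by the first bound.
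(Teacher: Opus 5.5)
Your proposal is correct and follows essentially the paper's argument: you reach exactly the integrated inequality \eqref{eq:GronwallMultiD}, then split $F_k'=1-(k+1)\rho^k$ (dropping the term $\alpha\|\nabla\rho^{k+1}\|\ge 0$ for the first bound, keeping it for the second) and close with Gronwall, as the paper does. The only difference is cosmetic. You derive the pointwise multi-dimensional analogue of \eqref{eq:gradient_1D}, writing the transport part directly as $\nabla\cdot\big(F_k'(\rho)|\nabla\rho|U\big)$, whereas the paper works at the integrated level with a general convex $H$ and removes the transport contribution via two integrations by parts and $\nabla^2H(q)\,q=0$.
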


\begin{proof}
We proceed as before, although the computations are more involved. Let $H : \R^d \rightarrow \R$ be a convex function (which will then be chosen to approximate the Euclidean norm), and write $H_i$ for its spatial partial derivatives. Differentiating equation (\ref{eq:RegParaBis}) with respect to $x_i$, multiplying by $H_i(\nabla \rho)$, and summing over $i$, we obtain
\begin{align}
\label{eq:ptH}
\p_t H(\nabla \rho) + \sum_{i,j} \p_{x_i,x_j}\left(F_k(\rho)U^j\right) H_i(\nabla \rho)
= \eps \sum_i H_i(\nabla \rho) \Delta \p_{x_i}\rho = \eps \nabla H(\nabla \rho) \cdot \nabla (\Delta \rho).
\end{align}

We now analyze the right-hand side. We compute
\begin{align*}
\Delta \big( H(\nabla \rho) \big)
&= \sum_i \p_{x_i} \left( \sum_j H_j(\nabla \rho) \p_{x_i x_j} \rho \right) \\
&= \sum_{i,j} \p_{x_i}\big(H_j(\nabla \rho)\big) \p_{x_i x_j} \rho
  + \sum_{i,j} H_j(\nabla \rho) \p_{x_j} \p_{x_i x_i} \rho \\
&= \sum_{i,j,k} H_{k,j}(\nabla \rho) \p_{x_i x_k}\rho \, \p_{x_i x_j}\rho
  + \sum_i H_i(\nabla \rho)\p_{x_i}\Delta \rho.
\end{align*}
This can be written more concisely as
\begin{align*}
\Delta \big( H(\nabla \rho) \big)
= \mathrm{Tr}\!\left(\nabla^2 \rho \cdot \nabla^2 H(\nabla \rho) \cdot \nabla^2 \rho\right)
+ \nabla H(\nabla \rho) \cdot \nabla (\Delta \rho).
\end{align*}
The second term on the right-hand side coincides with the last term in (\ref{eq:ptH}). Since $H$ is convex, we have
\begin{align*}
\mathrm{Tr}\!\left(\nabla^2 \rho \cdot \nabla^2 H(\nabla \rho) \cdot \nabla^2 \rho\right) \ge 0,
\end{align*}
and therefore
\begin{align*}
\nabla H(\nabla \rho) \cdot \nabla (\Delta \rho)
\le \Delta \big( H(\nabla \rho) \big).
\end{align*}

Applying this inequality and integrating over $\R^d$—see the one-dimensional proof for the treatment of distributional terms using cutoff functions—we obtain from \eqref{eq:ptH}
\begin{align*}
\frac{d}{dt}\int_{\R^d} H(\nabla \rho)
+ \sum_{i,j} \int_{\R^d} \p_{x_i,x_j}\left(F_k(\rho)U^j\right) H_i(\nabla \rho)
\le 0.
\end{align*}
Integrating by parts yields
\begin{align*}
\frac{d}{dt}\int_{\R^d} H(\nabla \rho)
- \sum_{i,j} \int_{\R^d} \p_{x_i}\left(F_k(\rho)U^j\right)
  \p_{x_j}\left(H_i(\nabla \rho)\right)
\le 0.
\end{align*}

The second term on the left-hand side can be decomposed as
\[
\begin{aligned}
\sum_{i,j}\int_{\R^d} \p_{x_i}(F_k(\rho)U^j)\p_{x_j}(H_i(\nabla \rho))
&= \sum_{i,j}\int_{\R^d} \p_{x_i}(F_k(\rho)U^j) \p_{x_j}(H_i(\nabla \rho)) \\
&= \sum_{i,j,l} \int_{\R^d} F'_k(\rho) U^j H_{i,l} \rho_i \rho_{l,j}
 + \sum_{i,j} \int_{\R^d} F_k(\rho) U^j_i \p_{x_j}(H_i(\nabla \rho)).
\end{aligned}
\]

Integrating by parts once more in the second term, we obtain
\begin{align*}
\sum_{i,j} \int_{\R^d} F_k(\rho) U^j_i \p_{x_j}(H_i(\nabla \rho))
&= - \sum_{i,j} \int_{\R^d} \p_{x_j}\big(F_k(\rho) U^j_i\big) H_i(\nabla \rho) \\
&= - \sum_{i,j} \int_{\R^d} F'_k(\rho) \rho_j U^j_i H_i(\nabla \rho)
   - \sum_{i,j} \int_{\R^d} F_k(\rho) U^j_{i,j} H_i(\nabla \rho).
\end{align*}

Altogether, this gives
\begin{align*}
\frac{d}{dt}\int_{\R^d} H(\nabla \rho)
&\le \sum_{i,j,l} \int_{\R^d} F'_k(\rho) U^j H_{i,l} \rho_i \rho_{l,j}
 - \sum_{i,j} \int_{\R^d} F'_k(\rho) \rho_j U^j_i H_i(\nabla \rho) \\
&\quad - \sum_{i,j} \int_{\R^d} F_k(\rho) U^j_{i,j} H_i(\nabla \rho).
\end{align*}
In a more compact form, we may write
\begin{align*}
\frac{d}{dt}\int_{\R^d} H(\nabla \rho)
&\le \int_{\R^d} F_k'(\rho)
   \left\langle \nabla^2 H(\nabla \rho)\nabla \rho,
   \nabla^2 \rho\, U \right\rangle \\
&\quad - \int_{\R^d} F_k'(\rho)
   \left\langle DU \nabla H(\nabla \rho), \nabla \rho \right\rangle \\
&\quad - \int_{\R^d} F_k(\rho)
   \left\langle \nabla(\nabla \cdot U), \nabla H(\nabla \rho) \right\rangle.
\end{align*}

We now choose a function $H$ which approximates $H(q) = \|q\|$ (we omit the details on the approximation), for which we have
\begin{align*}
\nabla H(q) = \frac{q}{\|q\|},
\qquad
H_i(q) = \frac{q_i}{\|q\|},
\qquad
\nabla^2 H(q) = \frac{I_d}{\|q\|} - \frac{q \otimes q}{\|q\|^2}.
\end{align*}
This gives us
\[
\nabla^2 H(q) q = 0,
\]
and therefore
\[
\int_{\R^d} F_k'(\rho)
\left\langle \nabla^2 H(\nabla \rho)\nabla \rho,
\nabla^2 \rho\, U \right\rangle
= 0.
\]
Moreover, $H$ is radial, so $\nabla H$ is parallel to its argument, and

\[
\begin{aligned}
\left\langle DU \nabla H(\nabla \rho),\nabla \rho \right\rangle &= \left\langle D^SU \nabla H(\nabla \rho),\nabla \rho \right\rangle \\
&= \left\langle D^SU \f{\nabla \rho}{\| \nabla \rho \|},\nabla \rho \right\rangle.
\end{aligned}
\]
Hence,
\begin{equation}
\label{eq:GronwallMultiD}
\frac{d}{dt}\int_{\R^d} \|\nabla \rho\|
\le - \int_{\R^d} F_k'(\rho)
   \left\langle D^S U \frac{\nabla \rho}{\|\nabla \rho\|},
   \nabla \rho \right\rangle
 - \int_{\R^d} F_k(\rho)
   \left\langle \nabla(\nabla \cdot U),
   \frac{\nabla \rho}{\|\nabla \rho\|} \right\rangle.
\end{equation}

On the right-hand side, the first term is controlled using $-\beta \le D^S U \le 0$, for $\beta = \|U\|_{W^{2,\infty}}$, so that $F_k'$ only needs to be bounded from above (uniformly in $k$). The third term is estimated by the $L^1$ norm of $\rho$ and the $W^{2,\infty}$ norm of $U$. We thus obtain
\begin{align*}
\frac{d}{dt}\int_{\R^d} \|\nabla \rho(t)\|
\le \beta \int_{\R^d} \|\nabla \rho(t)\|
+ \underbrace{\|\rho(t)\|_{L^1(\R^d)}}_{=\|\rho^0\|}
  \|U\|_{W^{2,\infty}}.
\end{align*}
Arguing as before, this yields the desired BV bound on $\rho$. 
\\

For the estimate on the pressure, we go back to \eqref{eq:GronwallMultiD}, integrate in time and remind that $F_k'(\rho) = 1-(k+1)\rho^k$, which means that

\[
\|\nabla \rho(T)\|_{L^1(\R^d)^d}-\|\nabla \rho^0\|_{L^1(\R^d)^d}
\le - \int_0^T\int_{\R^d} (1-(k+1)\rho^k)
   \left\langle D^S U \frac{\nabla \rho}{\|\nabla \rho\|},
   \nabla \rho \right\rangle
 + T\|U\|_{W^{2,\infty}},
\]
which rewrites

\[
\begin{aligned}
-\int_0^T\int_{\R^d}(k+1)\rho^k \left\langle D^S U \frac{\nabla \rho}{\|\nabla \rho\|}, \nabla \rho \right\rangle \le \|\nabla \rho^0\|_{L^1(\R^d)^d} -\|\nabla \rho(T)\|_{L^1(\R^d)^d} + T \|U\|_{W^{2,\infty}} \\
-\underbrace{\int_0^T\int_{\R^d}\left\langle D^S U \frac{\nabla \rho}{\|\nabla \rho\|},\nabla \rho \right\rangle}_{\le \beta \|\nabla \rho \|_{L^1([0,T];L^1(\R^d)^d)}}.
\end{aligned}
\]
Notice that

\[
-(k+1)\rho^k \left\langle D^S U \frac{\nabla \rho}{\|\nabla \rho\|}, \nabla \rho \right\rangle  \ge \alpha (k+1)\rho^k\|\nabla\rho\| = \alpha \|\nabla \rho^{k+1}\|,
\]
which implies that 

\[
\begin{aligned}
\alpha \int_0^T\|\nabla \rho^{k+1}(t)\|dt &\le \|\nabla \rho^0\|_{L^1(\R^d)^d} -\|\nabla \rho(T)\|_{L^1(\R^d)^d} + T \|U\|_{W^{2,\infty}} + \beta \|\nabla \rho \|_{L^1([0,T];L^1(\R^d)^d)}\\
&\le C(T)\|\rho^0\|_{BV}
\end{aligned}
\]
and Lemma \ref{thm:LemmaMultiD} is proved. 
\end{proof}

This lemma provides strong compactness for $\rho_{k,\eps}$ as we state it now.
\begin{corollary}[Strong convergence of $\rho_{k,\eps}$]
\label{thm:conv_rho}
Up to a subsequence, for all $t \in [0,T]$, we have
\begin{align*}
    \rho_{k,\eps}(t) \longrightarrow \rho(t),\quad \text{in} \quad L^1_{loc}(\R^d), \qquad \text{as } \quad
    \eps \to 0, \; k \to \infty.
\end{align*}
\end{corollary}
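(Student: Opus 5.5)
The plan is a standard Aubin--Lions--Simon argument: combine the uniform spatial BV bound of Lemma~\ref{thm:LemmaMultiD} with a uniform time-equicontinuity estimate in a weak norm, then upgrade to convergence for every $t$.

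\textbf{Spatial compactness.} For each fixed $t$, the family $\{\rho_{k,\eps}(t)\}$ satisfies $0\le\rho_{k,\eps}\le 1$ and $\|\rho_{k,\eps}(t)\|_{L^1}=1$ by Lemma~\ref{lm:Prel}. It also satisfies $\int\|\nabla\rho_{k,\eps}(t)\|\le C(T)\|\rho^0\|_{BV}$ by \eqref{eq:BV}. By the Rellich--Kondrachov compact embedding of $BV(B_R)$ into $L^1(B_R)$, the family is therefore relatively compact in $L^1_{loc}(\R^d)$ for each fixed $t$.

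\textbf{Time equicontinuity.} I will show that the family is equicontinuous in time with values in $L^1_{loc}$, uniformly in $k$ and $\eps$.
\begin{itemize}
\item First, in a dual norm. For $\varphi\in C^2_c(\R^d)$, equation \eqref{eq:RegParaBis} gives
\[
\Big|\int(\rho(t)-\rho(s))\varphi\Big|\le |t-s|\big(\|U\|_{L^\infty}\|\nabla\varphi\|_{L^\infty}+\eps\|\Delta\varphi\|_{L^\infty}\big),
\]
using $|F_k(\rho)|\le\rho$ and $\|\rho\|_{L^1}=1$. This bound is uniform in $k$ and $\eps\le1$.
\item Second, in $L^1$ by mollification. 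Take a mollifier $\omega_\delta$ and a cutoff $\chi$. Then
\[
\int\chi|\rho(t)-\rho(s)|\le \int\chi|\rho(t)-\rho(t)*\omega_\delta| + \int\chi|\rho(s)-\rho(s)*\omega_\delta| + \int\chi|(\rho(t)-\rho(s))*\omega_\delta| .
\]
The first two terms are at most $C\delta$ by the BV bound. For the last term, test the dual estimate against $\varphi=(\chi\,\sgn(\cdot))*\omega_\delta$, or, more simply, bound it pointwise using $\varphi=\omega_\delta(x-\cdot)$ and integrate over $\mathrm{supp}\,\chi$. This gives a bound $C_\chi|t-s|\delta^{-2}$.
\item Optimizing over $\delta$ yields a modulus of continuity $\sup_{k,\eps}\int\chi|\rho_{k,\eps}(t)-\rho_{k,\eps}(s)|\le C_\chi|t-s|^{1/3}$.
\end{itemize}

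\textbf{Extraction and conclusion.}
\begin{itemize}
\item Take a countable dense set of times in $[0,T]$ and an exhaustion of $\R^d$ by balls $B_R$.
\item Using the spatial compactness and a diagonal extraction, obtain a single subsequence that converges in $L^1(B_R)$ for every rational time and every $R$.
\item The uniform modulus of continuity then gives, by the Arzel\`a--Ascoli argument, that this subsequence is Cauchy in $L^1(B_R)$ for every $t\in[0,T]$. Hence it converges for all $t$ to some $\rho(t)$, with $\rho\in C([0,T];L^1_{loc})$.
\item Lower semicontinuity transfers the uniform bounds $0\le\rho\le1$ and the BV estimate to the limit.
\end{itemize}

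The main obstacle is the equicontinuity step: the flux term must be controlled uniformly in $k$ even though $F_k'$ is of size $k$. The key point is to use only $|F_k(\rho)|\le\rho$ and never $F_k'$ there, which keeps the estimate uniform.
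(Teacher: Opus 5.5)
Your argument is correct and is essentially the paper's. The paper combines the uniform $L^\infty([0,T];BV)$ bound with a uniform $W^{-1,1}$ bound on $\partial_t\rho_{k,\eps}$, obtained as you do from $|F_k(\rho)|\le\rho$ (with $\eps\le 1$), and then simply cites the Aubin--Lions--Simon lemma; your mollification and Arzel\`a--Ascoli steps carry out that lemma by hand. The only slip is minor: the ``pointwise'' variant with $\varphi=\omega_\delta(x-\cdot)$ and sup-norm bounds gives $\delta^{-d-2}$ rather than $\delta^{-2}$, while your first variant, or Young's inequality with $\|\nabla\omega_\delta\|_{L^1}$, does give $\delta^{-2}$; since any negative power of $\delta$ still yields a uniform H\"older modulus, the conclusion is unaffected.
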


\begin{proof}
Observe that the $W^{-1,1}(\R^d)$ norm of $\p_t \rho_{k,\eps}(t)$ is bounded for every $t>0$, since
\[
\begin{aligned}
\|\p_t \rho_{k,\eps}(t)\|_{W^{-1,1}(\R^d)}
&\le \|\nabla \cdot (F_k(\rho_{k,\eps})U)(t)\|_{W^{-1,1}(\R^d)}
   + \eps \|\Delta \rho_{k,\eps}(t)\|_{W^{-1,1}(\R^d)} \\
&\lesssim \|F_k(\rho_{k,\eps})U\|_{L^1(\R^d)^d}
   + \eps \|\nabla \rho_{k,\eps}(t)\|_{L^1(\R^d)^d} \\
&\le C(T)\|\rho^0\|_{BV},
\end{aligned}
\]
where $C(T)$ is independent of $k$ and $\eps$, provided that $\eps \le 1$. 

Combining this estimate with the previous bounds, we obtain
\[
\rho_{k,\eps} \text{ bounded in } L^{\infty}([0,T],BV(\R^d)),
\qquad
\partial_t \rho_{k,\eps} \text{ bounded in } L^{\infty}([0,T],W^{-1,1}(\R^d)).
\]
Since, by the Rellich theorem, $BV(\R^d)$ is compactly embedded in $L^1_{loc}(\R^d)$, and $L^1_{loc}(\R^d)$ is continuously embedded in $W^{-1,1}(\R^d)$, the Aubin--Lions--Simon lemma~\cite{simon_compact_1986} implies that, up to a subsequence,
\[
\rho_{k,\eps} \to \rho \quad \text{in } C([0,T],L^1_{loc}(\R^d)),
\]
which concludes the proof.
\end{proof}

Now, we are able to prove that, at the limit $\eps \to 0$ and $k \to + \infty$, equations \eqref{eq:continuity} and \eqref{eq:state} are satisfied.

\begin{corollary}
\label{thm:state&cont}
    Denote $\rho$ the $C([0,T],L^1_{loc})$ limit of $\rho_{k,\eps}$ after extraction, and $p$ the $L^\infty$-weak $\ast$ limit of $p_{k,\eps}$ after further extraction. Then $(\rho,p)$ satisfies both the continuity equation \eqref{eq:continuity} in the distributional sense, and the law of state \eqref{eq:state}.
\end{corollary}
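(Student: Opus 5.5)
The argument passes to the limit in the weak formulation of \eqref{eq:RegParaBis}, using the strong convergence of Corollary~\ref{thm:conv_rho} to handle the single nonlinear product. For a test function $\varphi\in C_c^\infty([0,T)\times\R^d)$, the solution $\rho_{k,\eps}$ satisfies
\[
\int_0^T\!\!\int_{\R^d} \rho_{k,\eps}\p_t\varphi + \rho_{k,\eps}(1-p_{k,\eps})U\cdot\nabla\varphi + \eps\,\rho_{k,\eps}\Delta\varphi \;+\;\int_{\R^d}\rho^0\varphi(0)=0,
\]
because $F_k(\rho)=\rho(1-\rho^k)=\rho(1-p_{k,\eps})$. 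The viscous term is bounded by $\eps\|\Delta\varphi\|_{L^1}$, so it vanishes as $\eps\to0$. The linear term $\rho_{k,\eps}\p_t\varphi$ converges by the $C([0,T],L^1_{loc})$ convergence. For the flux term, we use that $0\le p_{k,\eps}\le1$, so $p_{k,\eps}\rightharpoonup p$ weakly-$\ast$ in $L^\infty(Q_T)$ along a subsequence. Then $\rho_{k,\eps}p_{k,\eps}\,U\cdot\nabla\varphi\to \rho p\,U\cdot\nabla\varphi$ in integral: split it as $(\rho_{k,\eps}-\rho)p_{k,\eps}U\cdot\nabla\varphi$, which is bounded by $\|U\|_\infty\|\nabla\varphi\|_\infty\|\rho_{k,\eps}-\rho\|_{L^1(\mathrm{supp}\,\varphi)}\to0$, plus $\rho p_{k,\eps}U\cdot\nabla\varphi$, which converges by weak-$\ast$ convergence since $\rho U\cdot\nabla\varphi\in L^1$. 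This gives \eqref{eq:continuity} in the sense of distributions. The bounds $0\le\rho\le1$ and $0\le p\le1$ pass to the limit trivially.

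For the law of state, it suffices to show $\rho p=p$, since this is $p(1-\rho)=0$. Write $p_{k,\eps}(1-\rho_{k,\eps})=\rho_{k,\eps}^k(1-\rho_{k,\eps})$. Since $\rho\in[0,1]$, an elementary maximization gives
\[
0\le \rho^k(1-\rho)\le \sup_{s\in[0,1]}s^k(1-s)=\frac{1}{k+1}\Big(\frac{k}{k+1}\Big)^k\le\frac1{k+1}\to0,
\]
uniformly, so $p_{k,\eps}(1-\rho_{k,\eps})\to0$ in $L^\infty$. On the other hand, the same weak--strong argument as for the flux gives $p_{k,\eps}(1-\rho_{k,\eps})\rightharpoonup p(1-\rho)$ in $\mathcal D'$, locally in $L^1$. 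Hence $p(1-\rho)=0$ a.e.

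The only genuinely delicate point is the product $\rho_{k,\eps}p_{k,\eps}$. Here we have only weak compactness for $p_{k,\eps}$, since the pressure estimate of Lemma~\ref{thm:LemmaMultiD} gives no time compactness. So we must rely entirely on strong $L^1_{loc}$ convergence of $\rho_{k,\eps}$ in space-time. This follows from the uniform-in-time $L^1_{loc}$ convergence of Corollary~\ref{thm:conv_rho} by dominated convergence in $t$, using $|\rho_{k,\eps}|\le1$. Finally, the test functions are compactly supported, so the locality of the convergence causes no issue.
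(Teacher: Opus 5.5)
Your proof is correct and follows the paper's argument. You use the weak--strong product limit for $\rho_{k,\eps}p_{k,\eps}$ in the flux, the vanishing of $\eps\Delta\rho_{k,\eps}$ in $\mathcal{D}'$, and the uniform bound $\rho_{k,\eps}^k(1-\rho_{k,\eps})\le 1/(k+1)$ to conclude $p(1-\rho)=0$; your version is simply more explicit about test functions, the splitting of the product, and the initial datum.
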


\begin{proof}
    For the continuity equation \eqref{eq:continuity}, since $p_{k,\eps} = \rho_{k,\eps}^k$ is bounded in $L^{\infty}(Q_T)$, one can extract a subsequence such that 
    \[
    p_{k,\eps} \overset{\ast}{\rightharpoonup} p 
\quad \text{in} \quad L^{\infty}(Q_T).
    \]
    Therefore, by weak-strong limits 

    \[
    \rho_{k,\eps}(1-p_{k,\eps}) \overset{\ast}{\rightharpoonup} \rho(1-p) 
\quad \text{in} \quad L^{\infty}(Q_T),
    \]
    and thus 

    \[
    \p_t \rho_{k,\eps} + \nabla \cdot (\rho_{k,\eps}(1-p_{k,\eps})U) \overset{\mathcal{D}'}{\longrightarrow} \p_t \rho +\nabla \cdot (\rho(1-p)U).
    \]
    Since 
    \[
    \eps \Delta \rho_{k,\eps} \overset{\mathcal{D}'}{\longrightarrow} 0,
    \]
    we have that $(\rho,p)$ satisfy \eqref{eq:continuity} in the distributional sense. As for the law of state \eqref{eq:state}, we also have by weak-strong limits that 

    \[
    p_{k,\eps}(1-\rho_{k,\eps}) \overset{\mathcal{D}'}{\longrightarrow} p(1-\rho).
    \]
    But notice that 

    \[
    \begin{aligned}
    \|p_{k,\eps}(1-\rho_{k,\eps})\|_{L^{\infty}(Q_T)} &= \|\rho_{k,\eps}^k(1-\rho_{k,\eps})\|_{L^{\infty}(Q_T)}\\
    &\le \f{1}{k+1}\\
    &\longrightarrow 0.
    \end{aligned}
    \]
    So, by uniqueness of the limit in $\mathcal{D}'$
    \[
    p(1-\rho) = 0.
    \]
\end{proof}

 The major difficulty is that such strong compactness does not hold for the pressure $p_{k,\eps}$. The estimates and compactness results  at hand  are sufficient to pass to the limit in the continuity equation~(\ref{eq:continuity}) and in the law of state~(\ref{eq:state}), showing that the limits $\rho$ and $p$ satisfy both relations. However, we still need some information on the convergence of $p$ in order to prove that the pressure vanishes on the frontal part $\partial \Omega_f$. The difficulties concerning the compactness of $p_{k,\eps}$ are multiple
\begin{itemize}
\item First, the BV estimate on $\rho_{k,\eps}^{k+1}$ only holds when $\alpha>0$;
\item Second, this is not a bound on the BV norm of $p_{k,\eps}$ but on that of $p_{k,\eps}^{1+1/k}$ or, equivalently, of $\rho_{k,\eps} p_{k,\eps}$;
\item Last, the estimate only yields spatial compactness integrated in time, while the lack of time regularity for the pressure prevents the use of the Aubin--Lions lemma.
\end{itemize}
The first drawback is not a big deal as it is enough to consider a slightly more restrictive assumption, i.e., $\alpha>0$ instead of $\alpha\ge 0$. The second can be treated as in Corollary~\ref{thm:state&cont}, using weak-strong limits and the identity $\rho p = p$ Unfortunately, the third drawback is a serious issue and in the next section we develop a different argument to study the limit $p$ and establish the boundary condition~(\ref{eq:boundary}) for the pressure.

\begin{remark}
Notice that $\rho_{k,\eps}$ can be interpreted as a solution of the continuity equation with velocity
\[
v_t = (1-\rho^k)U - \eps\frac{\nabla \rho}{\rho}.
\]
In particular, $\rho_{k,\eps}$ is an absolutely continuous curve of probability measures with respect to the $W_1$ metric (see~\cite{santambrogio_optimal_2015} for the definition of $W_1$). Its metric derivative is given by the $L^1(\rho)$ norm of $v_t$ :
\[
\begin{aligned}
\int_{\R^d} \left\|(1-\rho^k(t))U-\eps\f{\nabla \rho(t)}{\rho(t)} \right\| \rho(t) &\le \int_{\R^d} \rho(t)(1-\rho^k(t))\|U\| + \eps \int_{\R^d}\|\nabla \rho(t)\| \\
&\le \underbrace{\|\rho(t)\|_{L^1(\R^d)}}_{=\|\rho^0\|_{L^1(\R^d)}}\|U\|_{\infty} + \eps C(T)\|\rho^0\|_{BV},
\end{aligned}
\]
 which is uniformly bounded thanks to the previous lemma (see~\cite{ambrosio_gradient_2005} for the notion of metric derivative and its relation to absolutely continuous curves of measures solving the continuity equation). 

This uniform bound in $k$ and $\eps$ implies that $\rho_{k,\eps}$ are equi-Lipschitz curves valued into the Wasserstein space $W_1$. To use the Ascoli-Arzela theorem, we need compactness of $\rho_{k,\eps}(t)$ for each $t$ in $W_1$, which can be obtained with a uniform control over second order moments. Set $M(t) = \int_{\R^d}|x|^2\rho_{k,\eps}(t)$ and compute 

\[
\begin{aligned}
\f{d}{dt} M(t) 
&=  - \int_{\R^d} |x|^2\nabla \cdot(F_k(\rho_{k,\eps}(t))U)+ \eps \int_{\R^d} |x|^2\Delta \rho_{k,\eps}(t)\\
&= 2 \int_{\R^d} F_k(\rho_{k,\eps}(t))U\cdot x + 2d\eps\underbrace{\int_{\R^d}\rho_{k,\eps}}_{=1}\\
&\le \|U\|_{L^\infty(\R^d)}\int_{\R^d}|x|\rho_{k,\eps}(t) +2d\eps \\
&\le  \|U\|_{L^\infty(\R^d)} \left(\int_{\R^d}|x|^2 \rho_{k,\eps}(t)\right)^{1/2} \underbrace{\left(\int_{\R^d}\rho_{k,\eps}(t)\right)^{1/2}}_{=1} + 2d\eps\\
&\le  \|U\|_{L^\infty(\R^d)} M(t)^{1/2} + 2d\eps \\
&\le CM(t) + C,
\end{aligned}
\]
provided that $\eps \le 1$. Using the Gronwall lemma, assuming that 

\[
\int_{\R^d} |x|^2\rho^0 < + \infty,
\]
we get

\[
\int_{\R^d} |x|^2\rho_{k,\eps}(t) \le C(T) \int_{\R^d} |x|^2\rho^0+ C(T),
\]
where $C(T)$ does not depend on $k,\eps$.
Hence, by the Ascoli--Arzelà theorem, one can extract a subsequence such that, for every~$t$, $\rho_{k,\eps}(t)$ converges to a probability measure $\rho(t)$ with respect to the Wasserstein distance, that is, for the weak convergence of measures. Moreover, the bounds show that $\rho_{k,\eps}(t)$ is uniformly bounded in $BV(\R^d)$, which is compactly embedded in $L^1_{loc}(\R^d)$ and transforms the weak convergence into strong (actually in $L^1$ and not only $L^1_{loc}$ thanks to the aforementioned moment bounds).
\end{remark}

\subsection{The pressure vanishes on $\partial \Omega_f(t)$}

We first derive the equation satisfied by the pressure $p_{k,\eps}$ for finite $k$ and $\eps$, before carefully passing to the limit.

\begin{lemma}[Equation for the pressure for finite $k,\eps$]
Let $k,\eps >0$, let $\rho_{k,\eps}$ be the solution of \eqref{eq:RegParaBis}, and set $p_{k,\eps} = \rho_{k,\eps}^k$. Then $p_{k,\eps}$ satisfies the following equation, in the classical sense:
\begin{equation}
\label{eq:pressure_k}
\p_t p_{k,\eps} + \nabla\!\left[p_{k,\eps}-\f{k+1}{2}p_{k,\eps}^2\right]\!\cdot U 
+ k p_{k,\eps}(1-p_{k,\eps}) \nabla \cdot U 
= \eps \Delta p_{k,\eps} 
- 4\eps \left(1-\f1k\right)\!\left|\nabla p_{k,\eps}^{\f12}\right|^2.
\end{equation}
\end{lemma}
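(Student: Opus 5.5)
The identity is a direct chain-rule computation. The plan is to multiply the equation \eqref{eq:RegParaBis} for $\rho=\rho_{k,\eps}$ by $k\rho^{k-1}$, so that the left-hand time derivative becomes $\p_t p$ with $p=p_{k,\eps}=\rho^k$, and then rewrite each remaining term as a function of $p$. Classical differentiability is provided by Lemma~\ref{lm:Prel}, since $\rho$ is smooth on $(0,T]\times\R^d$.

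\textbf{Positivity of $\rho$.} I first expand the flux term as $\nabla\cdot(F_k(\rho)U)=F_k'(\rho)\nabla\rho\cdot U+F_k(\rho)\nabla\cdot U$. To justify using powers $\rho^{k-1}$ and $\rho^{k-2}$ with possibly non-integer or small $k$, I need $\rho>0$ for $t>0$. This follows from the strong maximum principle for the linear parabolic equation that $\rho$ satisfies, whose coefficients are smooth and bounded, together with $\rho^0\ge0$ and $\int\rho^0=1$. Hence $p^{1/2}=\rho^{k/2}$ is smooth for $t>0$, and all manipulations below are classical.

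\textbf{Transport and reaction terms.} With $F_k'(\rho)=1-(k+1)\rho^k=1-(k+1)p$ and $\nabla p=k\rho^{k-1}\nabla\rho$, I get
\[
k\rho^{k-1}F_k'(\rho)\nabla\rho\cdot U=(1-(k+1)p)\nabla p\cdot U=\nabla\Big[p-\tfrac{k+1}{2}p^2\Big]\cdot U .
\]
For the divergence term, $k\rho^{k-1}F_k(\rho)=k\rho^k(1-\rho^k)=kp(1-p)$, which gives the term $kp(1-p)\nabla\cdot U$.

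\textbf{Diffusion term.} I expand the Laplacian of $p$:
\[
\Delta p=k\rho^{k-1}\Delta\rho+k(k-1)\rho^{k-2}|\nabla\rho|^2 .
\]
It follows that $\eps k\rho^{k-1}\Delta\rho=\eps\Delta p-\eps k(k-1)\rho^{k-2}|\nabla\rho|^2$. Next I compute $\nabla p^{1/2}=\tfrac k2\rho^{k/2-1}\nabla\rho$, so $|\nabla p^{1/2}|^2=\tfrac{k^2}{4}\rho^{k-2}|\nabla\rho|^2$. Therefore
\[
k(k-1)\rho^{k-2}|\nabla\rho|^2=4\big(1-\tfrac1k\big)|\nabla p^{1/2}|^2 .
\]
Collecting the transport, reaction and diffusion terms yields exactly \eqref{eq:pressure_k}.

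The only point that is not purely mechanical is the positivity of $\rho$ for $t>0$. It is needed so that the negative powers of $\rho$, and $p^{1/2}$, make sense classically when $k<2$. Alternatively, one can derive the identity for $\rho+\delta$ and let $\delta\to0$, noting that the final right-hand side is well defined whenever $p^{1/2}$ is $C^1$.
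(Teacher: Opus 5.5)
Your proposal is correct and follows the paper's proof: you multiply \eqref{eq:RegParaBis} by $k\rho^{k-1}$, rewrite the transport and reaction terms in terms of $p$, and expand $\Delta\rho^k$ using $\rho^{k-2}|\nabla\rho|^2=\frac{4}{k^2}|\nabla\rho^{k/2}|^2$, exactly as the paper does. Your strong maximum principle argument for $\rho>0$ at positive times is a worthwhile justification that the paper leaves implicit, since it is what makes $\rho^{k-1}$, $\rho^{k-2}$ and $p^{1/2}$ classically meaningful when $k<2$.
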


\begin{proof}
For simplicity, write $\rho=\rho_{k,\eps}$ and $p=p_{k,\eps}$. 
Multiplying equation \eqref{eq:RegParaBis} by $k\rho^{k-1}$, we obtain
\[
k\rho^{k-1}\p_t\rho 
+ k\rho^{k-1}(1-(k+1)\rho^k)\nabla \rho \cdot U 
+ k \rho^k(1-\rho^k)\nabla \cdot U 
= \eps k\rho^{k-1}\Delta \rho.
\]
This yields
\[
\p_t \rho^k 
+ \nabla \!\left[ \rho^k - \f{k+1}{2}\rho^{2k}\right]\!\cdot U 
+ k \rho^k(1-\rho^k) \nabla \cdot U 
= \eps \Delta \rho^k 
- \eps k(k-1)\underbrace{\rho^{k-2}|\nabla \rho |^2}_{\f{4}{k^2}|\nabla \rho ^{k/2}|^2},
\]
and \eqref{eq:pressure_k} follows immediately.
\end{proof}

Unfortunately, we do not obtain strong compactness for \( p_{k,\varepsilon} \), and therefore must proceed with care when passing to the limit in \eqref{eq:pressure_k}. To address this difficulty, we establish a more general identity similar to the previous one, where instead of considering $\rho_k^k$ we will look at $\rho_k^n$ for two independent indexes $k,n$. This formulation enables us to show that, in the limit, \( p \) satisfies a differential inequality, which in turn yields directional regularity.

\begin{lemma}
    Let $k,\eps >0$, let $\rho_{k,\eps}$ be the solution of \eqref{eq:RegParaBis}, and set $p_{k,\eps} = \rho_{k,\eps}^k$. Then, for every $n>0$, $p_{k,\eps}$ satisfies the following inequality, in the classical sense:

    \begin{equation}
    \label{eq:pressure_kn}
        \p_t \rho^n_{k,\eps} + \nabla[\rho^n_{k,\eps}-np_{k,\eps}]\cdot U + n[\rho^n_{k,\eps}-\rho_{k,\eps}^{n+k}](\nabla \cdot U) \le \eps \Delta\rho^n_{k,\eps}.
    \end{equation}
\end{lemma}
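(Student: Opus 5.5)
The plan is a direct chain-rule computation, as in the proof of \eqref{eq:pressure_k}, but multiplying \eqref{eq:RegParaBis} by $n\rho^{n-1}$ instead of $k\rho^{k-1}$. Write $\rho=\rho_{k,\eps}$ and $p=p_{k,\eps}=\rho^k$. By Lemma~\ref{lm:Prel}, $\rho$ is smooth on $(0,T]\times\R^d$ with $0\le\rho\le1$, so all manipulations are classical. Near points where $\rho=0$ and $n<2$, I would first work with $(\rho+\delta)^n$ and let $\delta\to0$.

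\textbf{Step 1 (exact identity).} Expand $\nabla\cdot(F_k(\rho)U)=(1-(k+1)\rho^k)\nabla\rho\cdot U+\rho(1-\rho^k)\nabla\cdot U$, multiply by $n\rho^{n-1}$, and use $n\rho^{n-1}\nabla\rho=\nabla\rho^n$ together with
\[
n\rho^{n-1}\Delta\rho=\Delta\rho^n-n(n-1)\rho^{n-2}|\nabla\rho|^2 .
\]
This gives
\[
\p_t\rho^n+\nabla\rho^n\cdot U-n(k+1)\rho^{n+k-1}\nabla\rho\cdot U+n[\rho^n-\rho^{n+k}]\nabla\cdot U=\eps\Delta\rho^n-\eps n(n-1)\rho^{n-2}|\nabla\rho|^2 .
\]
The zero-order term $n[\rho^n-\rho^{n+k}]\nabla\cdot U$ already has exactly the form in \eqref{eq:pressure_kn}.

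\textbf{Step 2 (matching the transport term).} Since $n\nabla p=nk\rho^{k-1}\nabla\rho$, the left-hand side of \eqref{eq:pressure_kn} equals the left-hand side of the identity in Step~1 plus the remainder
\[
\mathcal R:=n\rho^{k-1}\big((k+1)\rho^n-k\big)\nabla\rho\cdot U .
\]
So \eqref{eq:pressure_kn} is equivalent to the pointwise inequality
\[
\mathcal R\le-\eps n(n-1)\rho^{n-2}|\nabla\rho|^2 .
\]

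\textbf{Step 3 (the diffusion term).} For $n\ge1$ the term $-\eps n(n-1)\rho^{n-2}|\nabla\rho|^2$ is nonpositive and can be dropped, as with $-4\eps(1-\frac1k)|\nabla p^{1/2}|^2$ in \eqref{eq:pressure_k}. For $0<n<1$ this term is positive. In that range I would rely on concavity of $\rho\mapsto\rho^n$ at the level of the whole equation rather than pointwise, and I expect the argument to need either $n\ge1$ or an extra condition on the sign of $\nabla\rho\cdot U$.

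\textbf{Main obstacle.} The key difficulty is controlling the sign of $\mathcal R$. The factor $(k+1)\rho^n-k$ changes sign at $\rho=(k/(k+1))^{1/n}$, and $\nabla\rho\cdot U$ has no a priori sign. My first attempt would be to extract a sign from the structure of the flux: $F_k'(\rho)=1-(k+1)\rho^k$ changes sign at the same scale. I would try to show that $\mathcal R$ pairs with the transport term so that the required one-sided inequality holds, using the upwind directional monotonicity of $\rho$ along $U$ suggested by the entropy structure. If the pointwise inequality fails, I would verify the statement after rewriting the transport coefficient through the chain rule. Checking this remainder carefully is the step on which the proof hinges.
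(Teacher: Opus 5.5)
Your Step 1 is exactly the computation in the paper's proof, and your remainder $\mathcal R=n\rho^{k-1}\bigl((k+1)\rho^n-k\bigr)\nabla\rho\cdot U$ is correct. One slip: the equivalent condition in Step 2 should read $\mathcal R\le \eps n(n-1)\rho^{n-2}|\nabla\rho|^2$; you flipped the sign of the right-hand side.

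The gap is the step you yourself flag as decisive. You give no argument that $\mathcal R$ has the required sign, and none can exist. There is no monotonicity of $\rho_{k,\eps}$ along $U$ to exploit. Already for $n=1$, where the diffusion remainder vanishes identically, the displayed inequality \eqref{eq:pressure_kn} is equivalent to
\[
\rho^{k-1}\bigl((k+1)\rho-k\bigr)\nabla\rho\cdot U\le 0 .
\]
Take $U\equiv e_1$. This fails at every point where $0<\rho<\frac{k}{k+1}$ and $\p_{x_1}\rho<0$. Such points exist because $\rho(t,\cdot)$ is positive for $t>0$ and integrable, so it must decrease in $x_1$ somewhere in the region where it is small. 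Hence the inequality with the term $n p_{k,\eps}$ is not valid at finite $k$. Neither a sign argument nor a rewriting of the transport coefficient will produce it.

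The missing idea is to stop trying to generate $n\nabla p_{k,\eps}$, because the transport coefficient is already an exact gradient:
\[
n\rho^{n-1}\bigl(1-(k+1)\rho^k\bigr)\nabla\rho=\nabla\Bigl[\rho^n-\tfrac{n(k+1)}{n+k}\rho^{n+k}\Bigr].
\]
For $n\ge1$, your Step 1 then gives, after discarding $-\eps n(n-1)\rho^{n-2}|\nabla\rho|^2\le 0$,
\[
\p_t\rho^n_{k,\eps}+\nabla\Bigl[\rho^n_{k,\eps}-\tfrac{n(k+1)}{n+k}\rho^n_{k,\eps}p_{k,\eps}\Bigr]\cdot U+n\bigl[\rho^n_{k,\eps}-\rho^{n+k}_{k,\eps}\bigr]\nabla\cdot U\le\eps\Delta\rho^n_{k,\eps}.
\]
This is exactly where the paper's own proof ends. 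The form with $np_{k,\eps}$ in the statement is only the limiting one. It is recovered in Lemma~\ref{thm:compatibility} from $\frac{n(k+1)}{n+k}\to n$ and $\rho^n_{k,\eps}p_{k,\eps}\overset{\ast}{\rightharpoonup}\rho^np=p$, and that is the only place the lemma is used.

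Your caveat about $0<n<1$ is also correct. The paper's proof tacitly needs $n\ge1$ to drop the diffusion remainder, which is harmless since $n\to+\infty$ afterwards.
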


\begin{proof}
    Multiplying equation \eqref{eq:RegParaBis} by $n\rho_{k,\eps}^{n-1}$, we obtain

    \[
n \rho^{n-1}_{k,\eps}\p_t \rho + n\rho^{n-1}(1-(k+1)\rho^k_{k,\eps})\nabla \rho \cdot U + n\rho^n_{k,\eps}(1-\rho^k_{k,\eps})(\nabla \cdot U) = \eps n\rho^{n-1}_{k,\eps}\Delta \rho_{k,\eps} .
    \]
    This yields 
    
    \[
    \p_t \rho^n_{k,\eps} + \nabla\!\left[\rho^n_{k,\eps}-\f{n(k+1)}{n+k}\rho^{n+k}_{k,\eps} \right]\cdot U +n\rho^n_{k,\eps}(1-\rho^k_{k,\eps})(\nabla \cdot U) = \eps\Delta \rho^n_{k,\eps} - \underbrace{\eps n(n-1)\rho^{n-2}_{k,\eps}|\nabla \rho_{k,\eps}|^2}_{\ge 0},
    \]
    and thus 

    \[
    \p_t \rho_{k,\eps}^n + \nabla \!\left[\rho_{k,\eps}^n-\f{n(k+1)}{n+k}\rho_{k,\eps}^np_{k,\eps}\right]\cdot U +n(\rho^n_{k,\eps}-\rho_{k,\eps}^{n+k})(\nabla \cdot U) \le \eps\Delta\rho_{k,\eps}^n.
    \]
\end{proof}

\begin{lemma}[Limit inequality for the pressure]
\label{thm:compatibility}
Denote by $p$ the $L^\infty(Q_T)$-weak $\ast$ limit of $p_{k,\eps}$ obtained by further extraction. Then
\begin{equation}
    \label{eq:limit_pressure_ineq}
    -\nabla p \cdot U + \!\left[\mathbf{1}_{\{\rho = 1\}}-p\right]\nabla \cdot U \le 0.
\end{equation}
This implies that $p$ possesses \textbf{directional regularity} in space, in the sense that there exists a positive Radon measure $\mu$ on $Q_T$ and $g \in L^{\infty}(Q_T)$ nonnegative such that
\[
\nabla p \cdot U =\mu - g.
\]
\end{lemma}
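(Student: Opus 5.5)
We start from inequality \eqref{eq:pressure_kn} with $n$ fixed and independent of $k$, and pass to the limit $\eps\to0$, $k\to\infty$ along the subsequence of Corollary~\ref{thm:conv_rho} and Corollary~\ref{thm:state&cont}. It is convenient to keep the term $\frac{n(k+1)}{n+k}\rho^n_{k,\eps}p_{k,\eps}$ from the proof of \eqref{eq:pressure_kn} rather than its simplification. The inequality holds classically, so it holds in $\mathcal D'(Q_T)$. We test it against a nonnegative $\varphi\in C^\infty_c((0,T)\times\R^d)$ and move all derivatives onto $\varphi$ and $U$. This gives
\[
-\int \rho^n_{k,\eps}\p_t\varphi-\int\Big[\rho^n_{k,\eps}-\tfrac{n(k+1)}{n+k}\rho^n_{k,\eps}p_{k,\eps}\Big]\nabla\cdot(\varphi U)+n\int(\rho^n_{k,\eps}-\rho^{n+k}_{k,\eps})(\nabla\cdot U)\varphi\le\eps\int\rho^n_{k,\eps}\Delta\varphi .
\]

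\textbf{Passing to the limit.} Since $\rho_{k,\eps}\to\rho$ strongly in $L^1_{loc}$ with $0\le\rho_{k,\eps}\le1$, we get $\rho^n_{k,\eps}\to\rho^n$ strongly in every $L^q_{loc}$ with $q<\infty$. For the products we use weak--strong limits, as in Corollary~\ref{thm:state&cont}. First, $\rho^n_{k,\eps}p_{k,\eps}=\rho^{n+k}_{k,\eps}\rightharpoonup\rho^n p$. Second, $\frac{n(k+1)}{n+k}\to n$. The right-hand side tends to $0$. In the limit we obtain, in $\mathcal D'$,
\[
\p_t\rho^n+\nabla[\rho^n-n\rho^np]\cdot U+n\rho^n(1-p)\nabla\cdot U\le0 .
\]
By the law of state, $p(1-\rho)=0$ with $0\le p\le 1$, so $\rho^n p=p$ almost everywhere. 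The inequality therefore becomes
\[
\p_t\rho^n+\nabla\rho^n\cdot U-n\nabla p\cdot U+n(\rho^n-p)\nabla\cdot U\le0 .
\]

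\textbf{Letting $n\to\infty$.} We divide by $n$. The terms $\frac1n\p_t\rho^n$ and $\frac1n\nabla\rho^n\cdot U$ tend to $0$ in $\mathcal D'$, because after integration by parts they are bounded by $\frac1n\|\rho^n\|_{L^1_{loc}}\|\varphi\|_{W^{1,\infty}}$. As $n\to\infty$, $\rho^n\to\mathbf 1_{\{\rho=1\}}$ almost everywhere and boundedly, so $\rho^n\nabla\cdot U\to\mathbf 1_{\{\rho=1\}}\nabla\cdot U$ in $L^1_{loc}$. This yields \eqref{eq:limit_pressure_ineq} in the distributional sense. The step most likely to need care is the handling of the product $\rho^{n+k}_{k,\eps}$ together with the identification $\rho^np=p$, since $p_{k,\eps}$ converges only weakly. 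It is exactly the fact that $n$ stays independent of $k$ that makes $\rho^n_{k,\eps}$ strongly convergent and the weak--strong argument valid. With $n=k$, as in \eqref{eq:pressure_k}, we would face the weak limit of $p_{k,\eps}^2$, which is not identified.

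\textbf{Directional regularity.} Set $T:=-\nabla p\cdot U+[\mathbf 1_{\{\rho=1\}}-p]\nabla\cdot U$. By \eqref{eq:limit_pressure_ineq}, $-T$ is a nonnegative distribution, hence a positive Radon measure $\mu$ on $Q_T$. We then write
\[
\nabla p\cdot U=\mu+[\mathbf 1_{\{\rho=1\}}-p]\nabla\cdot U .
\]
Under the standing hypothesis $D^SU\le-\alpha I_d\le0$ we have $\nabla\cdot U\le0$. Moreover $p\le1$ and $p=0$ on $\{\rho<1\}$, so $\mathbf 1_{\{\rho=1\}}-p\ge0$. Hence
\[
g:=-[\mathbf 1_{\{\rho=1\}}-p]\nabla\cdot U
\]
is nonnegative and bounded by $\|U\|_{W^{1,\infty}}$, so $g\in L^\infty(Q_T)$. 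This gives $\nabla p\cdot U=\mu-g$.
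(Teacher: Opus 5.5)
Your proof is correct and follows the paper's argument essentially step by step. You pass to the limit in \eqref{eq:pressure_kn} for fixed $n$, using weak--strong convergence of $\rho^n_{k,\eps}p_{k,\eps}$ and the identity $\rho^n p=p$, then divide by $n$ and let $n\to\infty$; keeping the factor $\frac{n(k+1)}{n+k}$ (which the statement of \eqref{eq:pressure_kn} drops) and justifying $g\ge 0$ via $\nabla\cdot U\le 0$ and $\mathbf{1}_{\{\rho=1\}}-p\ge 0$ fill small gaps the paper leaves implicit, with the only caveat that discarding $\eps n(n-1)\rho^{n-2}|\nabla\rho|^2$ requires $n\ge 1$.
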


\begin{proof}
Let $\rho_{k,\eps}$ solve \eqref{eq:RegParaBis}, and define $p_{k,\eps}=\rho_{k,\eps}^k$. Since $p_{k,\eps}$ is bounded in $L^\infty(Q_T)$, Corollary~\ref{thm:conv_rho} yields, up to a subsequence,
\begin{align*}
p_{k,\eps} &\overset{\ast}{\rightharpoonup} p 
\quad \text{in} \quad L^{\infty}(Q_T),\\
\rho_{k,\eps} &\to \rho 
\quad \text{in} \quad C([0,T],L^1_{loc}(\R^d)).
\end{align*}
Since $\rho_{k,\eps} \in[0,1]$, we also have that, as $k \to +\infty$, $\eps \to0$, 

\[
\rho_{k,\eps}^n {\longrightarrow} \rho^n \quad \text{in} \quad C([0,T],L^1_{loc}(\R^d)).
\]
By weak-strong limits, we get that 

\[
\rho_{k,\eps}^{n+k} = \rho_{k,\eps}^n p_{k,\eps} \underset{k \to + \infty,\eps \to 0}{\longrightarrow} \rho^np \quad \text{weak * in} \quad L^\infty(Q_T).
\]
However, since by Corollary \ref{thm:state&cont}, $\rho p=p$, we infer that 

\[
\rho^n p = p.
\]

Passing to the limit as $k\to+\infty$ and $\eps\to0$, we obtain in the sense of distributions that
\[
\p_t \rho^n + \nabla \!\left[\rho^n-np\right]\cdot U+n(\rho^n-p)(\nabla\cdot U) \le 0.
\]
Finally, dividing the last inequality by $n$ and taking $n \to +\infty$ gives, since $\rho^n \searrow \mathbf{1}_{\{\rho = 1\}}$

\[
-\nabla p \cdot U + [\mathbf{1}_{\{\rho=1\}}-p](\nabla \cdot U) \le0.
\]

As for the directional regularity of $p$, the previous inequality gives the existence of a non-negative Radon measure such that

\[
\nabla p\cdot U = \mu-g,
\]
where $g = -(\nabla \cdot U)[\mathbf{1}_{\{\rho=1\}}-p] \in L^\infty(Q_T)$.

\end{proof}

\begin{corollary}
\label{thm:boundary_condition}
Let $p$ be as in the previous lemma. Suppose that $-D^SU \ge \alpha I_d$ with $\alpha >0$, which implies that $p \in L^1([0,T];BV(\R^d))$.
If for all $t\in[0,T]$ the set $\Omega(t)$ is Lipschitz, then 
$p$ admits a well-defined trace on $\partial\Omega(t)$ 
for almost every $t$, and 

\[
p(t)=0 \quad \text{on} \quad\partial\Omega_f(t),
\]
for almost every $t$.
\end{corollary}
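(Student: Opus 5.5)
We will proceed in three steps: first obtain the BV regularity of $p$, then use it to define a trace on $\partial\Omega(t)$, and finally read off the sign of that trace from the directional inequality \eqref{eq:limit_pressure_ineq} of Lemma~\ref{thm:compatibility}.

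\textbf{Step 1: BV regularity of $p$.} By Lemma~\ref{thm:LemmaMultiD} with $\alpha>0$, $\rho_{k,\eps}^{k+1}=\rho_{k,\eps}p_{k,\eps}$ is bounded in $L^1([0,T];BV(\R^d))$. Up to extraction it converges weakly-$\ast$ to $\rho p$, by the weak-strong limit argument of Corollary~\ref{thm:state&cont}. By Corollary~\ref{thm:state&cont} we have $\rho p=p$. Lower semicontinuity of the total variation, applied to test vector fields $\varphi(t,x)$ with $|\varphi|\le 1$, gives $\int_0^T |Dp(t)|(\R^d)\,dt\le C(T)\|\rho^0\|_{BV}/\alpha$. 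Hence $p(t)\in BV(\R^d)$ for a.e.\ $t$.

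\textbf{Step 2: traces.} Fix such a $t$ for which $p(t)\in BV$. The inequality \eqref{eq:limit_pressure_ineq}, tested against functions of the form $\theta(t)\psi(x)$ with $\theta\ge0$, also holds for a.e.\ $t$ as an inequality between measures in $x$. This requires a separability argument over a countable dense family of $\psi$.

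\begin{itemize}
\item Since $\Omega(t)$ is Lipschitz, BV trace theory gives an interior trace $p^{\rm int}$ of $p(t)|_{\Omega(t)}$ on $\partial\Omega(t)$, defined $\mathcal H^{d-1}$-a.e.
\item Outside $\Omega(t)$ we have $p=0$ a.e. Indeed $p=0$ where $\rho<1$; the set $\Sigma(t)\setminus\Omega(t)$ could have positive measure, but there one can still argue via $\rho p=p$. A weak point is that we essentially need $p=0$ a.e.\ on $\R^d\setminus\Omega(t)$. This holds if $|\Sigma(t)\setminus\Omega(t)|=0$, which I would add or deduce from the Lipschitz assumption read as $\Sigma(t)$ coinciding a.e.\ with $\overline{\Omega(t)}$. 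Under this, the exterior trace is $0$.
\end{itemize}

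The BV jump formula then gives
\[
Dp(t)\llcorner\partial\Omega(t)=(0-p^{\rm int})\,n\,\mathcal H^{d-1}\llcorner\partial\Omega(t),
\]
with $n$ the outward normal.

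\textbf{Step 3: conclusion.} Take the inequality $-\nabla p\cdot U+[\mathbf 1_{\{\rho=1\}}-p]\nabla\cdot U\le 0$ at time $t$ and restrict it to $\partial\Omega(t)$. The second term is in $L^\infty$, so it does not charge the $\mathcal H^{d-1}$-set $\partial\Omega(t)$, which is Lebesgue-null. The singular part of $-\nabla p\cdot U$ on $\partial\Omega(t)$ is $p^{\rm int}\,(U\cdot n)\,\mathcal H^{d-1}$, since $U$ is continuous. This product with a BV measure is legitimate because $U\in W^{2,\infty}$. Therefore
\[
p^{\rm int}\,(U\cdot n)\le 0\quad \mathcal H^{d-1}\text{-a.e. on }\partial\Omega(t).
\]
On $\partial\Omega_f(t)$ we have $U\cdot n>0$ and $p^{\rm int}\ge0$, which forces $p^{\rm int}=0$.

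\textbf{Main obstacle.} I expect the main difficulty to be the rigorous restriction of the space-time distributional inequality to a.e.\ fixed time, combined with identifying the exterior value $p=0$ across $\partial\Omega(t)$. Both are handled by the slicing argument and the $\Sigma$ versus $\Omega$ measure-zero discussion in Step 2.
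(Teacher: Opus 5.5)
Your proposal is correct and follows essentially the same route as the paper. The steps coincide: $p(t)\in BV$ for a.e.\ $t$; the extension-by-zero trace formula $Dp(t)\llcorner\partial\Omega(t)=-p^{\rm int}\,n\,\mathcal H^{d-1}$ across the Lipschitz boundary; and nonnegativity of $\mu$ in $\nabla p\cdot U=\mu-g$ (with $g\in L^\infty$ not charging $\partial\Omega(t)$), which forces $p^{\rm int}(U\cdot n)\le 0$. The paper skips the lower-semicontinuity and time-slicing details and simply asserts that $p(t)$ vanishes off $\overline{\Omega(t)}$, so your reading of the hypothesis as $|\Sigma(t)\setminus\overline{\Omega(t)}|=0$ states explicitly an assumption the paper uses tacitly. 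Your aside that $\rho p=p$ could handle $\Sigma(t)\setminus\Omega(t)$ is vacuous, since $\rho=1$ there.
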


\begin{proof}
For almost every $t\in[0,T]$, $p(t)$ is a BV function. 
Hence it admits a well-defined trace on $\partial\Omega(t)$, denoted 
$p^{\Omega(t)}$. 
Since $p(t)$ vanishes in $\R^d\setminus\overline{\Omega(t)}$, 
its gradient as a measure satisfies
\[
\nabla p(t)
=
(\nabla p(t))\mathbf{1}_{\Omega(t)}\, d\mathcal{L}^d
-
p^{\Omega(t)} \, n_{\Omega(t)} 
\mathbf{1}_{\partial\Omega(t)} \mathcal{H}^{d-1}.
\]
See for instance Corollary 3.89 in \cite{ambrosio_functions_2000} for a complete proof, being careful about the fact that the convention in geometric measure theory is taking the \textit{inward normal}.
Taking the scalar product with $U$, we obtain
\[
\nabla p(t)\cdot U
=
(\nabla p(t)\cdot U)\mathbf{1}_{\Omega(t)}\, d\mathcal{L}^d
-
p^{\Omega(t)} (n_{\Omega(t)}\cdot U)
\mathbf{1}_{\partial\Omega(t)} \mathcal{H}^{d-1}.
\]
Since $\nabla p\cdot U = \mu-g$ and the measure $\mu$ is nonnegative, it follows that
$p^{\Omega(t)}$ must vanish on the set where 
$n_{\Omega(t)}\cdot U>0$, namely $\p \Omega_f(t)$.
\end{proof}

\subsection{Main theorem}

Altogether, we have proved our main result

\begin{theorem}[Stiff limit] 
\label{thm:main}
Assume that $D^SU \leq -\alpha Id$ for some $\alpha \geq 0$. There exists a couple $(\rho,p)$, with 
\[
\rho \in L^{\infty}([0,T], BV(\R^d)) \cap C([0,T],L^1_{loc}(\R^d)),
\qquad 
p \in L^{\infty}(Q_T),
\]
such that
\begin{equation}
\left\{
\begin{aligned}
 &\partial_t \rho + \nabla \cdot (\rho (1-p)U) = 0, \\
 &-\nabla p \cdot U +[\mathbf{1}_{\{\rho = 1\}}-p](\nabla \cdot U)  \le 0, \\
 &p(1-\rho) = 0, 
 \qquad 0 \le p \le 1, \qquad 0 \leq \rho \leq 1.
\end{aligned}
\right.
\end{equation}
\end{theorem}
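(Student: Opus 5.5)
The proof assembles the results already established. Fix $T>0$ and take a sequence $k\to+\infty$, $\eps\to0$ (with $\eps\le1$). Let $\rho_{k,\eps}$ be the smooth solutions of \eqref{eq:RegParaBis} given by Lemma~\ref{lm:Prel}, which satisfy $0\le\rho_{k,\eps}\le1$ and $\|\rho_{k,\eps}(t)\|_{L^1}=1$. Set $p_{k,\eps}=\rho_{k,\eps}^k\in[0,1]$.

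\textbf{Step 1 (uniform bounds).} Since $D^SU\le-\alpha I_d\le0$, Lemma~\ref{thm:LemmaMultiD} applies with $\alpha\ge0$. Its first estimate in \eqref{eq:BV} gives $\sup_{t\le T}\|\nabla\rho_{k,\eps}(t)\|_{L^1}\le C(T)\|\rho^0\|_{BV}$ uniformly in $k,\eps$. The second, pressure-type estimate is not needed here, so the case $\alpha=0$ is allowed.

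\textbf{Step 2 (compactness).} Corollary~\ref{thm:conv_rho} gives, along a subsequence, $\rho_{k,\eps}\to\rho$ in $C([0,T],L^1_{loc})$. Extracting further, $p_{k,\eps}\overset{\ast}{\rightharpoonup}p$ in $L^\infty(Q_T)$. For the regularity of $\rho$: the BV norm is lower semicontinuous under $L^1_{loc}$ convergence, so $\|\rho(t)\|_{BV}\le C(T)\|\rho^0\|_{BV}$ for every $t$. Hence $\rho\in L^\infty([0,T],BV)\cap C([0,T],L^1_{loc})$. The bounds $0\le\rho\le1$ hold because strong convergence preserves them a.e. The bounds $0\le p\le1$ hold because weak-$\ast$ limits preserve closed convex constraints: test against nonnegative $L^1$ functions.

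\textbf{Step 3 (equations).} Corollary~\ref{thm:state&cont} gives the continuity equation in $\mathcal D'$ and the law of state $p(1-\rho)=0$, using weak–strong products and the bound $\rho^k(1-\rho)\le 1/(k+1)$. Lemma~\ref{thm:compatibility} then gives the inequality $-\nabla p\cdot U+[\mathbf 1_{\{\rho=1\}}-p]\nabla\cdot U\le0$ in $\mathcal D'$ for the same limit pair.

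The only delicate point is consistency of the extractions. Corollary~\ref{thm:state&cont} and Lemma~\ref{thm:compatibility} must be applied to one common subsequence. This is immediate because each result only passes to the limit along an already-extracted subsequence; refining the subsequence does not change the limits. The main real obstacle is inside Lemma~\ref{thm:compatibility}. First, one needs the identification $\rho^n_{k,\eps}p_{k,\eps}\rightharpoonup\rho^np=p$, which follows from iterating $\rho p=p$. Second, one needs the passage $n\to\infty$: $\rho^n\searrow\mathbf 1_{\{\rho=1\}}$ in $L^1_{loc}$ by monotone convergence, so $\frac1n\partial_t\rho^n\to0$ and $\frac1n\nabla\rho^n\cdot U\to0$ in $\mathcal D'$. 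Both are already established, so the theorem follows.
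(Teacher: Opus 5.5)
Your proposal is correct and follows the paper's route. The paper likewise obtains the theorem by assembling Lemma~\ref{thm:LemmaMultiD}, Corollaries~\ref{thm:conv_rho} and~\ref{thm:state&cont}, and Lemma~\ref{thm:compatibility} along one common subsequence. Your additional details are sound and fill in what the paper leaves implicit: lower semicontinuity of the total variation giving $\rho\in L^\infty([0,T],BV)$, preservation of $0\le\rho\le1$ and $0\le p\le1$ under the limits, and the observation that only the first estimate in \eqref{eq:BV} is used, so $\alpha=0$ is allowed.
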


\begin{remark}
Theorem~\ref{thm:main} shows that, denoting by  \(\rho_{k,\varepsilon}\) the solution of \eqref{eq:RegParaBis} and \(p_{k,\varepsilon} = \rho_{k,\varepsilon}^k\), then, up to the extraction of a subsequence, the limit pair \((\rho,p)\) satisfies \eqref{eq:continuity} and \eqref{eq:state}. Under the additional assumption that \(\alpha > 0\), it also follows—combined with Corollary~\ref{thm:boundary_condition}—that the boundary condition \eqref{eq:boundary} is satisfied. Consequently, we obtain almost all the ingredients required to establish the existence of a solution to the PDE system \eqref{eq:FBF} when \(\alpha > 0\), with the exception of \eqref{eq:complementarity}, which remains to be proved. At present, however, we are only able to establish the weaker inequality \eqref{eq:limit_pressure_ineq}.

Nevertheless, as mentioned in the introduction, \eqref{eq:complementarity} formally follows from \eqref{eq:continuity}, provided additional regularity assumptions hold. More precisely, assume that \(\alpha > 0\), and let \(t_0 \in [0,T]\) be such that \(p(t_0) \in BV(\mathbb{R}^d)\). Assume further that, for every compact set \(K \subset\subset \Omega(t_0)\), there exists \(\delta > 0\) such that
\[
\rho \in C^\infty\big((t_0-\delta,t_0+\delta)\times \mathring{K}\big).
\]

Let \(x \in \Omega(t_0)\), and choose a compact set \(K\) such that \(x \in K \subset\subset \Omega(t_0)\). By assumption, there exists \(\delta > 0\) such that \(\rho, p \in C^\infty\big((t_0-\delta,t_0+\delta)\times \mathring{K}\big)\). Since \(\rho(t_0,x) = 1\) and \(\rho \leq 1\), it follows that \(t_0\) is a local maximum of the function \(t \mapsto \rho(t,x)\), and therefore \(\partial_t \rho(t_0,x) = 0\). Moreover, \(\rho\) satisfies \eqref{eq:continuity} in the sense of distributions, and thus in the classical sense on \((t_0-\delta,t_0+\delta)\times \mathring{K}\). Hence,
\[
\underbrace{\partial_t\rho(t_0,x)}_{=0}
+ \big(\underbrace{\nabla \rho(t_0,x)}_{=0}\cdot U(x)\big)(1-p(t_0,x))
+ \underbrace{\rho(t_0,x)}_{=1}
\nabla \cdot\big((1-p(t_0))U\big)(x)
= 0,
\]
which yields the desired result.
\end{remark}

\begin{remark}
It is not reasonable to assume that $\rho$ and $p$ are globally smooth for the previous remark, as spatial discontinuities are expected on the back part $\partial \Omega(t_0)\setminus \partial \Omega_f(t_0)$. 
However, even though this is currently beyond reach from a rigorous standpoint, we expect that the problem retains a hyperbolic nature in the limit, so that information propagates at finite speed. In particular, it is natural to conjecture that when $x$ lies in the interior of the saturated region $\Omega(t_0)$, then it remains saturated for $t$ in a neighborhood of $t_0$.
\end{remark}

\section{Qualitative properties, entropies and shocks}
\label{sec:entropies}

Thanks to Theorem~\ref{thm:main}, we have shown that, up to the extraction of a subsequence, the solution \(\rho_{k,\varepsilon}\) to \eqref{eq:RegParaBis} converges to a solution of \eqref{eq:FBF}, thereby providing a rigorous mathematical formulation of the limiting problem. Notice that the estimates established in Lemma~\ref{thm:LemmaMultiD} are uniform with respect to both \(k\) and \(\varepsilon\), so that the order in which the limits are taken does not affect the analysis. In other words, we obtain a new macroscopic model with hard congestion constraints as a limit of an LWR-like model with diffusion.

A natural question is then to investigate the behavior of the corresponding \textit{hyperbolic} model \eqref{eq:CL} as the parameter \(k \to +\infty\). This model takes the form of a nonhomogeneous scalar conservation law. The following section is devoted to the study of \eqref{eq:CL}: we begin with several remarks on the one-dimensional case \(d=1\) with constant velocity field \(U\), which falls within the scope of the classical theory. We then derive the entropy inequalities in the multidimensional and non-homegeneous case, and analyze their behavior in the limit \(k \to +\infty\). Finally, we discuss the PDE structure of \eqref{eq:FBF}, as well as issues related to its well-posedness.

\subsection{Behavior in the unidimensional and homogeneous case}

Equation \eqref{eq:CL}, in the particular case $d=1$ and $U=1$—or more generally when $U$ does not depend on space—fits perfectly into the classical framework of scalar conservation laws. In this setting, the convergence of solutions of \eqref{eq:RegPara} toward a solution of \eqref{eq:CL} as $\eps \to 0$ is classical. However, for equations of this type, a weak formulation alone is not sufficient to ensure uniqueness. 

We recall well-known results from conservation laws, applied in this particular case. Consider the Cauchy problem
\begin{equation}
\label{eq:Riemann}
\begin{cases}
    \p_t\rho + \p_x(F_k(\rho))=0, \\
    \rho(t=0)=\rho^0,
\end{cases}
\end{equation}
where 
\[
\rho^0(x) = \begin{cases}
    \rho^- \quad \text{for } x < 0, \\
    \rho^+ \quad \text{for } x>0,
\end{cases}
\]
also known as the \textit{Riemann problem}, \cite{dafermos_constantine_m_hyperbolic_2016, serre_systemes_1996}. 

Then, \eqref{eq:Riemann} admits various weak solutions, among which the shock wave
\begin{equation}
\label{eq:shock}
\rho(t,x) = \left\{
\begin{array}{ccc}
    \rho^- & \text{for} & x <\sigma_k t, \\
    \rho^+ & \text{for} & x>\sigma_k t,
\end{array}
\right.
\end{equation}
where $\sigma_k$ satisfies the Rankine-Hugoniot relation $\sigma_k = [F_k(\rho)]/[\rho]$, using the standard notation $[a] = a^+-a^-.$ (see Fig.~\ref{fig:shock_rarefaction}). 

In the case of a \textit{jump-down} ($\rho^->\rho^+$), another weak solution is the rarefaction wave
\begin{equation}
\label{eq:rarefaction}
\bar{\rho}(t,x) = \left\{
\begin{array}{ccc}
    \rho^- & \text{if} & x<F_k'(\rho^-)t, \\
    (F_k')^{-1}\!\left(\f{x}{t}\right) 
        & \text{if} & F_k'(\rho^-)t<x<F_k'(\rho^+)t, \\
    \rho^+ & \text{if} & x> F_k'(\rho^+)t,
\end{array}
\right.
\end{equation}
which is well-defined since, by strict concavity of $F_k$, the function $F_k'$ is injective and $F_k(\rho^-)<F_k(\rho^+)$.

\begin{figure}[t]
\centering
\includegraphics[width = 0.8\linewidth]{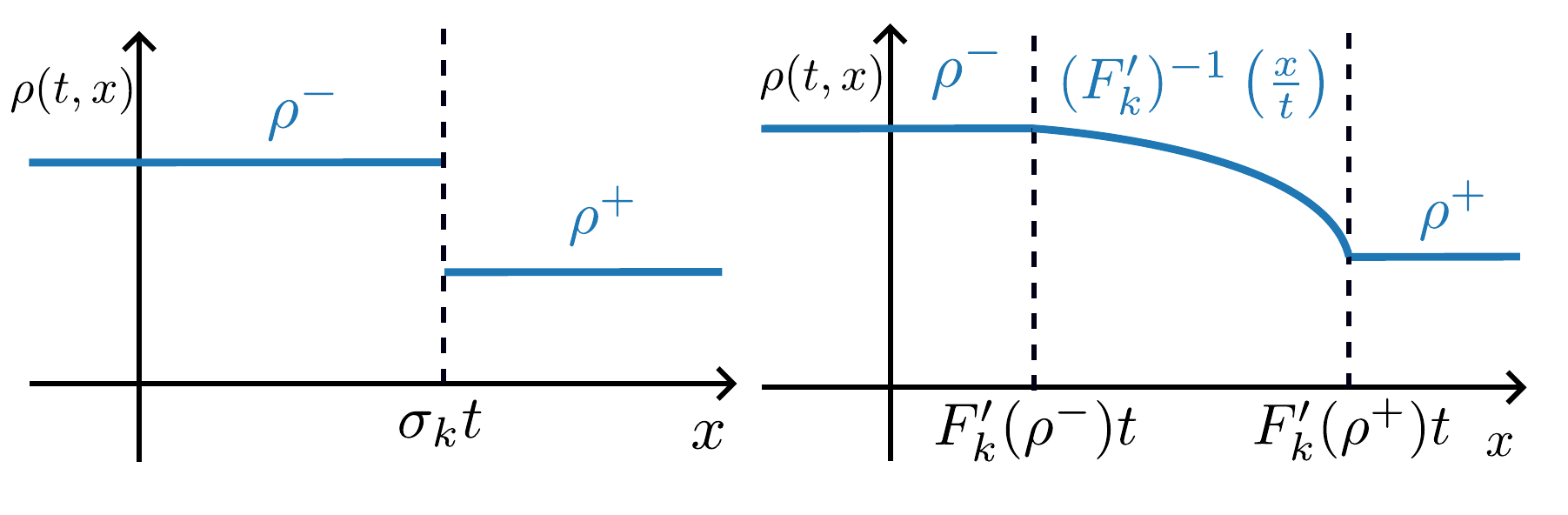}
\caption{Shock wave (left) and rarefaction wave (right)}
\label{fig:shock_rarefaction}
\end{figure}

To characterize the unique weak solution of \eqref{eq:CL} arising as the limit of solutions of \eqref{eq:RegPara}, one must supplement \eqref{eq:CL} with a family of differential inequalities, the so-called \textit{entropy inequalities} : for every $S$ convex and $Q_k$ such that $Q_k' = S'F_k'$

\begin{equation}
    \label{eq:entropy_1d}
    \p_t S(\rho) + \p_xQ_k(\rho) \le 0,
\end{equation}
and the solution is said to be entropic if it satisfies both \eqref{eq:Riemann} and \eqref{eq:entropy_1d}.

These inequalities act as a selection principle among weak solutions : in the case  $\rho^-<\rho^+$, then the shock wave \eqref{eq:shock} is entropic; and in the case $\rho^->\rho^+$, the rarefaction wave \eqref{eq:rarefaction} is entropic.

For the rest of the paper, we call  \textit{jump-ups} (resp. \textit{jump-downs}) shock waves \eqref{eq:shock} with $\rho^-<\rho^+$ (resp. $\rho^->\rho^+$). Since for finite $k$ \textit{jump-ups} are entropic while \textit{jump-downs} are not, one might expect this property to persist in the limit $k \to +\infty$, and hence for the free boundary formulation \eqref{eq:FBF}. More precisely, the need for entropy inequalities stems from the strict concavity of $F_k$. In the more general Lax–Oleinik framework, it is the nonlinearity of the flux that plays a decisive role. However, as $k\to+\infty$, the flux $F_k$ progressively flattens on $[0,1)$ (see Fig.~\ref{fig:flux}) in the sense that it approaches a linear function on each compact subset of $[0,1)$, and the nonlinearity disappears on this interval. This suggests that, in the limit $k\to+\infty$, equation \eqref{eq:FBF} behaves like a conservation law with a linear flux on $[0,1)$, meaning that \textit{jump-downs} between $1>\rho^->\rho^+$ could be admissible.

The case of a \textit{jump-down} with $\rho^-=1>\rho^+$ is more delicate. On the one hand, we could expect that such a discontinuity should not be admissible in the limit; on the other hand, as mentioned in the introduction, a shock propagating with speed $U(x^+)=1$ (see Fig.~\ref{fig:FBF1d}) appears to be the most natural solution of \eqref{eq:FBF}. Note also that for finite $k$, individuals located in the saturated region do not move, whereas in the limit, the entire saturated block in this example moves with speed $1$.

This phenomenon can be clarified by examining more closely the entropy solution of \eqref{eq:CL} in the case of a \textit{jump-down} with $\rho^-=1>\rho^+$. As discussed above, the unique entropy solution is the rarefaction wave \eqref{eq:rarefaction} with $\rho^-=1$. In this situation, $F_k'(\rho^-)= -k$, while for large $k$, $F_k'(\rho^+)\simeq 1$. 

Consider for instance the initial datum
\[
\rho^0 = \mathbf{1}_{[0, 1/2]} 
+ \f{1}{2}\mathbf{1}_{[1/2,1]}.
\]
Then (see Fig.~\ref{fig:rarefaction}), from the front of the saturated region ($x=1/2$), the rarefaction wave propagates in both directions: forward with velocity $1$, and backward with velocity $k$. For large $k$, the backward front reaches the rear of the saturated region almost instantaneously (at time $\tau=1/(2k)$), forcing the density to decrease and thereby allowing individuals to move again. 

At the second discontinuity located at $x=1$, a rarefaction wave also forms, but its two characteristic speeds, $1-(k+1)/2^k$ and $1$, are very close for large $k$.

The next paragraph is devoted to a more rigorous analysis of entropy solutions and admissible discontinuities for \eqref{eq:CL}, including the two-dimensional and non-homogeneous setting.

\begin{figure}[t]
\center
\includegraphics[scale = 0.5]{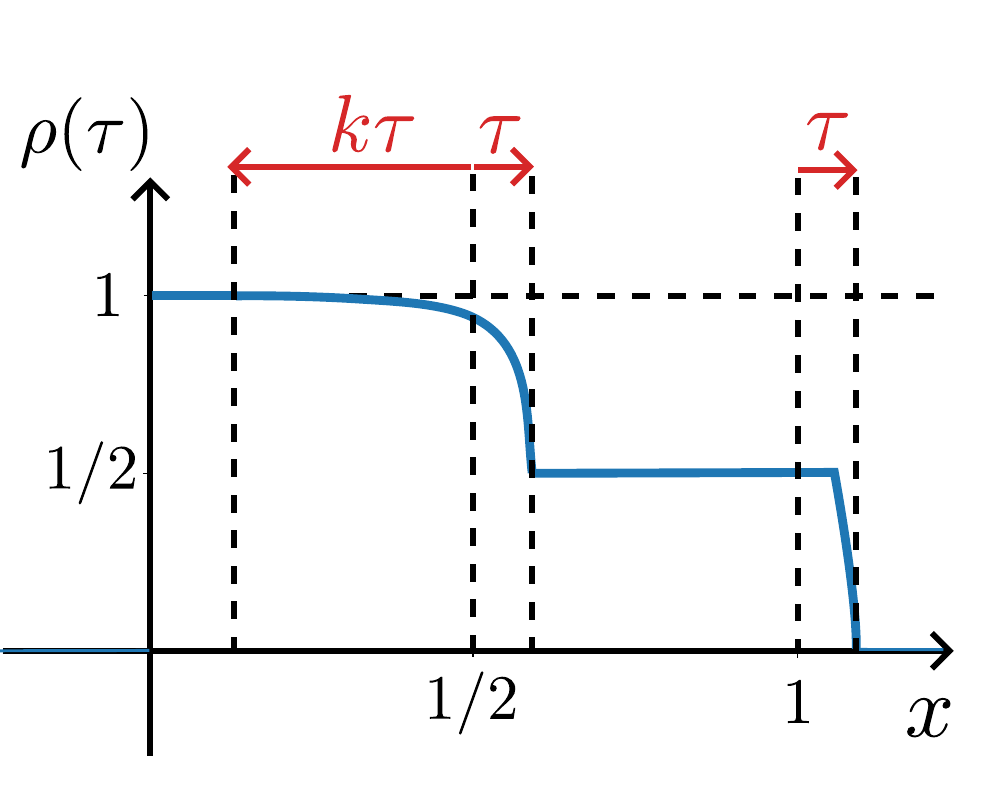}
\caption{Rarefaction wave}
\label{fig:rarefaction}
\end{figure}

\subsection{Entropies and discontinuities for finite $k$}

In the classical theory of conservation laws, entropy inequalities are a collection of differential inequalities that characterize which weak solution of \eqref{eq:CL} corresponds to the limit of solutions of the parabolic regularization \eqref{eq:RegPara}. The main theory being often presented for equations in dimension $1$ with a homogeneous flux, we establish those inequalities in a larger framework, adapted to our model.

\begin{proposition}[Entropies for finite $k$]
    Denote $\rho_{k,\eps}$ the solution of \eqref{eq:RegParaBis}. Then, for every $k > 0$, $\rho_{k,\eps} \underset{\eps \to 0}{\longrightarrow}\rho_k$ in $C([0,T],L^1_{loc}(\R^d))$. Moreover, for every entropy-flux pair $(S,Q_k)$, with $S$ a continuous convex function and $Q_k$ such that $Q_k' = F_k' S'$, $\rho_k$ satisfies in the distributional sense 

\begin{equation}
\label{eq:entropy_k}
    \p_t S(\rho_k) + \nabla \cdot (Q_k(\rho_k)U) + (S'(\rho)F_k(\rho_k)-Q_k(\rho_k))\nabla \cdot U \le 0.
\end{equation}
It is true in particular for the so called \textit{Kruzkhov entropies} $S(\rho) = |\rho-c|$, for which $Q_k(\rho) = (F_k(\rho)-F_k(c))\sgn(\rho-c)$, and the previous inequalities write 

\begin{equation}
    \label{eq:entropy_k_kruz}
    \p_t|\rho_k-c| + \nabla\cdot((F_k(\rho_k)-F_k(c))\sgn(\rho_k-c)U) \le - F_k(c)\sgn(\rho_k-c)\nabla \cdot U
\end{equation}
\end{proposition}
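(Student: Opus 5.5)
The proof has two parts: compactness in $\eps$ at fixed $k$, and a computation of the entropy inequality for the parabolic problem which then passes to the limit.

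\emph{Convergence as $\eps\to0$.} Fix $k$. The estimates of Lemma~\ref{thm:LemmaMultiD} and the $W^{-1,1}$ bound on $\p_t\rho_{k,\eps}$ from Corollary~\ref{thm:conv_rho} are uniform in $\eps\le1$. We can therefore extract a subsequence $\rho_{k,\eps}\to\rho_k$ in $C([0,T],L^1_{loc}(\R^d))$. The BV estimate needs $D^SU\le 0$. Without that sign one can still get an $\eps$-uniform, $k$-dependent BV bound, because $|F_k'|\le k$ and Gronwall then gives a constant $e^{CkT}$. This suffices here since $k$ is fixed.

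Convergence of the whole family, not just a subsequence, follows from uniqueness of entropy solutions. This is Kruzhkov's theorem for fluxes $F_k(\rho)U(x)$ with $U\in W^{2,\infty}$, which we invoke rather than reprove. To use it, we must know that every limit point satisfies the Kruzhkov inequalities \eqref{eq:entropy_k_kruz}; that is what the next step provides.

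\emph{Entropy inequality for smooth convex $S$.} Take $S\in C^2$ convex and $\rho=\rho_{k,\eps}$, which is smooth by Lemma~\ref{lm:Prel}. Multiply \eqref{eq:RegParaBis} by $S'(\rho)$ and expand the divergence:
\[
S'(\rho)\nabla\cdot(F_k(\rho)U)=S'(\rho)F_k'(\rho)\nabla\rho\cdot U+S'(\rho)F_k(\rho)\nabla\cdot U .
\]
The first term equals $\nabla Q_k(\rho)\cdot U=\nabla\cdot(Q_k(\rho)U)-Q_k(\rho)\nabla\cdot U$. On the right-hand side,
\[
\eps S'(\rho)\Delta\rho=\eps\Delta S(\rho)-\eps S''(\rho)|\nabla\rho|^2\le\eps\Delta S(\rho).
\]
Combining these gives
\[
\p_tS(\rho)+\nabla\cdot(Q_k(\rho)U)+(S'(\rho)F_k(\rho)-Q_k(\rho))\nabla\cdot U\le\eps\Delta S(\rho)
\]
pointwise. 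Test against a nonnegative $\varphi\in C_c^\infty((0,T)\times\R^d)$ and let $\eps\to0$. Since $S,Q_k,S'F_k$ are continuous on $[0,1]$ and $\rho_{k,\eps}\to\rho_k$ strongly in $L^1_{loc}$ with values in $[0,1]$, every nonlinear term converges. The viscous term $\eps\int S(\rho)\Delta\varphi$ tends to $0$. This yields \eqref{eq:entropy_k}.

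\emph{Extension to continuous convex $S$, including Kruzhkov.} For a merely continuous convex $S$ on $[0,1]$, this is the main technical step. We approximate $S$ by mollifications $S_\delta$, which are $C^\infty$ and convex, with $S_\delta\to S$ uniformly on $[0,1]$. For the Kruzhkov entropy $S=|\cdot-c|$ we take instead $S_\delta(\rho)=\sqrt{(\rho-c)^2+\delta^2}$. Normalise the fluxes so that $Q_{k,\delta}(c)=0$ in the Kruzhkov case.

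The point needing care is the combination $S'F_k-Q_k$, since $S'$ is only BV and discontinuous at $c$. Here $S_\delta'\to\sgn(\cdot-c)$ pointwise and boundedly, so $S_\delta'(\rho_k)\to\sgn(\rho_k-c)$ almost everywhere. Dominated convergence then applies, with the convention $\sgn(0)=0$. On $\{\rho_k=c\}$ both sides of the limit vanish: there $Q_k=0$, and $S'$ appears with value $0$. Likewise $Q_{k,\delta}\to(F_k(\cdot)-F_k(c))\sgn(\cdot-c)$. Passing $\delta\to0$ in the distributional inequality gives
\[
S'F_k-Q_k=\sgn(\rho_k-c)F_k(\rho_k)-(F_k(\rho_k)-F_k(c))\sgn(\rho_k-c)=F_k(c)\sgn(\rho_k-c),
\]
which is exactly \eqref{eq:entropy_k_kruz}.

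For a general continuous convex $S$ the same dominated-convergence argument applies. One can also write $S$ as an affine function plus a superposition of Kruzhkov entropies, $S(\rho)=a+b\rho+\int|\rho-c|\,d\mu(c)$ with $\mu=S''\ge0$, and integrate the Kruzhkov inequalities. The affine part satisfies equality, because $\rho_k$ solves \eqref{eq:CL} weakly.
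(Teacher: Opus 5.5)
Your proposal is correct and follows the paper's proof. You multiply \eqref{eq:RegParaBis} by $S'(\rho_{k,\eps})$, drop $-\eps S''(\rho)|\nabla\rho|^2\le 0$ by convexity, pass to the limit $\eps\to0$ using strong compactness at fixed $k$, and approximate $|\rho-c|$ by $\sqrt{(\rho-c)^2+\delta^2}$, the same regularization the paper uses later for the limit entropies. Your additions correctly fill in what the paper dismisses as classical: the $\eps$-uniform, $k$-dependent BV bound when $D^SU$ has no sign, Kruzhkov uniqueness to get convergence of the whole family, and the $\delta\to0$ limit using $S_\delta'(c)=0=\sgn(0)$. The only slips are cosmetic: the bound is $|F_k'|\le\max(1,k)$ rather than $k$ when $k<1$, and the superposition should read $S(\rho)=a+b\rho+\tfrac12\int|\rho-c|\,dS''(c)$.
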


\begin{proof}
    Take $S$ a smooth convex function, and $Q_k$ an associated flux as above. Then, multiplying equation \eqref{eq:RegParaBis} by $S'(\rho_{k,\eps})$, and noting $\rho_{k,\eps}$ by $\rho$ for clarity, on gets 

    \[
    \p_t S(\rho) + Q_k'(\rho) \nabla \rho \cdot U + S'(\rho)F_k(\rho) \nabla \cdot U = \varepsilon S'(\rho) \Delta \rho.
    \]
    Computing $\Delta S(\rho)$ gives 

    \[
    \Delta S(\rho) = \nabla \cdot (S'(\rho)\nabla \rho) = S'(\rho)\Delta \rho + S''(\rho) \vert \nabla \rho \vert ^2,
    \]
    which yields
    \[
    \p_t S(\rho) + Q_k'(\rho) \nabla \rho \cdot U + S'(\rho)F_k(\rho) \nabla \cdot U = \varepsilon \Delta S(\rho) - \varepsilon S''(\rho)\vert \nabla \rho \vert^2,
    \]
    and thus, $S$ being convex 
    \[
    \p_t S(\rho) + Q_k'(\rho) \nabla \rho \cdot U + S'(\rho)F_k(\rho) \nabla \cdot U \le \varepsilon \Delta S(\rho),
    \]
    or equivalently 
    \[
    \p_t S(\rho) + \nabla \cdot(Q_k(\rho)U) + (S'(\rho)F_k(\rho)-Q_k(\rho))\nabla \cdot U \le \eps \Delta S(\rho).
    \]
    Then, taking to the limit $\eps \to 0$ and generalizing to $S$ continuous is classical (see \cite{serre_systemes_1996} \cite{malek_weak_1996}).
\end{proof}

Now, restricting ourselves to dimension $d = 1$, one can establish the standard Rankine-Hugoniot conditions that describe the evolution of discontinuties.

\begin{proposition}(Rankine-Hugoniot for finite $k$)
\label{prop:RHk}
    Let $\rho_k(t,x) = \rho_k^-(x,t)\mathbf{1}_{x\le x(t)}+\rho_k^+(x,t) \mathbf{1}_{x>x(t)}$, were $\rho_k^-,\rho_k^+, x$ are smooth functions. Suppose that $\rho_k$ satisfies equation \eqref{eq:CL} in the distributional sense. Note $[\rho](t) = \rho_k^+(t,x(t))-\rho_k^-(t,x(t))$ and $[F_k(\rho)](t) = F_k(\rho_k^+(t,x(t)))-F_k(\rho_k^+(t,x(t)))$. Then
    \begin{equation}
        \label{eq:RH}
        x'(t)= \frac{[F_k(\rho)](t)}{[\rho](t)}U(x(t)).
    \end{equation} 
    In the particular case of a discontinuity between a saturated region and an uncongested one, say $\rho_k^+ = 1$, one gets :
    \begin{equation}
        \label{eq:RHSat}
        x'(t) = -\frac{F_k(\rho_k^-(t,x(t))}{1-\rho_k^-(t,x(t))}U(x(t)).
    \end{equation}
\end{proposition}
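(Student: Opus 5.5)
The plan is to test the distributional formulation of \eqref{eq:CL} against a smooth compactly supported function $\varphi(t,x)$ and to split the space integral at the discontinuity curve $x(t)$. In $d=1$ the weak formulation reads
\[
\int_0^T\!\!\int_{\R} \rho_k\,\p_t\varphi + F_k(\rho_k)U\,\p_x\varphi \,dx\,dt = 0
\qquad \text{for all } \varphi\in C_c^\infty((0,T)\times\R).
\]

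On each of the two regions $\{x<x(t)\}$ and $\{x>x(t)\}$ the functions $\rho_k^\pm$ are smooth. Taking $\varphi$ supported inside one region, I first deduce that $\p_t\rho_k^\pm+\p_x(F_k(\rho_k^\pm)U)=0$ holds classically there.

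Next, for a general $\varphi$, I integrate by parts on each side of the curve. The interior terms vanish by the classical equations. Only the boundary contributions on the curve remain. The outward space-time normal of $\{x<x(t)\}$ is proportional to $(-x'(t),1)$, and the normal of the other side is the opposite. Collecting the boundary terms gives
\[
\int_0^T \varphi(t,x(t))\Big( -x'(t)\,[\rho](t) + [F_k(\rho)](t)\,U(x(t))\Big)\,dt = 0 .
\]
Since $\varphi(t,x(t))$ is arbitrary, the bracket vanishes for all $t$ by continuity. Dividing by $[\rho]\neq0$, which is implicit in calling $x(t)$ a discontinuity, yields \eqref{eq:RH}.

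An equivalent route is to compute $\p_t\rho_k$ and $\p_x(F_k(\rho_k)U)$ as distributions. The jump parts produce $-x'(t)[\rho]\,\delta_{x=x(t)}$ and $[F_k(\rho)]U(x(t))\,\delta_{x=x(t)}$. Here $U$ is continuous, so it carries no jump and only the jump of $F_k(\rho_k)$ appears. The sum of these two singular parts must vanish.

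For \eqref{eq:RHSat}, I plug $\rho_k^+=1$ into \eqref{eq:RH}. Since $F_k(1)=0$, the flux jump is $[F_k(\rho)]=-F_k(\rho_k^-)$ and the density jump is $[\rho]=1-\rho_k^-$, which gives the formula directly. Note that the statement's definition of $[F_k(\rho)]$ contains a typo ($\rho_k^+$ appears twice); I use $F_k(\rho_k^+)-F_k(\rho_k^-)$.

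The main point needing care is the boundary-term bookkeeping: the orientation of the normal and the sign of $x'$. I would check the result against the homogeneous case $U\equiv1$, where it must reduce to $\sigma_k=[F_k]/[\rho]$ as stated earlier. One also needs $\rho_k^\pm$ smooth up to the curve, so that the traces exist; this is part of the hypotheses.
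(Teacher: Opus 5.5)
Your proposal is correct and is essentially the paper's argument: the paper computes $\p_t\rho_k$ and $\p_x(F_k(\rho_k)U)$ as distributions and identifies the coefficients of $\delta_{x(t)}$. That is exactly your ``equivalent route'', and your test-function/Green's-formula computation is just its rigorous form. Your bookkeeping ($-x'(t)[\rho]\,\delta_{x(t)}$ from the time derivative, $[F_k(\rho)]\,U(x(t))\,\delta_{x(t)}$ from the flux) is the correct one, whereas the paper's display writes $+x'(t)[\rho]\,\delta_{x(t)}$ and omits the factor $U(x(t))$, although its final formula \eqref{eq:RH} agrees with yours.
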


\begin{proof}
    We may compute, in the distributional sense, 
    \[
    \p_t\rho_k = \p_t\rho_k^- \mathbf{1}_{x\le x(t)} +\p_t\rho_k^+\mathbf{1}_{x>x(t)} + x'(t)[\rho](t)\delta_{x(t)},
    \]
    and notice that
    \[
    \p_x(F_k(\rho)U) = \p_x(F_k(\rho_k^-)U)\mathbf{1}_{x \le x(t)} + \p_x(F_k(\rho_k^+)U)\mathbf{1}_{x >x(t)} + [F_k(\rho)](t)\delta_{x(t)}.
    \]
    Since $\rho_k$ satisfies \eqref{eq:CL}, this gives the desired equation on $x'(t)$.
\end{proof}

\begin{remark}
Equation \eqref{eq:RHSat} illustrates the behavior in the presence of a jam ahead. Since the constant state $1$ is a solution of \eqref{eq:CL}, individuals inside the jam do not move. Whenever the density behind the jam is nonzero, the rear of the jam propagates backward with velocity
\[
x'(t) = -\frac{F_k(\rho_k^-(t,x(t)))}{1-\rho_k^-(t,x(t))}\,U(x(t)).
\]
As $\rho_k^-(t,x(t))$ approaches $1$, this velocity tends to $-k\,U(x(t))$.
\end{remark}

\begin{proposition}[Entropic shocks for finite $k$]
\label{thm:entropic_shocks_k}
Let $\rho_k$ be as in Proposition~\ref{prop:RHk}, and assume that it satisfies both the conservation law \eqref{eq:CL} and the entropy inequalities \eqref{eq:entropy_k}. Then,it holds
\[
(\rho^+ - \rho^-)U(x(t)) \ge 0.
\]
\end{proposition}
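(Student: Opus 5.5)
We plan to test the entropy inequality \eqref{eq:entropy_k} on the piecewise smooth solution of Proposition~\ref{prop:RHk}, using Kruzhkov entropies $S(\rho)=|\rho-c|$ with $c$ strictly between $\rho^-$ and $\rho^+$. The strategy is to extract the singular (Dirac) part at $x(t)$. As in the proof of Proposition~\ref{prop:RHk}, for any entropy-flux pair the distribution $\p_t S(\rho_k)+\p_x(Q_k(\rho_k)U)$ splits into a regular part and a singular part. The regular part is an $L^\infty$ function on each side, and it satisfies the smooth entropy equality there, since $\rho_k^\pm$ are smooth classical solutions. The singular part is
\[
\big(-x'(t)[S(\rho)]+[Q_k(\rho)]U(x(t))\big)\delta_{x(t)}.
\]
The source term $(S'F_k-Q_k)\p_xU$ is bounded, so it carries no Dirac mass. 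Hence \eqref{eq:entropy_k} forces
\[
-x'[S]+[Q_k]U\le 0
\]
at every time.

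Next we substitute the Rankine–Hugoniot relation \eqref{eq:RH}, namely $x'=\frac{[F_k]}{[\rho]}U$, and factor out $U(x(t))$:
\[
U(x(t))\Big([Q_k]-\frac{[F_k]}{[\rho]}[S]\Big)\le 0.
\]
We take $S(\rho)=|\rho-c|$ and $Q_k=(F_k(\rho)-F_k(c))\sgn(\rho-c)$. Set $s=\sgn(\rho^+-\rho^-)$, so that $\rho^+-c$ has sign $s$ and $\rho^--c$ has sign $-s$. Then
\[
[S]=s[\rho],\qquad [Q_k]=s\big(F_k(\rho^+)+F_k(\rho^-)-2F_k(c)\big).
\]
After simplification the bracket becomes
\[
2s\Big(\frac{F_k(\rho^+)+F_k(\rho^-)}{2}-F_k(c)\Big)-s\big(F_k(\rho^+)-F_k(\rho^-)\big)\quad\text{(up to the factor 1)},
\]
that is,
\[
2s\big(F_k(\rho^-)-F_k(c)\big)\;+\;s\,[\rho]\Big(\tfrac{F_k(\rho^+)-F_k(\rho^-)}{[\rho]}-\tfrac{F_k(\rho^+)-F_k(\rho^-)}{[\rho]}\Big),
\]
which I will redo carefully. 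The clean form should be
\[
2s\Big(\ell(c)-F_k(c)\Big),
\]
where $\ell$ is the chord of $F_k$ joining $(\rho^-,F_k(\rho^-))$ to $(\rho^+,F_k(\rho^+))$.

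We then use concavity of $F_k$ on $[0,1]$ (its second derivative is $-k(k+1)\rho^{k-1}\le0$). This gives $F_k(c)\ge\ell(c)$, with strict inequality for $c$ in the open interval. The bracket is therefore $-2s(F_k(c)-\ell(c))$, which has the sign of $-s$. The inequality $U\cdot(\text{bracket})\le0$ then reads $sU(x(t))\,(F_k(c)-\ell(c))\ge0$, hence $sU(x(t))\ge0$. Multiplying by $|[\rho]|$ gives $(\rho^+-\rho^-)U(x(t))\ge0$. If $\rho^+=\rho^-$ there is nothing to prove.

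The main obstacle is bookkeeping rather than ideas: getting the chord identity $[Q_k]-\frac{[F_k]}{[\rho]}[S]=-2s(F_k(c)-\ell(c))$ exactly right with the signs. Before fixing conventions I will check it on the simplest case, $\rho^-<c<\rho^+$. A secondary point is to justify that the zero-order term $(S'F_k-Q_k)\p_xU$ has no singular part and that Dirac masses must vanish separately; this is routine because $\rho_k$ and $U$ are bounded.
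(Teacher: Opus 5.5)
Your argument is the paper's own proof. You use the Kruzhkov entropy $|\rho-c|$ with $c$ strictly between $\rho^-$ and $\rho^+$, read off the coefficient of $\delta_{x(t)}$ (namely $-x'[S]+[Q_k]U\le 0$), substitute \eqref{eq:RH}, and conclude by strict concavity of $F_k$. Your chord $\ell$ and sign $s$ package the paper's two slope inequalities, and both orientations of the jump, into one line.

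One thing needs fixing: your value of $[S]$. In fact $[S]=|\rho^+-c|-|\rho^--c|=s(\rho^++\rho^--2c)$, not $s[\rho]$. The wrong value is exactly what produced the sign-indefinite term $2s\big(F_k(\rho^-)-F_k(c)\big)$ in your first attempt.

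With the correct $[S]$, your clean identity follows at once. Since $\ell$ is affine with slope $[F_k]/[\rho]$ and $\ell(\rho^\pm)=F_k(\rho^\pm)$, you get $F_k(\rho^+)+F_k(\rho^-)-\frac{[F_k]}{[\rho]}(\rho^++\rho^--2c)=2\ell(c)$. Hence $[Q_k]-\frac{[F_k]}{[\rho]}[S]=2s\big(\ell(c)-F_k(c)\big)$, and the rest of your argument goes through.
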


\begin{proof}
Assume first that $\rho^- < \rho^+$ and take $\rho^- < c < \rho^+$.  
In this case, the entropy inequalities \eqref{eq:entropy_k_kruz} read
\[
\partial_t|\rho_k-c| 
+ \partial_x\!\big((F_k(\rho_k)-F_k(c))\sgn(\rho_k-c)\,U\big)
\le - F_k(c)\sgn(\rho_k-c)\,\partial_x U.
\]

Since $c$ lies between $\rho_k^-$ and $\rho_k^+$, we obtain locally around $x(t)$
\[
\partial_t|\rho_k-c|
= (-\partial_t\rho_k^-)\mathbf{1}_{x\le x(t)}
+ (\partial_t\rho_k^+)\mathbf{1}_{x>x(t)}
+ x'(2c-\rho_k^- -\rho_k^+)\delta_{x(t)},
\]
and
\[
\partial_x(Q_k(\rho_k)U)
= -\partial_x(Q_k(\rho_k^-)U)\mathbf{1}_{x\le x(t)}
+ \partial_x(Q_k(\rho_k^+)U)\mathbf{1}_{x>x(t)}
- [2F_k(c)-F_k(\rho_k^-)-F_k(\rho_k^+)]U(x(t))\delta_{x(t)},
\]
while
\[
- F_k(c)\sgn(\rho_k-c)\partial_x U
= F_k(c)(\partial_x U)\mathbf{1}_{x\le x(t)}
- F_k(c)(\partial_x U)\mathbf{1}_{x>x(t)}.
\]

The relevant contribution is the coefficient of the Dirac mass $\delta_{x(t)}$, which yields
\[
x'(2c-\rho_k^- -\rho_k^+)
- (2F_k(c)-F_k(\rho_k^-)-F_k(\rho_k^+))U(x(t))
\le 0.
\]

Since $\rho_k$ satisfies \eqref{eq:CL}, the velocity $x'$ fulfills the Rankine–Hugoniot condition \eqref{eq:RH}, namely
\[
x' = \frac{F_k(\rho_k^+)-F_k(\rho_k^-)}{\rho_k^+ - \rho_k^-}U(x(t)).
\]
Substituting this expression gives
\[
\frac{F_k(\rho_k^+)-F_k(\rho_k^-)}{\rho_k^+ - \rho_k^-}U(x(t))
\big[(c-\rho_k^-)-(\rho_k^+-c)\big]
- \big[(F_k(c)-F_k(\rho_k^-))-(F_k(\rho_k^+)-F_k(c))\big]U(x(t))
\le 0.
\]

Because $F_k$ is strictly concave and $\rho_k^- < c < \rho_k^+$,
\[
\frac{F_k(\rho_k^+)-F_k(c)}{\rho_k^+-c}
<
\frac{F_k(\rho_k^+)-F_k(\rho_k^-)}{\rho_k^+-\rho_k^-}
<
\frac{F_k(c)-F_k(\rho_k^-)}{c-\rho_k^-}.
\]
Using the notation $[a]=a^+-a^-$, this is equivalent to
\[
\begin{aligned}
F_k(\rho_k^+)-F_k(c)
&< \frac{[F_k(\rho_k)]}{[\rho]}(\rho_k^+-c),\\
F_k(c)-F_k(\rho_k^-)
&> \frac{[F_k(\rho_k)]}{[\rho]}(c-\rho_k^-).
\end{aligned}
\]

Hence
\[
U(x(t))\!\left[
\underbrace{\left(\frac{[F_k(\rho_k)]}{[\rho]}(c-\rho_k^-)
- (F_k(c)-F_k(\rho_k^-))\right)}_{<0}
+
\underbrace{\left((F_k(\rho_k^+)-F_k(c))
- \frac{[F_k(\rho_k)]}{[\rho]}(\rho_k^+-c)\right)}_{<0}
\right]
\le 0,
\]
which holds if and only if $U(x(t)) \ge 0$.

A similar computation in the case $\rho_k^- > \rho_k^+$ leads to $U(x(t)) \le 0$. This proves the stated property.
\end{proof}

\begin{remark}
This means that entropic shocks are precisely those that \textit{increase} in the direction of the velocity field $U$.
\end{remark}

\subsection{Entropies and discontinuities at the limit $k \to + \infty$}

\begin{proposition}[Entropies for the hard congestion hyperbolic problem]
Let $\rho_{k,\eps}$ be the unique solution of \eqref{eq:RegParaBis}.  
From the estimates \eqref{eq:BV} we have, up to a subsequence,
\[
\rho_{k,\eps} \to \rho \quad \text{in } C([0,T],L^1_{\mathrm{loc}}(\mathbb{R}^d)),
\]
and $\rho_{k,\eps}^k=p_{k,\eps} \overset{*}{\rightharpoonup} p$.  
Then, for all $S$ smooth and convex, the pair $(\rho,p)$ satisfies in the distributional sense

\begin{equation}
\label{eq:entropy_limit_gen}
    \p_t S(\rho)+\nabla \cdot [(S(\rho)-S'(\rho)p)U]\le  [S(\rho)-S'(\rho)\rho]\nabla \cdot U.
\end{equation}

This inequality remains true for the so-called \textbf{Kruzkhov entropies} : for all $c\in[0,1]$, the pair $(\rho,p)$ satisfies in the distributional sense
\begin{equation}
\label{eq:entropy_lim_kruz}
\partial_t |\rho-c|
+ \nabla\cdot\big((|\rho-c|-\sgn(\rho-c)p)U\big)
\le -c\,\sgn(\rho-c)\nabla\cdot U .
\end{equation}
\end{proposition}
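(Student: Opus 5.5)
We start from the parabolic entropy inequality obtained in the proof of the entropy proposition for finite $k$, namely, for $S$ smooth and convex and $\rho=\rho_{k,\eps}$,
\[
\p_t S(\rho) + \nabla\cdot(Q_k(\rho)U) + (S'(\rho)F_k(\rho)-Q_k(\rho))\nabla\cdot U \le \eps\Delta S(\rho),
\]
with the normalization $Q_k(\rho)=\int_0^\rho S'(s)F_k'(s)\,ds$. The goal is to rewrite $Q_k$ so that every nonlinear term in $\rho$ either converges strongly (thanks to Corollary~\ref{thm:conv_rho}) or is linear in $p_{k,\eps}$ times a strongly converging factor, exactly as in the proof of Lemma~\ref{thm:compatibility}.

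First, we integrate by parts in $Q_k$. Since $F_k'(s)=1-(k+1)s^k$, we get
\[
Q_k(\rho)=S(\rho)-S(0)-\int_0^\rho (k+1)s^kS'(s)\,ds = S(\rho)-S(0)-S'(\rho)\rho^{k+1}+\int_0^\rho s^{k+1}S''(s)\,ds .
\]
The remainder $R_k(\rho)=\int_0^\rho s^{k+1}S''(s)ds$ is bounded by $\|S''\|_\infty/(k+2)$ and tends to $0$ uniformly on $[0,1]$. Similarly, $S'(\rho)F_k(\rho)-Q_k(\rho)=S'(\rho)\rho - S(\rho)+S(0)-R_k(\rho)$, and the terms $\rho^{k+1}S'(\rho)$ cancel. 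Therefore
\[
\p_t S(\rho)+\nabla\cdot\big[(S(\rho)-S(0)-S'(\rho)\rho\, p_{k,\eps}+R_k)U\big]+\big(S'(\rho)\rho-S(\rho)+S(0)-R_k\big)\nabla\cdot U\le \eps\Delta S(\rho).
\]

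Second, we pass to the limit in the distributional sense. By Corollary~\ref{thm:conv_rho}, $S(\rho_{k,\eps})$, $S'(\rho_{k,\eps})\rho_{k,\eps}$ converge strongly in $L^1_{loc}$, and they are bounded. The product $S'(\rho_{k,\eps})\rho_{k,\eps}p_{k,\eps}$ converges weak-$*$ to $S'(\rho)\rho p$, by weak-strong convergence. By Corollary~\ref{thm:state&cont}, $\rho p=p$, so this limit is $S'(\rho)p$. The term $\eps\Delta S(\rho)$ vanishes in $\mathcal D'$, and $R_k\to0$. The constant $S(0)$ contributes $\nabla\cdot(S(0)U)-S(0)\nabla\cdot U=0$, so it disappears. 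We obtain exactly \eqref{eq:entropy_limit_gen}, with right-hand side $[S(\rho)-S'(\rho)\rho]\nabla\cdot U$.

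Third, we extend the result to Kruzkhov entropies. We approximate $|\rho-c|$ by smooth convex $S_\delta$ with $S_\delta\to|\cdot-c|$ uniformly and $S_\delta'\to\sgn(\cdot-c)$ pointwise, with values in $[-1,1]$. We pass to the limit $\delta\to0$ in \eqref{eq:entropy_limit_gen} by dominated convergence. The only delicate point is on the set $\{\rho=c\}$, where $S_\delta'(\rho)$ need not converge to $\sgn(0)=0$.

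This is where we expect the main (mild) obstacle. There the coefficients multiply $p$ and $\rho$. We choose $S_\delta$ even around $c$, so that $S_\delta'(c)=0$, which gives pointwise convergence everywhere. Then $S(\rho)-S'(\rho)\rho\to|\rho-c|-\sgn(\rho-c)\rho$.

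Finally, we use $|\rho-c|=\sgn(\rho-c)(\rho-c)$ to rewrite the right-hand side as $-c\,\sgn(\rho-c)\nabla\cdot U$. This yields \eqref{eq:entropy_lim_kruz}.
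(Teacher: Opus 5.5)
Your proof is correct and follows essentially the paper's route: rewrite $Q_k$ by an integration by parts so that the only weakly converging piece is $S'(\rho_{k,\eps})\rho_{k,\eps}\,p_{k,\eps}$, pass to the limit using weak--strong products together with $\rho p=p$, and then reach the Kruzhkov entropies through even smooth convex approximations (the paper uses $\sqrt{(\rho-c)^2+\delta^2}-\delta$). Normalizing $Q_k(0)=0$ and isolating the explicit $O(1/k)$ remainder $\int_0^\rho s^{k+1}S''(s)\,ds$ is tidier bookkeeping than the paper's arbitrary base point $c_k$, whose constant contributions cancel exactly as your $S(0)$ terms do, and it keeps the sign of the right-hand side $[S(\rho)-S'(\rho)\rho]\nabla\cdot U$ correct through the Kruzhkov step.
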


\begin{proof}
Replacing $\rho_{k,\eps}$ by $\rho$ for simplicity, and using the previous computations, we know that for every smooth convex function $S$ and every $Q_k$ such that $Q_k' = F_k' S'$, the function $Q_k$ is defined up to a constant $c_k\in[0,1]$:
\[
Q_k(\rho)
= \int_{c_k}^{\rho} S'(s)F_k'(s)\,ds
= -\underbrace{\int_{c_k}^{\rho} S''(s)F_k(s)\,ds}_{\mathrm{I}}
+\underbrace{S'(\rho)F_k(\rho)}_{\mathrm{II}}
-\underbrace{S'(c_k)F_k(c_k)}_{\mathrm{III}}.
\]

Since $\rho_{k,\eps}\to\rho$ in $C([0,T],L^1_{\mathrm{loc}})$, we have almost everywhere convergence up to a subsequence.  
As $S$ is smooth, $S(\rho_{k,\eps})\to S(\rho)$ and $S'(\rho_{k,\eps})\to S'(\rho)$ almost everywhere.  
Because $p_{k,\eps}\overset{*}{\rightharpoonup}p$, we have
\[
F_k(\rho_{k,\eps})
=\rho_{k,\eps}(1-p_{k,\eps})
\rightharpoonup \rho(1-p).
\]

For every $x_k,c_k\to x,c\in[0,1]$,
\[
-\int_{c_k}^{x_k} S''(s)F_k(s)\,ds
\longrightarrow
-\int_c^x sS''(s)\,ds
= \int_c^x S'(s)\,ds - S'(x)x + cS'(c).
\]
By almost everywhere convergence of $\rho_{k,\eps}$ and dominated convergence, term (I) converges in $\mathcal{D}'$ to
\[
S(\rho)-S(c)-S'(\rho)\rho + S'(c)c.
\]
Term (II) converges in $\mathcal{D}'$ to $S'(\rho)\rho(1-p)$.

For term (III), we may choose $c_k\to 1$ so that $F_k(c_k)\to c\alpha$ for any $\alpha\in[0,1]$. However, this refinement does not affect the final result since the corresponding terms cancel. We then obtain
\[
Q_k(\rho_{k,\eps})
\to
S(\rho)-S(c)+cS'(c)-S'(\rho)p-c\alpha S'(c)
\quad\text{in }\mathcal{D}',
\]
with $\alpha=0$ for $c<1$ and $\alpha\in[0,1]$ for $c=1$.

Passing to the limit $k\to+\infty$, $\eps\to0$, we obtain for every smooth convex $S$:
\[
\partial_t S(\rho)
+\nabla\cdot\big((S(\rho)-S(c)+cS'(c)-S'(\rho)p-c\alpha S'(c))U\big)
\le
[S(\rho)-S(c)+cS'(c)-c\alpha S'(c)-S'(\rho)\rho]\nabla\cdot U.
\]
The terms involving $\alpha$ and $c$ cancel, and we finally get
\begin{equation}
\partial_t S(\rho)
+\nabla\cdot\big((S(\rho)-S'(\rho)p)U\big)
\le
[S(\rho)-S'(\rho)\rho]\nabla\cdot U.
\end{equation}

Taking now
\[
S(\rho)=H^\delta(\rho-c)
=\sqrt{(\rho-c)^2+\delta^2}-\delta,
\]
we obtain
\[
\partial_t H^\delta(\rho)
+\nabla\cdot\big((H^\delta(\rho)-(H^\delta)'(\rho)p)U\big)
\le
[(H^\delta)'(\rho)\rho-H^\delta(\rho)]\nabla\cdot U.
\]
Passing to the limit $\delta\to0$ gives in the distributional sense:
\[
\partial_t |\rho-c|
+\nabla\cdot\big((|\rho-c|-\sgn(\rho-c)p)U\big)
\le
[\sgn(\rho-c)\rho-|\rho-c|]\nabla\cdot U,
\]
which is exactly \eqref{eq:entropy_lim_kruz}.
\end{proof}

\begin{remark}
A key point is that we first use the entropy inequalities \eqref{eq:entropy_k} for smooth entropies $S$ and pass to the limit $k\to+\infty$ before extending the result to nonsmooth entropies. In this way, we avoid difficulties related to the lack of strong compactness of $p_k$.
\end{remark}

\begin{proposition}[Rankine–Hugoniot at the limit]
\label{thm:RH_infty}
Let
\[
\rho(t,x)
=\rho^-(x,t)\mathbf{1}_{x\le x^-(t)}
+\mathbf{1}_{[x^-(t),x^+(t)]}
+\rho^+(x,t)\mathbf{1}_{x\ge x^+(t)},
\]
where $\rho^\pm$ and $x^\pm$ are smooth functions and $\rho^\pm<1$.  
Assume that $\rho$ solves in the distributional sense
\begin{equation}
\label{eq:FBF1D}
\begin{cases}
\partial_t\rho+\partial_x(\rho(1-p)U)=0,\\
(1-p)U
=U\mathbf{1}_{x\le x^-(t)}
+U(x^+(t))\mathbf{1}_{[x^-(t),x^+(t)]}
+U\mathbf{1}_{x\ge x^+(t)},
\end{cases}
\end{equation}
that is, equation \eqref{eq:FBF} in the one-dimensional setting where the saturated region is $[x^-(t),x^+(t)]$. Then
\begin{equation}
\label{eq:RHlim}
\begin{cases}
(x^+)' = U(x^+),\\[0.2cm]
(x^-)' = \displaystyle\frac{U(x^+)-\rho^-U(x^-)}{1-\rho^-}.
\end{cases}
\end{equation}
\end{proposition}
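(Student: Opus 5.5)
The plan mirrors the proof of Proposition~\ref{prop:RHk}. We compute the distributional derivatives of the piecewise smooth function $\rho$ and of the flux $\rho(1-p)U$ given by \eqref{eq:FBF1D}, and identify the Dirac masses located at $x^-(t)$ and $x^+(t)$. Because the equation holds in $\mathcal{D}'$, the singular part concentrated on each curve must vanish. Each of the two curves then yields one Rankine–Hugoniot relation.

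Write $\rho^\pm$ for the traces $\rho^\pm(t,x^\pm(t))$. The time derivative of $\rho$ is
\[
\p_t\rho=(\p_t\rho^-)\mathbf{1}_{x<x^-}+(\p_t\rho^+)\mathbf{1}_{x>x^+}+(x^-)'(1-\rho^-)\delta_{x^-(t)}+(x^+)'(\rho^+-1)\delta_{x^+(t)}.
\]
The sign of each Dirac coefficient is the jump $[\rho]$ taken across a moving interface. Next set $G=\rho(1-p)U$, which equals $\rho^-U$ on the left, $U(x^+(t))$ in the middle, and $\rho^+U$ on the right. Its spatial derivative is
\[
\p_xG=\p_x(\rho^-U)\mathbf{1}_{x<x^-}+\p_x(\rho^+U)\mathbf{1}_{x>x^+}+\big(U(x^+)-\rho^-U(x^-)\big)\delta_{x^-}+\big(\rho^+U(x^+)-U(x^+)\big)\delta_{x^+}.
\]
In the middle region $G$ is constant in $x$, so it contributes no absolutely continuous part there.

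Summing the two expressions and collecting the coefficients of the Dirac masses gives two relations. At $x^-$ we get $(x^-)'(1-\rho^-)+U(x^+)-\rho^-U(x^-)=0$, which is the second line of \eqref{eq:RHlim} because $\rho^-<1$. At $x^+$ we get $(\rho^+-1)\big((x^+)'-U(x^+)\big)=0$. Since $\rho^+<1$, this forces $(x^+)'=U(x^+)$.

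The regular parts of the equation reduce to the transport equations satisfied by $\rho^\pm$ off the saturated block. In the middle region the equation is trivially satisfied, because $\rho\equiv1$ and the flux is independent of $x$. These regular parts therefore do not interfere with the singular balance.

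The only delicate step is a matter of bookkeeping. One has to be careful with the orientation of the jumps, and with the fact that the flux inside the block is $U(x^+(t))$, not $U(x)$. One should also note explicitly that the strict inequality $\rho^\pm<1$ is what allows the division in both relations. If $\rho^+=1$ the front relation would be void, which matches the degenerate situation where two saturated blocks merge.
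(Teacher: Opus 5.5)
You follow the same route as the paper: balance the Dirac masses of $\partial_t\rho$ and of $\partial_x(\rho(1-p)U)$ on each interface. However, your formula for $\partial_t\rho$ has the wrong sign on both Dirac masses, and the relations you then state do not follow from what you wrote.

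For an interface $s(t)$ with state $a$ on the left and $b$ on the right, $\partial_t\mathbf{1}_{\{x<s(t)\}}=s'\delta_{s(t)}$ and $\partial_t\mathbf{1}_{\{x>s(t)\}}=-s'\delta_{s(t)}$. The singular part of $\partial_t\rho$ is therefore $-s'(b-a)\,\delta_s=-s'[\rho]\,\delta_s$, not $+s'[\rho]\,\delta_s$ as you assert.

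Taken at face value, your displayed formulas give the wrong conclusions at both interfaces:
\begin{itemize}
\item At $x^-$, the relation you write, $(x^-)'(1-\rho^-)+U(x^+)-\rho^-U(x^-)=0$, gives $(x^-)'=-\frac{U(x^+)-\rho^-U(x^-)}{1-\rho^-}$. This is the opposite of the second line of \eqref{eq:RHlim}, not that line.
\item At $x^+$, your time and flux terms sum to $(\rho^+-1)\bigl((x^+)'+U(x^+)\bigr)$. This forces $(x^+)'=-U(x^+)$, not the $(\rho^+-1)\bigl((x^+)'-U(x^+)\bigr)$ you state.
\end{itemize}
The case $\rho^-=0$, $U>0$ shows which sign is physical. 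The rear of an isolated block must advance at speed $U(x^+)$, whereas your balance makes it recede.

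The repair is local. The correct singular part of the time derivative is $-(x^-)'(1-\rho^-)\,\delta_{x^-}+(x^+)'(1-\rho^+)\,\delta_{x^+}$. Your flux jumps, $U(x^+)-\rho^-U(x^-)$ at $x^-$ and $(\rho^+-1)U(x^+)$ at $x^+$, are correct. Combining the two gives $(1-\rho^-)(x^-)'=U(x^+)-\rho^-U(x^-)$ and $(1-\rho^+)\bigl((x^+)'-U(x^+)\bigr)=0$. Dividing by $1-\rho^\pm>0$ yields \eqref{eq:RHlim}.

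For the record, the paper's printed computation has the $\delta_{x^-}$ terms right. Both of its $\delta_{x^+}$ coefficients have flipped signs, and these cancel. Your flux coefficient at $x^+$ is the correct one.

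The rest of your argument is fine. There is no regular contribution inside the block, because the flux there is $U(x^+(t))$ and independent of $x$, and your use of $\rho^\pm<1$ to divide is valid.
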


\begin{proof}
Computing the derivatives in \eqref{eq:FBF1D}, we obtain
\[
\partial_t\rho
=\partial_t\rho^-\mathbf{1}_{x\le x^-}
+\partial_t\rho^+\mathbf{1}_{x\ge x^+}
-(x^-)'(1-\rho^-)\delta_{x^-}
-(x^+)'(1-\rho^+)\delta_{x^+},
\]
and
\[
\partial_x(\rho U)
=\partial_x(\rho^-U)\mathbf{1}_{x\le x^-}
+\partial_x(\rho^+U)\mathbf{1}_{x\ge x^+}
+[U(x^+)-\rho^-U(x^-)]\delta_{x^-}
+(1-\rho^+)U(x^+)\delta_{x^+}.
\]
Identifying the coefficients of the Dirac masses yields \eqref{eq:RHlim}.
\end{proof}

\begin{remark}
The second equation expresses the balance between two effects: the saturated zone moves with the desired velocity of its front $U(x^+)$, while the inequality $U(x^-)>U(x^+)$ implies that mass is added at the back of the saturated region.

If $\rho^-=0$, the endpoint in the back of the saturated area moves with velocity $U(x^+)$, as expected since no mass is added, and the entire saturated region travels at speed $U(x^+)$.  

Moreover, the balance law shows that above a certain density threshold behind the saturated zone, the rear of the queue propagates backward.  

Finally, since $U(x^-)>U(x^+)$, the relation implies that $\rho^-(t,x^-(t))$ must remain strictly less than~$1$, leading to a discontinuity at the back of the queue (see Fig.~\ref{fig:shock_formation}).
\end{remark}

\begin{figure}[t]
\centering
\includegraphics[width = 0.8\linewidth]{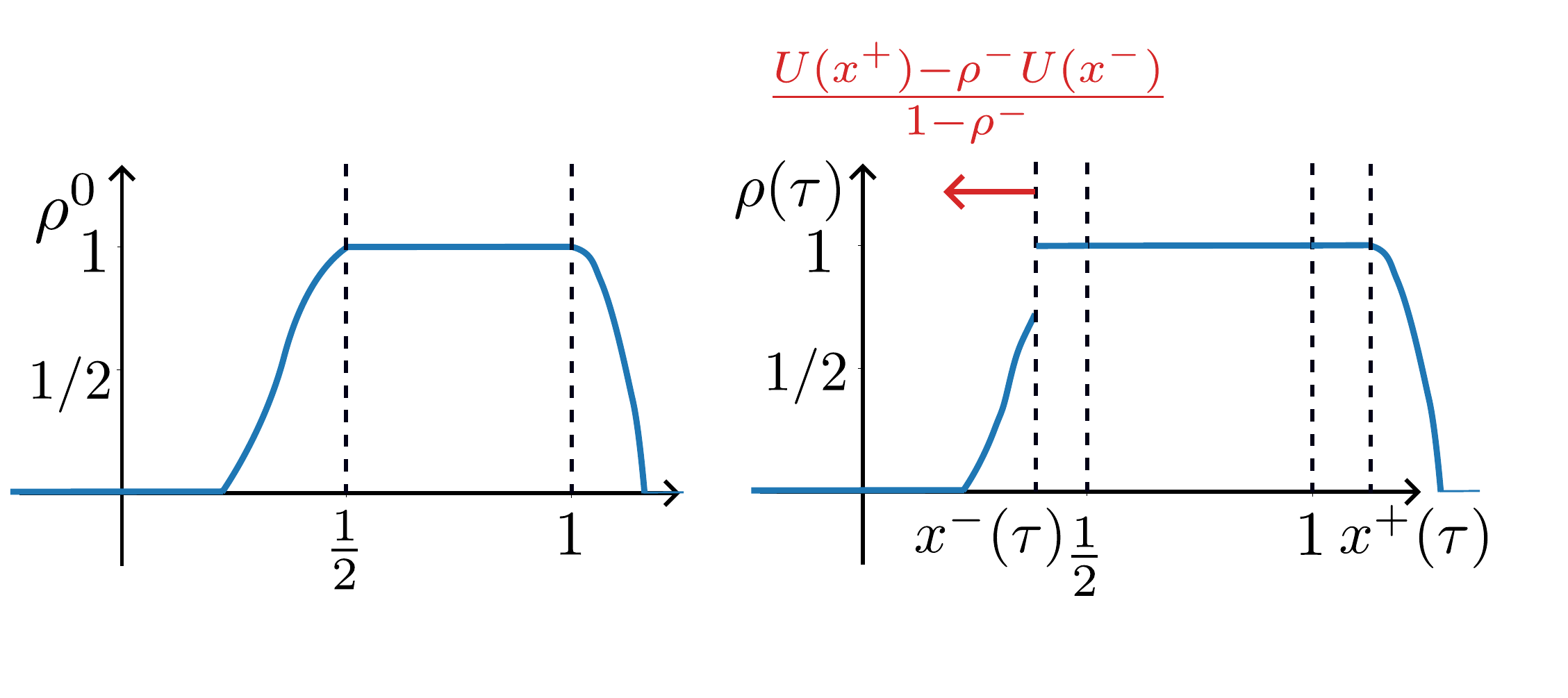}
\caption{Shock formation at the back of the saturated area.}
\label{fig:shock_formation}
\end{figure}

Let us now investigate which discontinuities satisfy both equation \eqref{eq:FBF} and the entropy inequalities \eqref{eq:entropy_lim_kruz}.  
It is not meaningful to consider a shock between two values $\rho^\pm<1$, since in that case $p=0$ and equation \eqref{eq:FBF} reduces to the standard linear transport equation, for which uniqueness follows from DiPerna–Lions theory \cite{diperna_ordinary_1989}.

\begin{proposition}[Entropic shocks at the limit]
\label{thm:entropic_shocks_infty}
Let $\rho$ be as in Proposition~\ref{thm:RH_infty}, satisfying \eqref{eq:FBF1D} in the distributional sense. Then $\rho$ also satisfies the entropy inequalities \eqref{eq:entropy_lim_kruz}.
\end{proposition}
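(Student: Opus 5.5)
The plan is to verify \eqref{eq:entropy_lim_kruz} directly for the piecewise smooth profile, for each fixed $c\in[0,1]$. As in the proofs of Propositions~\ref{prop:RHk} and \ref{thm:entropic_shocks_k}, I would decompose both sides as distributions. The absolutely continuous parts live on the three open regions $\{x<x^-\}$, $\{x^-<x<x^+\}$, $\{x>x^+\}$, and the singular parts are Dirac masses at $x^-(t)$ and $x^+(t)$. The inequality then reduces to two checks: it holds pointwise in each smooth region, and the coefficient of each Dirac mass on the left-hand side is $\le 0$. The right-hand side $-c\,\sgn(\rho-c)\partial_xU$ has no singular part.

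\emph{Smooth regions.} Where $\rho<1$, the law of state gives $p=0$, so $\rho$ solves the linear transport equation $\partial_t\rho+\partial_x(\rho U)=0$ classically. Multiplying by $\sgn(\rho-c)$ (rigorously, using $H^\delta(\rho-c)$ and letting $\delta\to0$, i.e. Kato's inequality) gives
\[
\partial_t|\rho-c|+\partial_x(|\rho-c|U)\le -c\,\sgn(\rho-c)\,\partial_xU .
\]
This is \eqref{eq:entropy_lim_kruz} with $p=0$. Inside $[x^-,x^+]$ we have $\rho=1$ and, by \eqref{eq:FBF1D}, $pU=U-U(x^+)$.
\begin{itemize}
\item For $c<1$, the flux is $(|1-c|-p)U=U(x^+)-cU$. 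Its $x$-derivative is $-c\,\partial_xU$, which equals the right-hand side, and $\partial_t|\rho-c|=0$.
\item For $c=1$, every term vanishes.
\end{itemize}

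\emph{Front $x^+$.} Both sides have $p=0$: on the left $p=1-U(x^+)/U(x^+)=0$. Hence the flux jump is $[|\rho-c|]\,U(x^+)$. Since $(x^+)'=U(x^+)$ by \eqref{eq:RHlim}, the Dirac coefficient is $-(x^+)'[|\rho-c|]+[|\rho-c|]U(x^+)=0$.

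\emph{Back $x^-$, the main step.} Write $s=(x^-)'$ and $U^\pm=U(x^\pm)$. On the right of $x^-$ the flux is $(|1-c|-\sgn(1-c)p)U^-$ with $p=1-U^+/U^-$; on the left it is $|\rho^--c|U^-$. I would split into three cases.
\begin{itemize}
\item If $c\le\rho^-$: the jump of $|\rho-c|$ is $1-\rho^-$ and the flux jump is $U^+-\rho^-U^-$. By \eqref{eq:RHlim} the coefficient is exactly $0$.
\item If $\rho^-<c<1$: the jump is $1+\rho^--2c$. Writing it as $(1-\rho^-)+2(\rho^--c)$ and using $s(1-\rho^-)=U^+-\rho^-U^-$, the coefficient should simplify to $2(\rho^--c)(U^--s)$.
\item If $c=1$: the coefficient is $(1-\rho^-)(s-U^-)$.
\end{itemize}
In the last two cases the sign is governed by
\[
s-U^-=\frac{U^+-U^-}{1-\rho^-}.
\]
This is $\le0$ precisely because $U(x^+)\le U(x^-)$ for $x^-<x^+$, which follows from $\partial_xU\le0$. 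That is the standing assumption $D^SU\le-\alpha I_d$, also underlying the remark after Proposition~\ref{thm:RH_infty}. With $\rho^--c<0$ in the middle case, both coefficients are then $\le0$.

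I expect this back-boundary computation to be the only delicate point. The algebraic identities are straightforward, but the inequality genuinely requires the monotonicity of $U$ across the saturated block, so I would state explicitly that this assumption is used there. Once it is in place, summing the regular and singular contributions gives \eqref{eq:entropy_lim_kruz} in $\mathcal D'$ for every $c\in[0,1]$.
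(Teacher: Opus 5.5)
Your proposal is correct and follows the same route as the paper. Both arguments split the entropy inequality into regular parts and Dirac masses at $x^\pm$, use \eqref{eq:RHlim} to cancel or sign the Dirac coefficients, and invoke $U(x^-)\ge U(x^+)$ at the back interface, where your coefficient $2(\rho^--c)(U^--s)$ agrees with the paper's. You are in fact slightly more complete than the paper: it treats only $\rho^-<c<1$ at $x^-$ and asserts the inequality in the smooth regions without computation, whereas you also check $c\le\rho^-$, the case $c=1$ (coefficient $U^+-U^-\le 0$), and the saturated region explicitly.
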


\begin{proof}
Away from the discontinuities at $x^-(t)$ and $x^+(t)$, the entropy inequalities \eqref{eq:entropy_lim_kruz} are automatically satisfied. We therefore focus on these two interfaces.

\medskip
\noindent
\textbf{Discontinuity at $x^-$.}
Take $\rho^-<c<1$, which is the only nontrivial case.  
From \eqref{eq:FBF1D}, we have
\[
-pU=(U(x^+)-U(x^-))\mathbf{1}_{[x^-,x^+]}.
\]
Hence
\[
\partial_x\big((|\rho-c|-\sgn(\rho-c)p)U\big)
=
\partial_x(|\rho-c|U)(\mathbf{1}_{x\le x^-}+\mathbf{1}_{x\ge x^+})
+\big[U(x^-)(1+\rho^- -2c)-(U(x^-)-U(x^+))\big]\delta_{x^-}.
\]

Proceeding as in the proof of Proposition~\ref{thm:entropic_shocks_k} for $\partial_t|\rho-c|$ and collecting the coefficient of $\delta_{x^-}$, the entropy inequality becomes
\[
(x^-)'(2c-\rho^- -1)
+(1+\rho^- -2c)U(x^-)
-(U(x^-)-U(x^+))
\le 0.
\]
Using Proposition~\ref{thm:RH_infty},
\[
(x^-)'=\frac{U(x^+)-\rho^-U(x^-)}{1-\rho^-},
\]
so the inequality is equivalent to
\[
\frac{U(x^+)-\rho^-U(x^-)}{1-\rho^-}(2c-\rho^- -1)
+(1+\rho^- -2c)U(x^-)
-(U(x^-)-U(x^+))
\le 0.
\]
After rearrangement,
\[
(1+\rho^- -2c)\frac{U(x^-)-U(x^+)}{1-\rho^-}
\le
U(x^-)-U(x^+).
\]
Since $U$ is decreasing, $U(x^-)-U(x^+)\ge0$, and the entropy condition reduces to
\[
\frac{1+\rho^- -2c}{1-\rho^-}\le1,
\qquad \forall\,\rho^-<c<1,
\]
which clearly holds.

\medskip
\noindent
\textbf{Discontinuity at $x^+$.}
Take $\rho^+<c<1$.  
The key observation is that $p(t,x^+(t))=0$, so $\partial_x(\sgn(\rho-c)pU)$ produces no Dirac mass at $x^+$.  
Repeating the same computations and using $(x^+)'=U(x^+)$, the entropy inequality reduces to
\[
U(x^+)(2c-\rho^+ -1)
+(1+\rho^+ -2c)U(x^+)
\le 0,
\]
which is trivially satisfied.
\end{proof}

\begin{remark}
The previous proposition shows that both \emph{jump-ups} to $1$ and \emph{jump-downs} from $1$ are admissible under the entropy inequalities.  

For jump-ups this is expected, since \eqref{eq:entropy_lim_kruz} was constructed precisely to capture limits of entropy solutions of \eqref{eq:CL}, and the finite-$k$ entropy inequalities \eqref{eq:entropy_k} allow such discontinuities.  

The admissibility of jump-downs is more surprising and reflects the qualitative behavior illustrated in Fig.~\ref{fig:rarefaction}: for finite $k$, a jump-down generates a rarefaction wave whose backward propagation speed diverges as $k\to+\infty$.
\end{remark}

\begin{remark}
This also highlights that equation \eqref{eq:FBF} behaves quite differently from a classical conservation law. One might be tempted to interpret \eqref{eq:FBF} as a conservation law with a flux that is linear for $\rho\in[0,1[$ and highly nonlinear at $\rho=1$. However, in standard conservation laws, nonuniqueness arises precisely from flux nonlinearity. One could therefore expect that a weak formulation of \eqref{eq:FBF1D} would not suffice to ensure uniqueness, and that additional differential inequalities would be needed to single out the solution arising as the limit of entropy solutions to \eqref{eq:CL}.
\end{remark}

\section*{Acknowledgements}
The author FS is supported by the ERC AdG 101054420 EYAWKAJKOS. NM and FS also warmly acknowledge the support of the Lagrange Mathematics and Computation Research Center which hosted important discussions on this topic. The authors thank Bertrand Maury for the idea that originated the project, and also Frédéric Lagoutière from ICJ in Lyon for the numerics.
\newpage

\bibliographystyle{siam}
\bibliography{biblio}

\begin{figure}[t]
\centering

\begin{subfigure}{0.45\textwidth}
\includegraphics[width=0.9\linewidth]{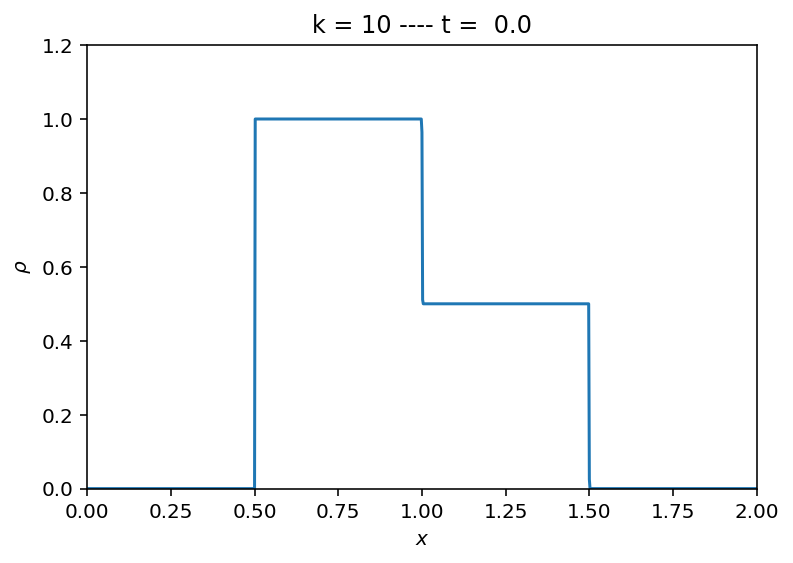}
\end{subfigure}

\medskip

\begin{subfigure}{0.45\textwidth}
\includegraphics[width=0.9\linewidth]{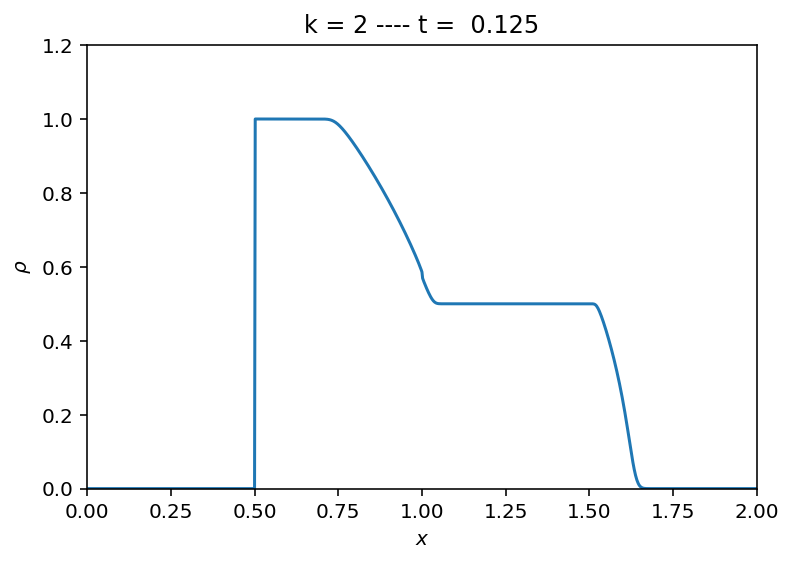}
\end{subfigure}
\hfill
\begin{subfigure}{0.45\textwidth}
\includegraphics[width=0.9\linewidth]{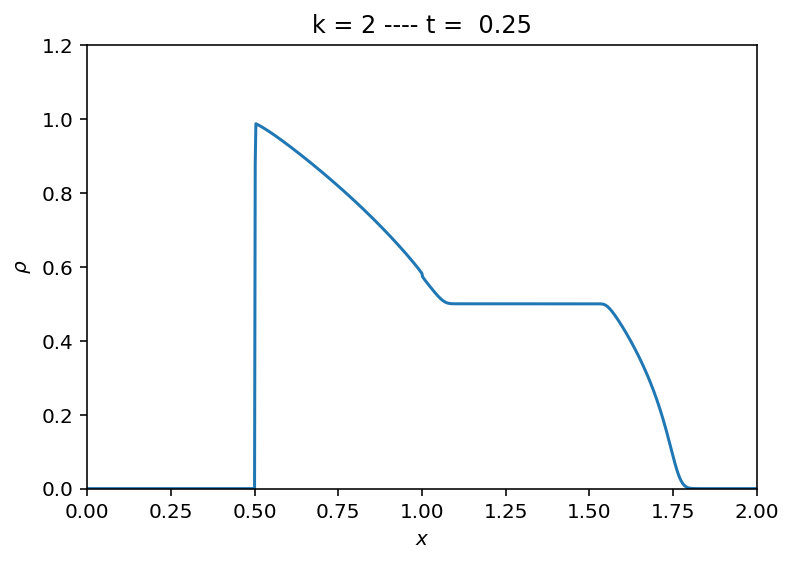}
\end{subfigure}

\medskip

\begin{subfigure}{0.45\textwidth}
\includegraphics[width=0.9\linewidth]{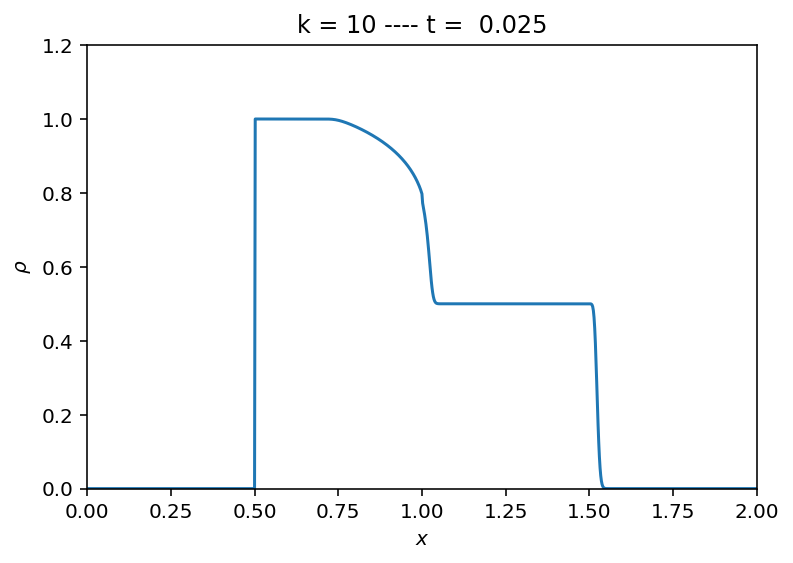}
\end{subfigure}
\hfill
\begin{subfigure}{0.45\textwidth}
\includegraphics[width=0.9\linewidth]{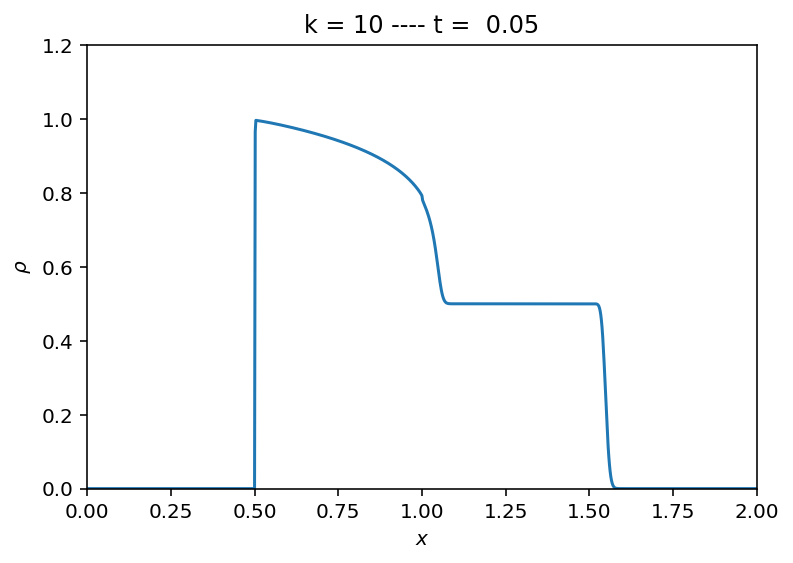}
\end{subfigure}

\medskip

\begin{subfigure}{0.45\textwidth}
\includegraphics[width=0.9\linewidth]{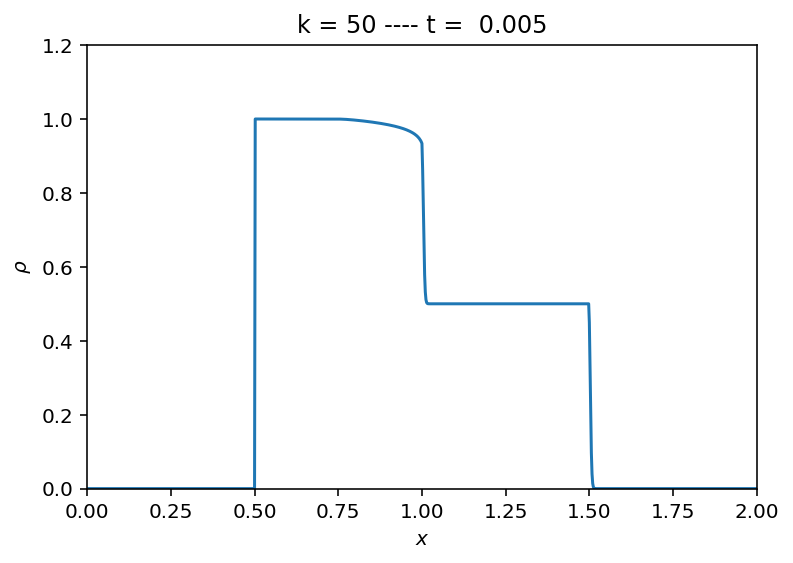}
\end{subfigure}
\hfill
\begin{subfigure}{0.45\textwidth}
\includegraphics[width=0.9\linewidth]{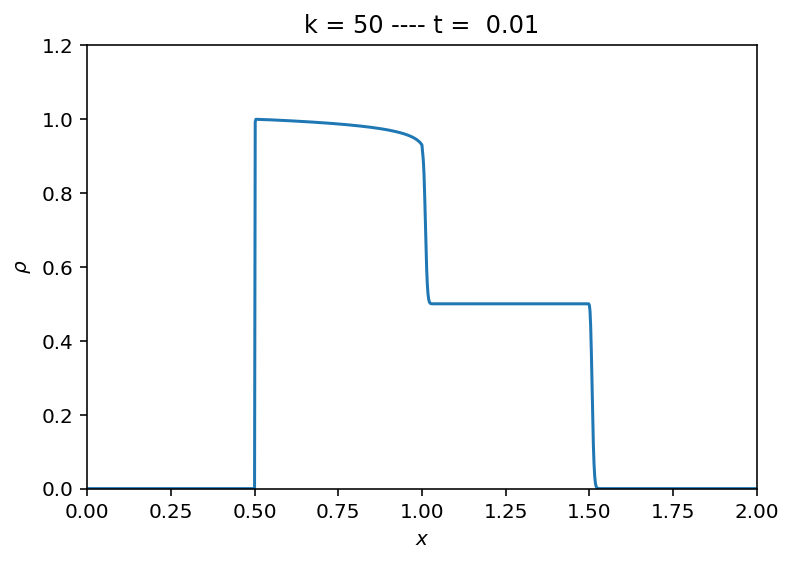}
\end{subfigure}

\caption{Numerical simulations with different values of $k$.}
\label{fig:CL1DkComparison}
\end{figure}

\newpage

\begin{figure}[t]
\centering
\begin{subfigure}{0.32\textwidth}
\includegraphics[width=\linewidth]{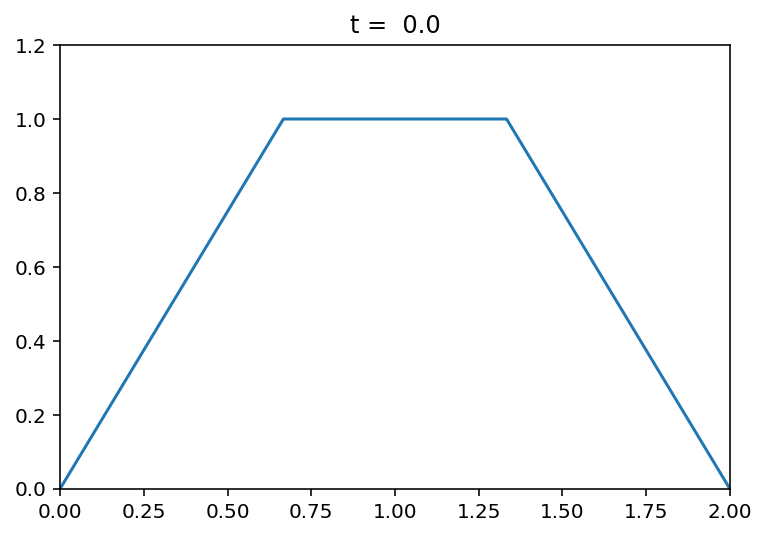}
\end{subfigure}
\hfill
\begin{subfigure}{0.32\textwidth}
\includegraphics[width=\linewidth]{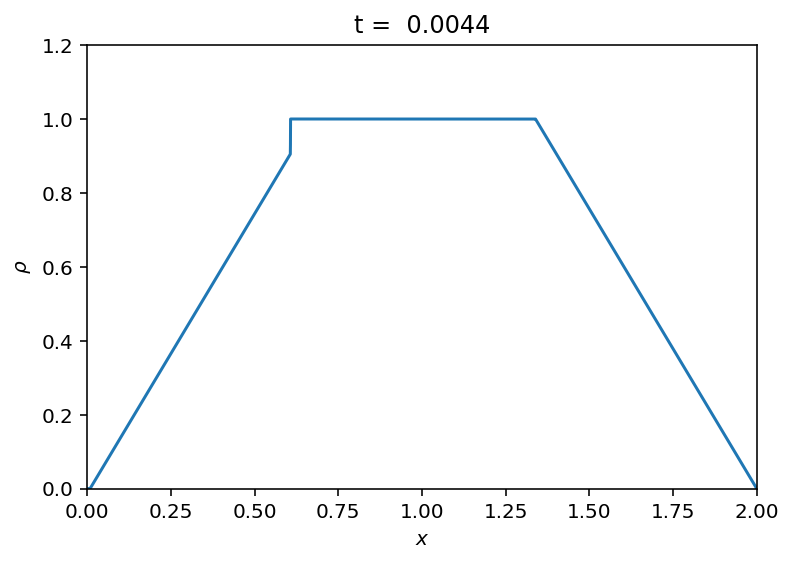}
\end{subfigure}
\hfill
\begin{subfigure}{0.32\textwidth}
\includegraphics[width=\linewidth]{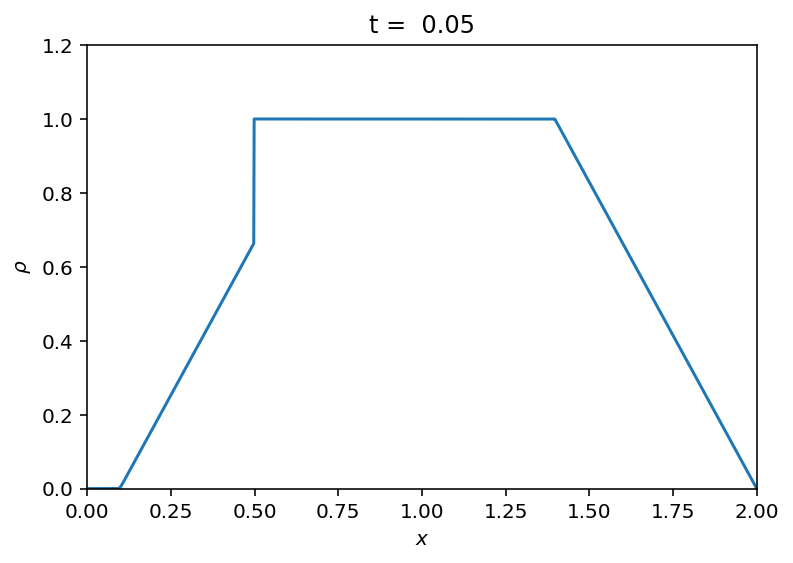}
\end{subfigure}
\caption{Shock formation for infinite $k$, numerical simulation.}
\end{figure}

\begin{figure}[t]
\centering

\begin{subfigure}{0.45\textwidth}
\includegraphics[width=\linewidth]{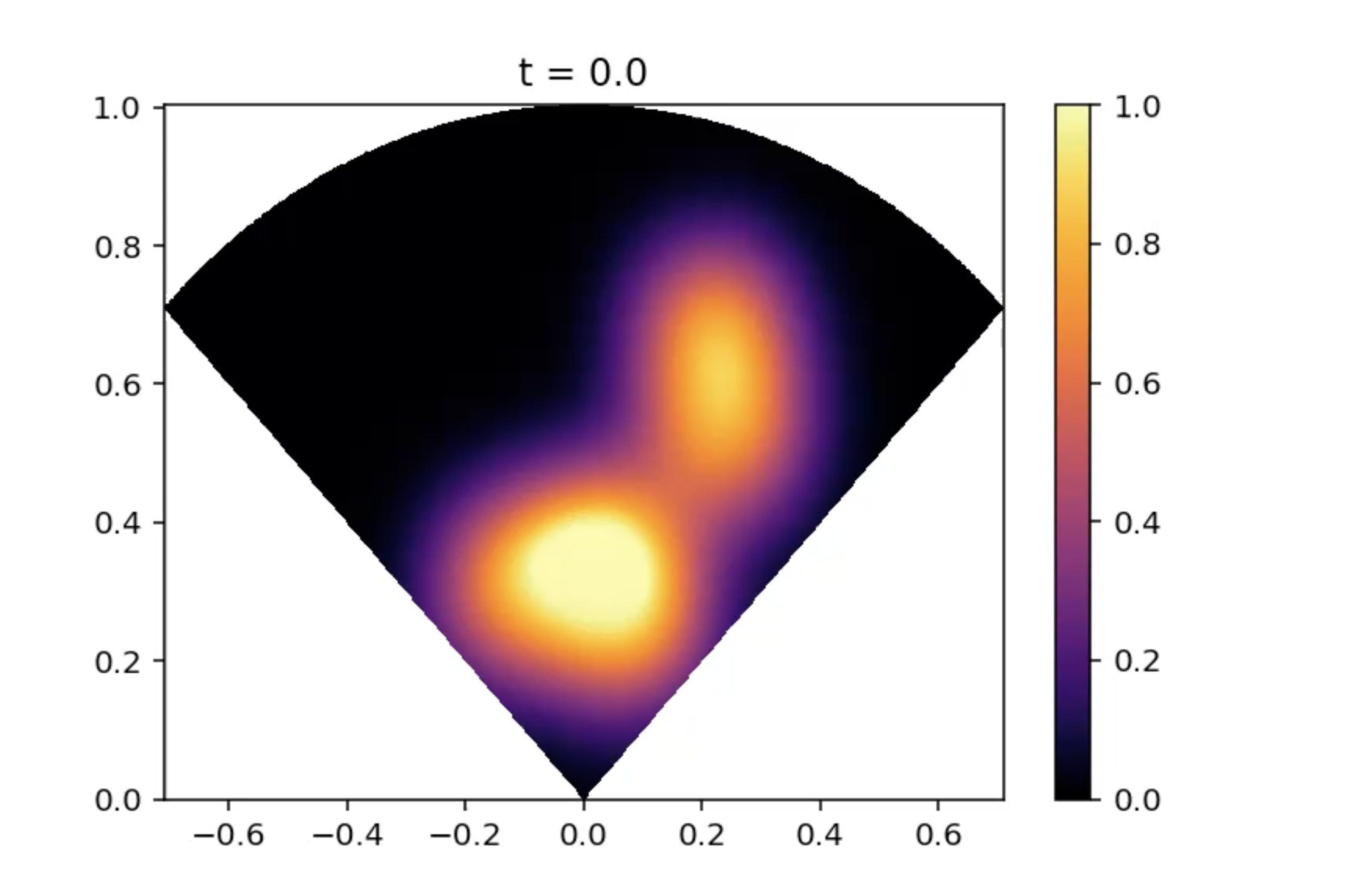}
\end{subfigure}
\hfill
\begin{subfigure}{0.45\textwidth}
\includegraphics[width=\linewidth]{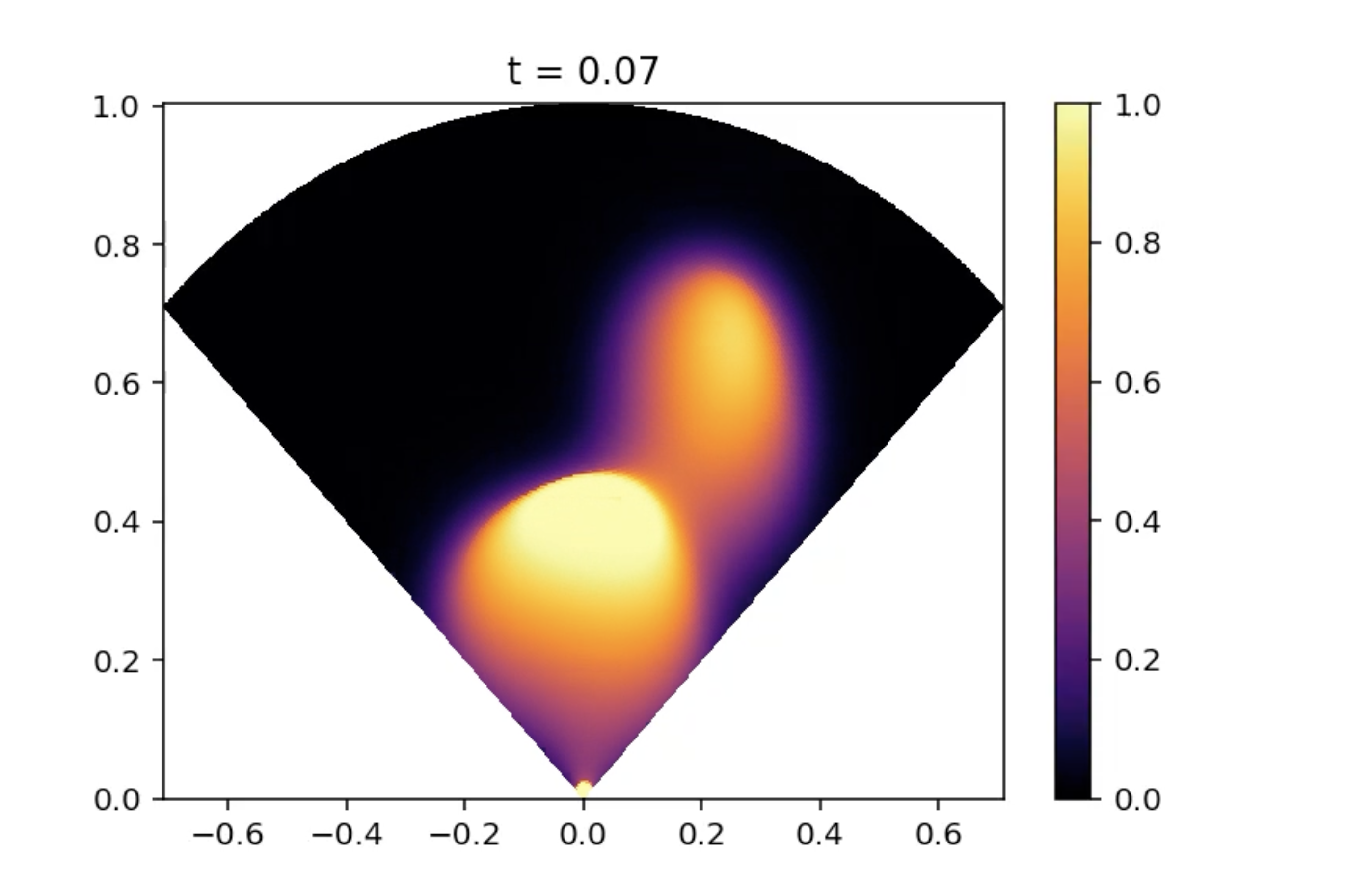}
\end{subfigure}

\medskip

\begin{subfigure}{0.45\textwidth}
\includegraphics[width=\linewidth]{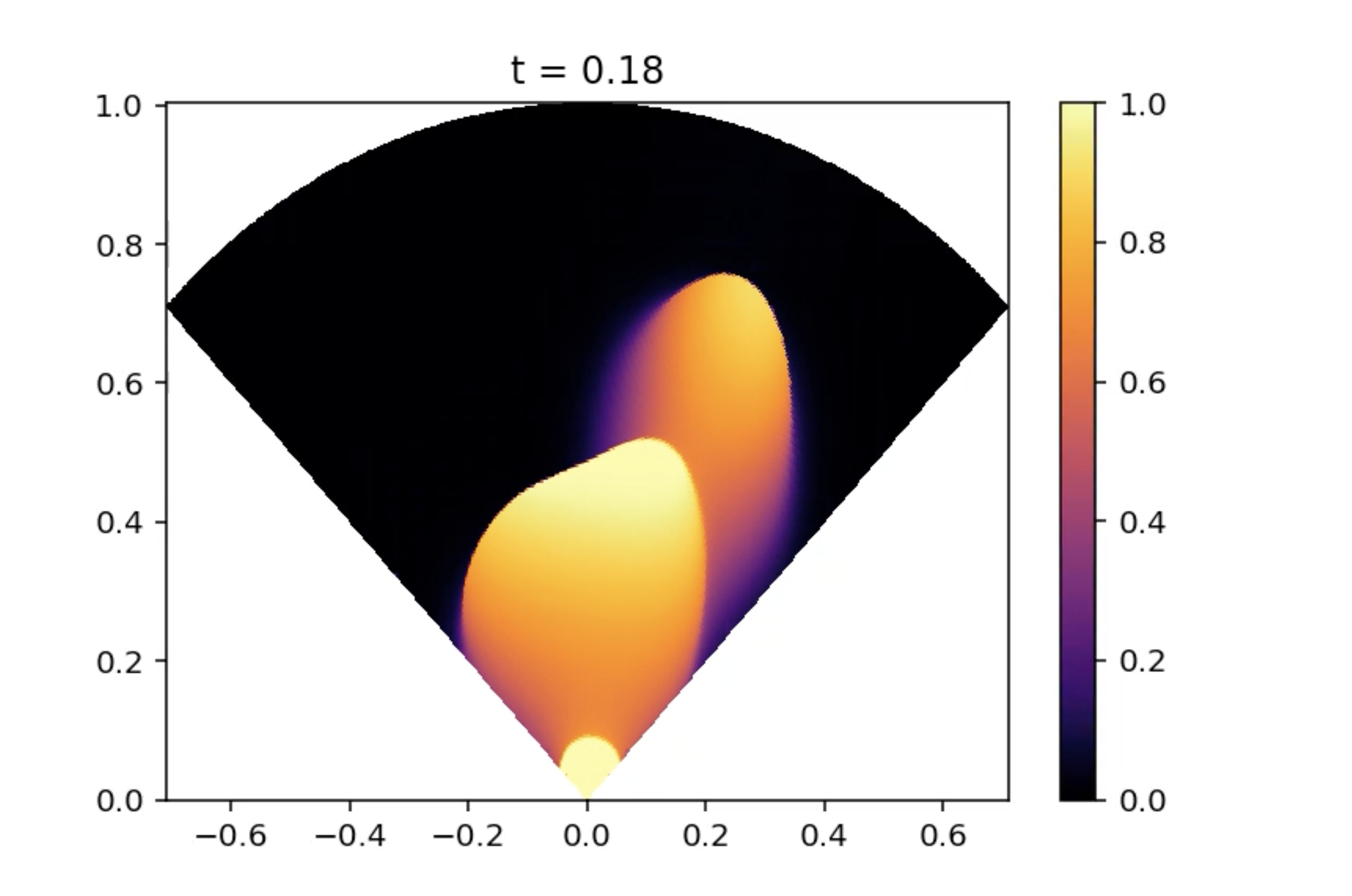}
\end{subfigure}
\hfill
\begin{subfigure}{0.45\textwidth}
\includegraphics[width=\linewidth]{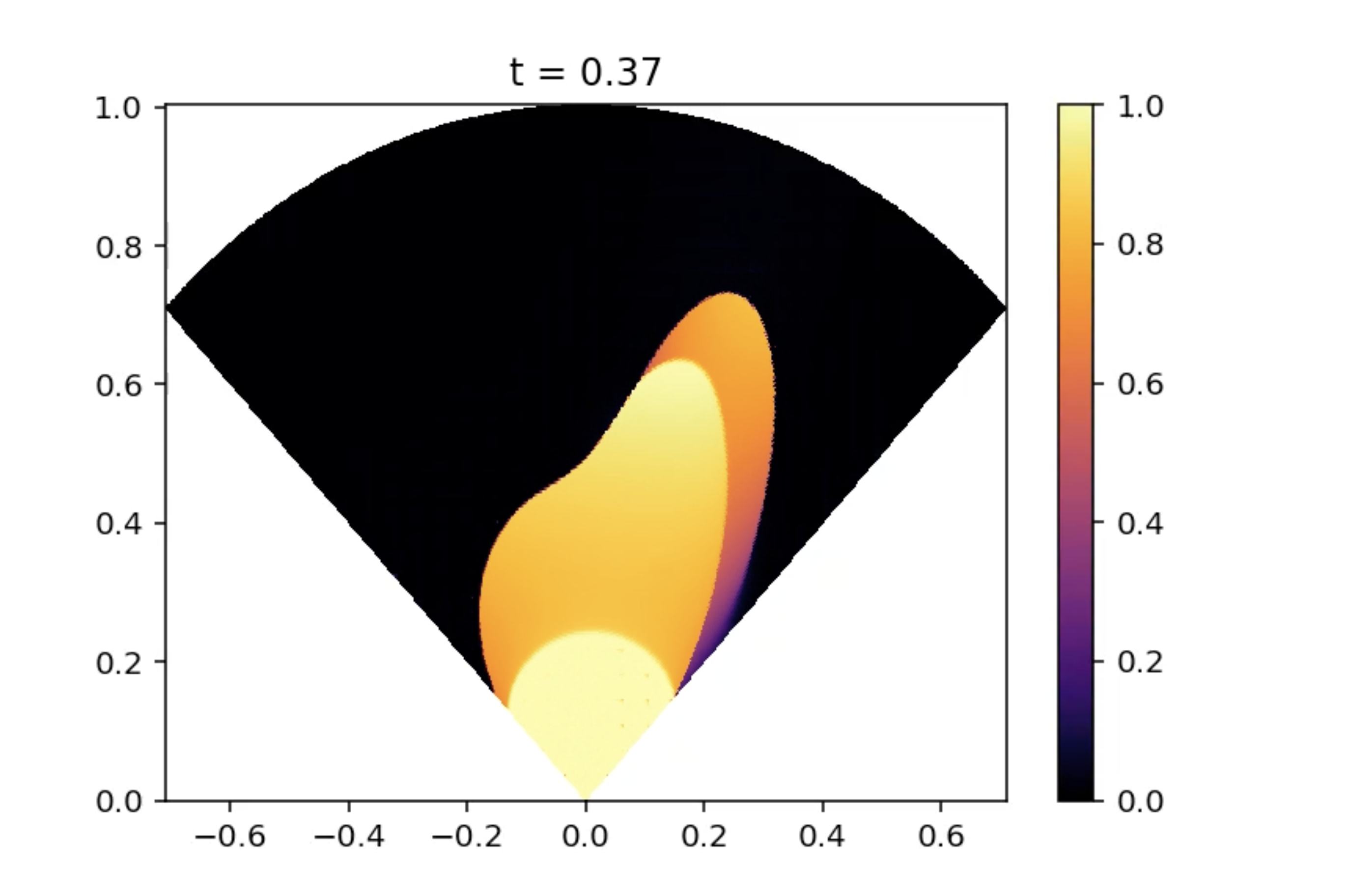}
\end{subfigure}

\medskip

\begin{subfigure}{0.45\textwidth}
\includegraphics[width=\linewidth]{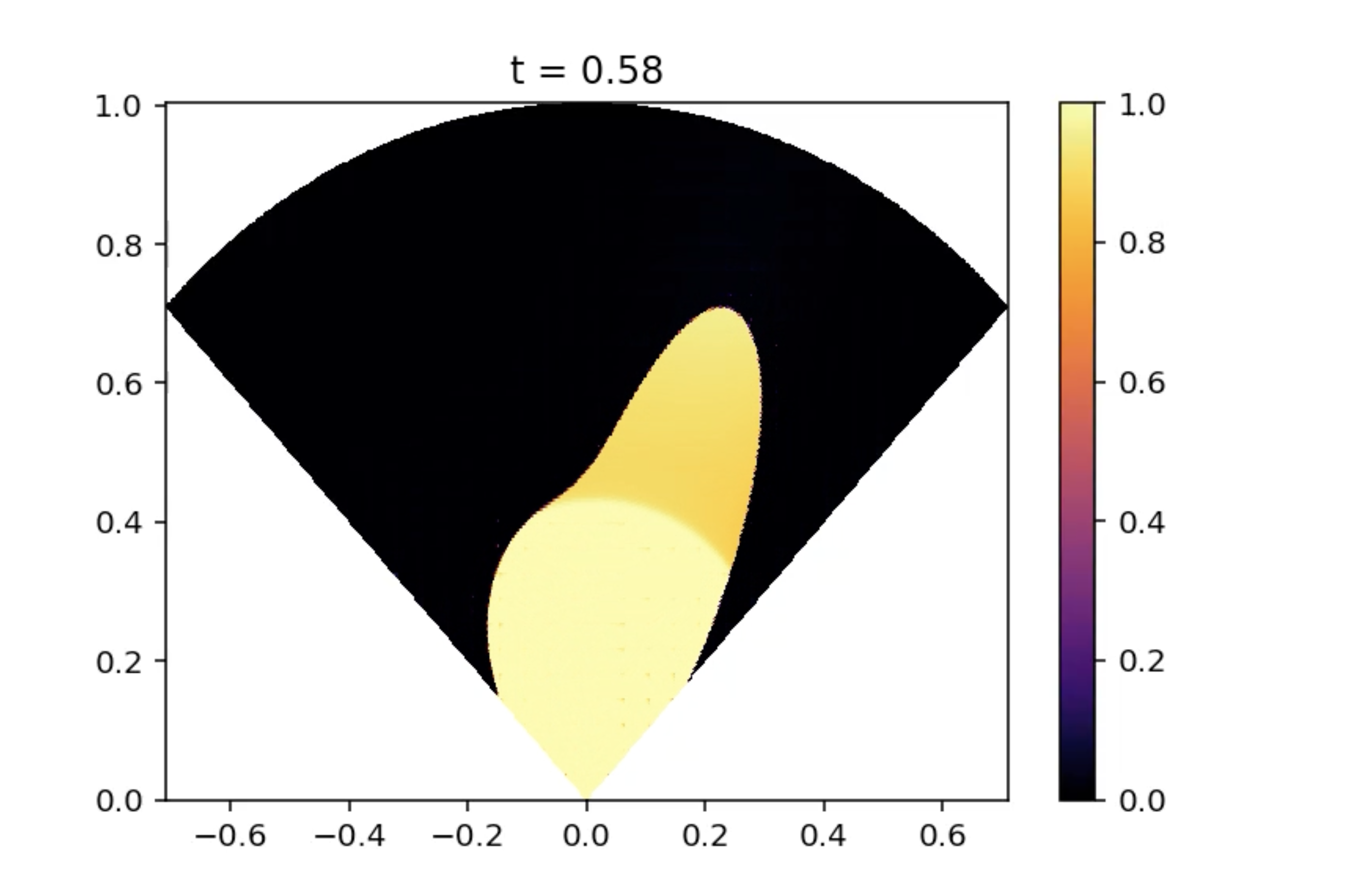}
\end{subfigure}
\hfill
\begin{subfigure}{0.45\textwidth}
\includegraphics[width=\linewidth]{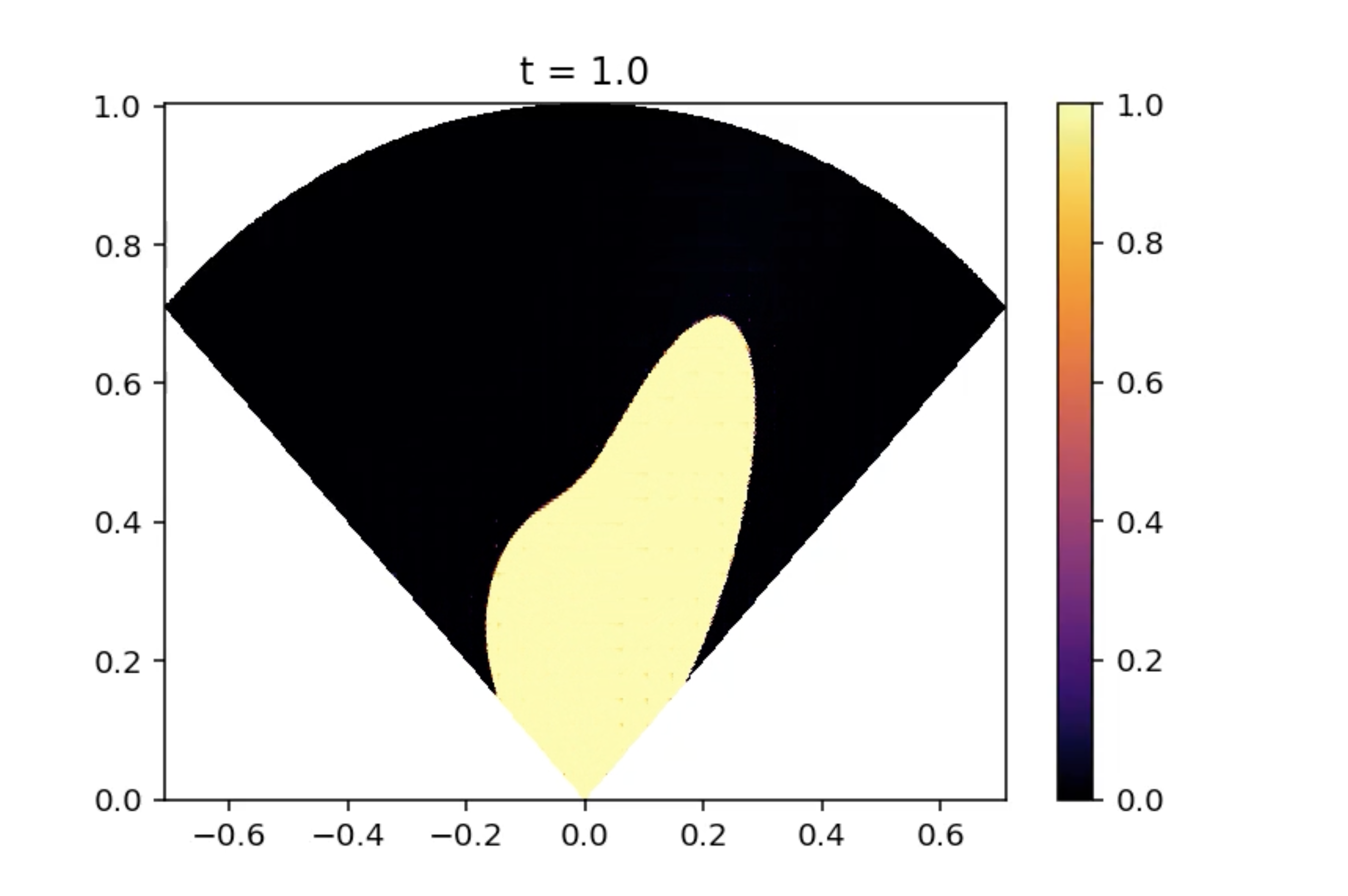}
\end{subfigure}
\caption{Numerical simulations in dimension $2$ for $k=1$.}
\label{fig:CL2Dk1_evol}
\end{figure}

\end{document}